\newtheorem{thm}{Theorem}[section]
\newtheorem{lem}[thm]{Lemma}
\newtheorem{prop}[thm]{Proposition}
\newtheorem{cor}[thm]{Corollary}
\theoremstyle{definition}
\newtheorem{defn}[thm]{Definition}
\newtheorem{rmk}[thm]{Remark}
\newcommand{\ch}{\operatorname{ch}}
\newcommand{\ind}{\operatorname{ind}}
\newcommand{\Endo}{\operatorname{End}}
\newcommand{\End}{\operatorname{End}}
\newcommand{\supp}{\operatorname{supp}}
\newcommand{\Diff}{\operatorname{Diff}}
\newcommand{\psos}{pseudodifferential operators}
\newcommand{\psimult}{\Psi^0_{mult}}
\newcommand{\pa}{\partial}
\newcommand{\psia}{\Psi^0_{a}}
\newcommand{\into}{\longrightarrow}
\renewcommand{\hat}{\widehat}
\renewcommand{\tilde}{\widetilde}
\newcommand{\NN}{\mathbb{N}}
\newcommand{\ZZ}{\mathbb{Z}}
\newcommand{\RR}{\mathbb{R}}
\newcommand{\CC}{\mathbb{C}}
\def\maB{\mathcal{B}} 
\def\maC{\mathcal{C}} 
\def\maD{\mathcal{D}}            
\def\maF{\mathcal{F}} 
\def\maG{\mathcal{G}}
\def\maK{\mathcal{K}}
\def\maR{\mathcal{R}}
\def\maV{\mathcal{V}} 
\def\maW{\mathcal{W}}
\newcommand\Dira{\hspace*{.3mm} / \hspace*{-2.5mm}D}
\newcommand\Dir{/\hspace*{-2.8mm}D}
\newcommand\vol{\operatorname{vol}}
\newcommand\CIc{\mathcal{C}^\infty_{c}}
\newcommand\CI{\mathcal{C}^\infty}
\newcommand\ie{{\em i.e., }}
\title{An index formula for perturbed Dirac operators on Lie manifolds}
\begin{document}

\author[C. Carvalho]{Catarina Carvalho}
       \address{Departamento de Matem\'atica, Instituto Superior
         T\'ecnico, UTL, Lisbon, Portugal}
       \email{ccarv@math.ist.utl.pt}

\author[V. Nistor]{Victor Nistor} \address{Pennsylvania State
       University, Math. Dept., University Park, PA 16802, USA, and
        Inst. Math. Romanian Acad.  PO BOX 1-764, 014700 Bucharest
        Romania} \email{nistor@math.psu.edu}

\date\today

\thanks{Carvalho partialy supported by Funda\c{c}\~{a}o para a Ci\^{e}ncia e 
Tecnologia through the grant FCT POCI/MAT/55958/2004.  Carvalho's
manuscripts are available from {\bf
http:{\scriptsize//}www.math.ist.utl.pt/~ccarv}. Nistor was partially
supported by the NSF Grants DMS-0713743, OCI-0749202, and
DMS-1016556. Manuscripts available from {\bf
http:{\scriptsize//}www.math.psu.edu{\scriptsize/}nistor{\scriptsize/}}.}

\thanks{\textit{2010 Mathematics Subject Classification}: 
Primary 58J20, 19K56  Secondary 58H05, 46L80. 
\textit{Keywords and phrases}: Perturbed Dirac and 
Callias type operators, Lie manifolds, Fredholm index, Atiyah-Singer
index theorem, Pseudodifferential operators on groupoids, Weighted
Sobolev spaces.}




\begin{abstract} 
We give an index formula for a class of Dirac operators coupled with
unbounded potentials.  More precisely, we study operators of the form
$P := \Dira + V$, where $\Dira$ is a Dirac operators and $V$ is an
unbounded potential at infinity on a possibly non-compact manifold
$M_0$.  We assume that $M_0$ is a Lie manifold with compactification
denoted $M$. Examples of Lie manifolds are provided by asymptotically
Euclidean or asymptotically hyperbolic spaces. The potential $V$ is
required to be such that $V$ is invertible outside a compact set $K$
and $V^{-1}$ extends to a smooth function on $M \smallsetminus K$ that
vanishes on all faces of $M$ in a controlled way. Using tools from
analysis on non-compact Riemannian manifolds, we show that the
computation of the index of $P$ reduces to the computation of the
index of an elliptic pseudodifferential operator of order zero on
$M_0$ that is a multiplication operator at infinity. The index formula
for $P$ can then be obtained from the results of \cite{carvalho}. The
proof also yields similar index formulas for Dirac operators coupled
with bounded potentials that are invertible at infinity on
asymptotically commutative Lie manifolds, a class of manifolds that
includes the scattering and double-edge calculi.
\end{abstract}

\maketitle

\tableofcontents

\section*{Introduction}\label{s int}

Perturbed Dirac operators $\Dir + V$ and operators $\Delta + V$ of
Schr\"{o}dinger type on non-compact manifolds play an important role
in Quantum Mechanics, Conformal Field Theory, and in other
areas. Partly for this reason, the index theory for this kind of
operators has been the subject of extensive research \cite{Anghel1,
Anghel2, BottSee, BM, bunke, Callias, FoxHaskell1, FH2, FH3, Kottke, rade}.

The purpose of this paper is to give an index formula for Dirac
operators operators coupled with unbounded potentials on
even-dimensional Lie manifolds, a class of non-compact manifolds $M_0$
whose structure at infinity is controlled by a Lie algebra of vector
fields tangent to the boundary of a suitable (given) compactification
$M$. We also find an index formula for operators coupled with bounded
potentials on a subclass of Lie manifolds that are commutative at
infinity (see Definition \ref{def.com.inf}).

Lie manifolds, or manifolds with a Lie structure at infinity, were
introduced and studied in \cite{ALNgeo}. There is a natural algebra of
differential operators associated to any such manifold that contains
all the classical geometric operators, such as the the Dirac
operator \cite{aln2}. One also defines a suitable algebra of \psos\ on
any Lie manifold \cite{ALNpdo}, which happens to be related to an
algebra of \psos\ on a differentiable groupoid. For many of these
algebras $\Psi^*$ of \psos\ on manifolds with corners, the
Fredholmness of $P \in M_n(\Psi^*)$ can be characterised by the
invertibility of a symbol class that consists of two components: the
principal symbol $\sigma_{0}(P)$ and a symbol at the boundary
$\sigma_{\pa}(P)$, also called the {\em indicial operator} associated
to $P$. Thus a pseudodifferential operator compatible with the Lie
manifold structure is Fredholm if, and only if, the following two
conditions are satisfied: the usual ellipticity and the invertibility
in the so-called indicial algebra at the boundary. The Fredholm
conditions relevant for our case are discussed in
Propositions \ref{prop.fredholm.lie} and
\ref{prop.Fred.comm}.  

Let $M_0$ be an even-dimensional Riemannian Lie manifold, with
compactification to a manifold with corners $M$ and $\maV$ be the
Lie algebra of vector fields tangent to the faces of $M$ and defining
the structure at infinity of $M_0$ (for precise definitions
see \S.\ref{s lie mfld}). Let $W$ be a {Clifford module} over $M$
endowed with an admissible connection and let
$\Dir: \maC^\infty(M;W) \to \maC^\infty(M;W)$ be the associated
generalized Dirac operator.  Let us denote by $\{x_k\}$ the boundary
defining functions of the hyperfaces of $M$. We shall consider
operators of the form
\begin{equation}
\label{eq.Dirac}
    {T'} = \Dir + V := {\Dir} \hat{\otimes} 1 + 1 \hat{\otimes}
    V: \maC^\infty_c(M_0;W\otimes E) \to \maC_c^\infty(M_0;W\otimes
    E),
\end{equation}
where the potential $V\in \End(E)$ is of the form $V=f^{-1}V_0$ with 
\begin{equation}\label{eq.def.f}
        f := \Pi x_k^{a_k}, \quad  a_k \in \ZZ,\ a_k > 0,
\end{equation}
and $V_0$ smooth on $M$ and invertible at infinity (that is, on $\pa
M$). We prove that $T'$ is essentially self-adjoint acting on
$L^2(M_0; W \otimes E)$. We shall denote by $T$ the closure of $T'$, which is hence a self-adjoint operator
(odd with respect to the natural spinor grading). Let $\maD(T)$ denote
the domain of $T$ and $\maD(T) = \maD(T)_+ \oplus \maD(T)_-$ be its
grading. We shall still write $T = \Dir + V$, for simplicity. Let $ W \hat \otimes E$ be the tensor product 
$W \otimes E$ endowed with the usual grading.

Our main result, Theorem \ref{thm.u.potential}, is an index formula
for the chiral operator
\begin{equation*}
        T_+ : \maD(T)_+ \to L^2(M_0; W \hat \otimes E)_-
\end{equation*}
similar to the usual Atiyah-Singer index formula. The proof of this
theorem is obtained from a sequence of reductions, ultimately reducing
our main result to the Atiyah-Singer type theorem for operators that
are asymptotically multiplication at infinity \cite{carvalho}.  Let us
mention that our Theorem \ref{thm.u.potential} is about as general as
one can hope for such that a classical index formula would still
apply. For instance, if one replaces $V$ with a {\em bounded}
potential $V_0$, then one expects an index formula for $\Dir + V_0$ to
involve non-local invariants similar to the eta invariant \cite{APS1}.
These non-local invariants would be associated to the faces at
infinity. Thus, if one wants to avoid non-local invariants and have an
index formula on an arbitrary Lie manifold just in terms of classical
Chern characters, then one needs to require $V$ to be unbounded at
infinity. (Note, however, that on asymptotically commutative Lie
manifolds, Definition \ref{def.com.inf}, we do allow bounded
potentials, and the calculation in this case is an important
ingredient in the proof; see below for more details.) Moreover,
imposing some structure at infinity also seems to be necessary and is
usually done in practice. This justifies why we consider Lie manifolds
and not more general non-compact manifolds. See \cite{bunke} and
\cite{Kottke} for some related approaches.

Most of the known results on the index of perturbed Dirac operators on
non-compact manifolds cited above make use of crucial properties of
Dirac operators, namely relative index theorems, trace formulas, or
boundary conditions. In this paper, our index formula for $\Dir + V$,
with $V$ bounded, is obtained from a general index theorem for a
suitable class of pseudodifferential operators and in fact most of our
results hold in the setting of pseudodifferential operators. For
bounded potentials, however, we need to assume that our Lie manifold
is asymptotically commutative (or commutative at infinity),
Definition \ref{def.com.inf}. A similar approach, in the bounded
potentials case, is contained in \cite{Kottke}, for odd-dimensional
manifolds, where Melrose's index formula for (families of) scattering
operators is used to derive an index theorem for perturbed
pseudodifferential operators, so-called Callias-type operators, with
{bounded} potentials. It is shown there that the index can be computed
from invariants at the corner $S_{\pa M}^*M$.  Note that the
scattering structure is just a particular case of the asymptotically
commutative Lie structures we consider here. (It is easily seen that
Theorem \ref{ind.thm.psia} extends to the case of families, so our
results can also be formulated in this setting.)  To get the result
for unbounded potentials, we need harder results from analysis, so we
stick to differential operators, but this results holds for arbitrary
Lie manifolds.

Let us now review the sequence of reductions that lead to Theorem
\ref{thm.u.potential}. At the same time, we will review the contents of the
paper (but in the inverse order of the sections). The first step is to
write
\begin{equation*}
  T = \Dir + V = f^{-1/2} Q f^{-1/2}, \quad \mbox { with } Q :=
  f^{1/2} \Dir f^{1/2} + V_0,
\end{equation*}
which we show in Section \ref{sec3} to have the same index as 
$T$. We then consider a new Lie manifold structure
$(M, \maW)$ on $M_0$ using
\begin{equation}\label{eq.def.W}
        \maW = f\maV := (\Pi x_k^{a_k})\maV, 
\end{equation}
It turns out that $Q \in \Diff_{\maW}(M)$, which justifies the
introduction of the new Lie manifold structure $(M, \maW)$. Moreover,
$Q$ itself is a Dirac operator coupled with the {\em bounded}
potential $V_0$. What makes the index of such an operator computable
is the fact that the structural Lie algebra of vector fields $\maW$
defining $Q$ is commutative at infinity, or, to put this in another
way, the indicial algebra of $\maW$ is commutative.  A Lie manifold
with this property will be called {\em asymptotically
commutative}. The analysis on the new Lie $(M,\maW)$ manifold turns
out to be much easier. The index of the operator $Q = \Dir + V_0$
associated to (general) asymptotically commutative Lie manifolds with
$V_0$ bounded, but invertible at infinity, is obtained in
Theorem \ref{thm.b.potential} using results of Section \ref{sec2}.

The analytical properties of general, not necessarily
even-dimensional, asymptotically commutative Lie manifolds $(M, \maW)$
and the index of operators on these spaces are studied in
Section \ref{sec2}. We show that fully elliptic operators in
$\Psi^*(M; \maW)$ can be deformed continuously to operators in
$\Psi^*(M; \maW)$ that are {\em asymptotically multiplication} on
$M_0$. We thus obtain an index theorem for fully elliptic
pseudodifferential operators on general asymptotically commutative Lie
manifolds (that is, of the form $(M, \maW)$ with $\maW$ commutative at
infinity), Theorem \ref{ind thm lie mflds}, generalizing known results
for the scattering and double edge operators \cite{LMor2,
MelroseScattering}. Let us also mention that the case of
asymptotically commutative Lie manifolds includes the important case
of asymptotically Euclidean manifolds. The index formula for fully
elliptic operators on asymptotically commutative Lie manifolds follows
then from the results of \cite{carvalho}. This reduction is achieved
in Section \ref{sec1}. We remark that all the results in
Sections \ref{sec1} and \ref{sec2} do not assume that our manifolds
are even-dimensional.

It is a classical result that on a compact manifold $M_1$, a pseudodifferential operator $P$ of
order $m$ defines a Fredholm operator $H^s(M_1) \to H^{s-m}(M_1)$ if,
and only if, it is elliptic.  In other words, on a compact manifold,
ellipticity is equivalent to Fredolmness. By contrast, on non-compact
manifolds, ellipticity is typically only a necessary, but not
sufficient condition to ensure Fredholmness; stronger conditions on an
operator $P$ are required to obtain that $P$ is
Fredholm. For example, on an asymptotically commutative Lie manifold,
the Fredholm condition is still controlled by the invertibility of a
function, which this time is an extension of the principal
symbol, and hence is defined on an extension of the cosphere bundle. This
phenomenon is studied in Section \ref{sec1}, where an index theorem is
proved for such operators by reducing to the case of operators that
are multiplication at infinity (which was studied in \cite{carvalho}).
In particular, we obtain in that section an index theorem for
asymptotically multiplication operators.

We shall assume throughout most of this paper that $M_0$ is a
non-compact Lie manifold with compactification $M$, althought some of
our results of the earlier sections may be true for more general
non-compact manifolds. For instance, the index theorem
of \cite{carvalho} is valid without any assumption on $M_0$.

\subsection*{Acknowledgements} We thank Max Planck Institute for
Mathematics for support while parts of this work were being completed.
We also thank Bernd Ammann and Ulrich Bunke for useful discussions.

\section{Asymptotically multiplication operators}\label{sec1}

In this section, we review some basic concepts and results to be used
in what follows, leading to the Atiyah-Singer index theorem in the
setting of \emph{non-compact} manifolds and operators that are
multiplication outside a compact set (or asymptotically so). Here, we keep
the manifolds quite general, while we consider a class of operators
that inherits naturally the properties of the compact manifold case.
For simplicity, we assume  that $M_0$ is endowed with a metric 
$g$ and that, as a topological space, it is the interior of a compact
manifolds with corners $M$ such that $TM$ restricts to $TM_0$ on $M_0$. We let $n$ be the dimension on $M_0$, which in this and the following section may be arbitrary, but in the last section will be assumed to be even.

\subsection{General calculus} \label{ss pso ac}
We consider for now a smooth manifold $M_0$ without boundary, not
necessarily compact, and a smooth vector bundle $E$ over $M_0$ that is
trivial outside a compact set in $M_0$. We denote by $d\vol_g$ the
volume form on $M_0$ defined by the metric. We also assume that $E$ is
endowed with a Hermitian metric, which is the trivial (product) metric
close to infinity. Typically, $M_0$ will coincide with the interior of
a given compact manifold with corners $M$.

We first make a short review of main results of the theory of
pseudodifferential operators on $M_0$ that we need in this paper, in
the setting of operators that are multiplication by a smooth function
outside a compact set.  We follow closely the approach
of \cite{carvalho} (see however also \cite{Horm} or \cite{Tay2} for
general references).

Let us recall that a smooth function $p: W\times\RR^n\into\CC^{N\times
  N}$ defines a symbol in the class $S^m(W\times\RR^n)$ of
\textit{symbols of order} $m$ if, and only if, for any compact set
$K\subset W$ and multi-indices $\alpha$, $\beta$, there exists
$\maC_{K,\alpha,\beta} > 0$ such that $
|\partial_x^\alpha \partial_\xi^\beta
p(x,\xi)|\leq \maC_{K,\alpha,\beta}(1+|\xi|)^{m-|\beta|}\, , $ for all
$x\in K$ and $\xi\in\RR^n$. An operator $P:
\maC^\infty_c(M_0;E) \to \maC^\infty(M_0;E)$ is said to be in the
class $\Psi^m(M_0;E)$ of \textit{pseudodifferential operators of
  order} $m$ on $M_0$ if, for any coordinate chart $W$ of $M_0$
trivializing $E$ and for any $h\in \maC^\infty_c(W)$, $hPh:
\maC_c^\infty(W)^N\to \maC_c^\infty(W)^N$ is a matrix of \psos\ of
order $m$ on $W$, that is, $hP(hu) = p(x, D)u$, with
\begin{equation}\label{pdoRn}
  h P(hu)(x) = p(x,D)u(x) :=
  (2\pi)^{-n} \int_{\RR^n}{p(x,\xi)\hat{u}(\xi)e^{ix\cdot \xi} d\xi},
\end{equation}
where $\hat{u}$ denotes the Fourier transform of $u$ and $p \in
S^m(W\times\RR^n)$.

We shall work only with classical symbols, that is, symbols that have
asymptotic expansions $p\sim\sum {p_{m-k}}$ with $p_{m-k}\in
S^{m-k}(W\times\RR^n)$ positively homogeneous of degree $m-k$ in
$\xi$. Let us denote by $\pi: T^*M_0 \to M_0$ the cotangent bundle of
$M_0$. The leading term $p_m$ in the expansion of $p(x,\xi)$ as a
classical symbol defines a smooth section of the bundle $\End(E)$ over
the cotangent bundle $T^*M_0$, the \textit{principal symbol} of $P$,
which is a smooth bundle homomorphism $\sigma_m(P) : \pi^*E \to
\pi^*E$, positively homogeneous on the fibers of $T^*M_0$. By choosing
a metric on $TM_0$, the class of principal symbols can be identified
with $\maC^\infty(S^*M_0;\End(E))$ where $S^*M_0$ is the unit sphere
bundle of the cotangent bundle.  An operator $P$ is said to
be \textit{elliptic} if $\sigma_m(P)$ is invertible on $S^*M_0$. We
shall regard the cosphere bundle $S^*M_0$ as the boundary of $T^*M_0$
using a radial compactification of each fiber. It is in this sense
that we shall often extend the principal symbol of an order zero
pseudodifferential operator to $T^*M_0$.

Under certain assumptions that will be satisfied in our setting, we
have that if $P\in \Psi^m(M_0;E)$, then $P_0:=
P(1+P^*P)^{-1/2}\in \Psi^0(M_0;E)$, with $P^*$ the formal adjoint, and
$P$ is Fredholm, respectively, elliptic, if, and only if, $P_0$ is,
with $\ind(P)=\ind(P_0)$.  Moreover, $\sigma_m(P)$ is homotopic to
$\sigma_{0}(P_0)$, as sections of $S^*M_0$ (the unit sphere bundle of
$T^*M_0$). Hence, for the purposes of index theory, we will mainly be
concerned with operators of order $0$.

We start with considering the class of pseudodifferential operators
that are \textit{multiplication outside a compact}, defined as
\begin{multline}\label{def psi mult P=P_1+p}
  \Psi^{{0}}_{mult}(M_0;E):=\{P = P_1 + p, \
    P_1 \in \Psi^0(M_0;E) \mbox{ has a compactly supported} \\
        \mbox{distribution kernel and } p \in\End(E) \mbox{ is
    bounded}\, \}.
\end{multline}
(For $m<0$, we consider $ \Psi^{{m}}_{mult}(M_0;E) :=
\Psi^{{0}}_{mult}(M_0;E) \cap  \Psi^{{m}}(M_0;E)$).
We have that any operator in $ \Psi^{{0}}_{mult}(M_0;E)$ is properly
supported and that $\Psi^0_{mult}(M_0;E)$ is a $*$-algebra
(see \cite{carvalho} for details).  Moreover, denoting by
$S_{mult}^0(T^*M_0;E)$ the set of bounded symbols in $S^0(T^*M_0;E)$
that are constant on the fibres of $T^*M_0 \to M_0$ outside a compact
of $M_0$, we have that there is a well-defined symbol map
\begin{equation}\label{symb map mpsos}
  \sigma_{0}: \Psi^0_{mult}(M_0;E)\to
  S^0_{mult}(T^*M_0;E)/S^{-1}_{mult}(T^*M_0;E),
\end{equation}
which is a surjective $*$--homomorphism with $\ker (\sigma_{0})
= \Psi^{-1}_{mult}(M_0;E) $, as it is the case if $M_0$ is compact.
Moreover, we have that $P\in \Psi^0_{mult}(M_0;E)$ is always bounded
as an operator on $L^2(M_0;E)$, and that $\Psi^{-1}_{mult}(M_0;E)$
consists of compact operators, again as in the classical case of
compact manifolds.

We endow the class of symbols $S^0_{mult}(T^*M_0;E)$ with the
$\sup$-norm, as a section of $\End(E)$ over $T^*M_0$.  Note that
$S^0_{mult}(T^*M_0;E)$ can be identified with the class of bounded
sections in $\maC^\infty(S^*M_0;\End(E))$ that are constant on the
fibers of $S^*M_0$ outside a compact $K\subset M_0$, and this is
consistent with regarding $S^*M_0$ as the boundary of (the radial
fibrewise compactification of) $T^*M_0$. (Recall that $E$ is
trivialized outside a compact set, so ``constant in a neighborhood of
infinity'' does indeed make sense.)

The class $\maC_{a}(S^*M_0;E)$ of {\em asymptotically multiplication
symbols} is defined as those functions $p = p(x,\xi)\in
\maC(S^*M_0;\Endo(E))$ such that $p(x,\xi)$ is bounded in the $\sup$-norm
and, for all $\epsilon>0$, there is a compact $K_{\epsilon}\subset
M_0$ such that, for all $x\notin K_{\epsilon}$,
\begin{equation}\label{C_a}
  \sup\limits_{\xi_1,\xi_2\in S^*M_0}
  \|p(x,\xi_1)-p(x,\xi_2)\|_{\End(E_x)}<\epsilon.
\end{equation}
Roughly speaking, the elements of $\maC_{a}(S^*M_0;E)$ are continuous
sections of $\Endo(E)$ over $S^*M_0$ that are bounded and
asymptotically independent of $\xi$ on the fibers of $S^*M_0$. It is
easily checked that it is a $C^*$-subalgebra of
$\maC_b(S^*M_0;\End(E))$, the class of continuous, bounded sections of
$\Endo(E)$. 

We now define the class of \textit{asymptotically
multiplication \psos} as
\begin{equation}\label{def clpsimult} 
        \psia(M_0;E)
        := \overline{ \psimult(M_0;E)}\subset \maB(L^2(M_0;E)),
\end{equation}
that is, the closure of $\psia(M_0;E)$ in the topology of bounded
operators on $L^2(M_0; E)$.

The point of the following lemma is that, once we consider
completions, we will need to replace operators that are multiplication
at infinity with asymptotically multiplication operators.

\begin{lem}\label{lemma.princ.symb} 
The principal symbol defines a natural map
$\Psi^0_{mult}(M_0;E) \to \maC_{a}(S^*M_0;E)$, which extends by
continuity to a surjective map $\psia(M_0;E) \to \maC_{a}(S^*M_0;E)$.
\end{lem}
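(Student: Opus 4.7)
My plan is to establish the lemma in three stages: well-definedness of the map on $\Psi^0_{mult}$ with values in $\maC_a$; continuous extension to $\psia$ via an operator-norm estimate; and surjectivity via density combined with the closed-range property of $*$-homomorphisms of $C^*$-algebras. For well-definedness, given $P = P_1 + p$ as in \eqref{def psi mult P=P_1+p}, the kernel of $P_1$ is compactly supported, so $\sigma_0(P_1)$ is a smooth section of $\pi^*\End(E)$ whose base support lies in some compact $K\subset M_0$. Since $\sigma_0(p)(x,\xi)=p(x)$ is independent of $\xi$, we have $\sigma_0(P)(x,\xi) = p(x)$ for $x\notin K$; condition \eqref{C_a} is then trivially satisfied (the inner supremum vanishes for $x\notin K$), and boundedness is immediate from boundedness of $p$. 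Hence $\sigma_0(P) \in \maC_a(S^*M_0;E)$.

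\emph{Continuity and extension.} The analytic heart of the proof is the estimate
\[
   \|\sigma_0(P)\|_\infty \;\le\; \|P\|_{\maB(L^2(M_0;E))} \qquad \text{for all } P \in \Psi^0_{mult}(M_0;E),
\]
which I would prove by the classical concentration argument: at any $(x_0,\xi_0) \in S^*M_0$, work in a local trivializing chart near $x_0$ and construct $L^2$-normalized test functions $u_n$ supported in a shrinking neighbourhood of $x_0$ with oscillation frequency $n\xi_0$, for which $\|Pu_n - \sigma_0(P)(x_0,\xi_0) u_n\|_{L^2}\to 0$ by stationary phase. This is a purely local computation, hence independent of the global structure of $M_0$. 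Since $\maC_a$ is norm-closed in $\maC_b(S^*M_0;\End(E))$, the estimate allows $\sigma_0$ to be extended by continuity to the operator-norm closure $\psia = \overline{\Psi^0_{mult}}$, still with values in $\maC_a$.

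\emph{Surjectivity.} The extended map $\sigma_0:\psia \to \maC_a$ is a $*$-homomorphism of $C^*$-algebras, hence has closed range; surjectivity therefore reduces to showing that $\sigma_0(\Psi^0_{mult})$ is $\sup$-norm dense in $\maC_a$. Given $\varphi\in \maC_a$ and $\epsilon>0$, pick $K_\epsilon$ as in \eqref{C_a} and a smooth fibrewise density $\psi\ge 0$ on $S^*M_0$ of unit fibre integral, and set $\bar\varphi(x) := \int_{S^*_x M_0} \varphi(x,\eta)\psi(x,\eta)\, d\vol(\eta)$, so that $\|\varphi(x,\xi) - \bar\varphi(x)\| < \epsilon$ for $x\notin K_\epsilon$. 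For a compactly supported smooth cutoff $\chi$ with $\chi = 1$ on $K_\epsilon$, the function $\tilde\varphi := \chi\varphi + (1-\chi)\bar\varphi$ is continuous, bounded, constant in $\xi$ outside a compact set, and within $\epsilon$ of $\varphi$ in $\sup$-norm; a final mollification inside a compact yields a smooth element of $S^0_{mult}$ realizing the required approximation as a principal symbol.

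The main obstacle I anticipate is the operator-norm estimate $\|\sigma_0(P)\|_\infty\le\|P\|$: this is the only step where genuine analysis (rather than $C^*$-algebraic formalism or smooth partition-of-unity arguments) is required, and the concentration argument must be carried out carefully on a possibly non-compact $M_0$. Everything else is either an explicit symbolic calculation or a routine approximation argument.
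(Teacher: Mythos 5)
Your proof is correct and follows essentially the same route as the paper: the decisive point in both is that symbols which are multiplication (constant in $\xi$) outside a compact set are dense in $\maC_{a}(S^*M_0;E)$, after which the extension and surjectivity follow from standard compact-case facts --- your explicit contractivity estimate $\|\sigma_{0}(P)\|_{\infty}\le\|P\|_{\maB(L^2)}$ and the closed-range property of $C^*$-morphisms are precisely what the paper compresses into ``the result then follows as in the compact case.'' The only cosmetic difference is in the density step, where you replace the values at infinity by the fibre average $\bar\varphi(x)$, whereas the paper evaluates $\tilde p$ along a fixed section $s$ of $S^*M_0$ coming from a nowhere-vanishing vector field; the two gluing arguments are interchangeable.
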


\begin{proof}
We show that $\maC_{a}(S^*M_0;E)$ coincides with the closure of
\begin{multline}\label{Smult}
  \Xi := \{\,
  p \in \maC^\infty_b(S^*M_0;\End(E)), \, \text{ there exists }
  K \subset M_0 \text{ compact} \\ \text{ such that } p(x,\xi) \text{
  is independent of } \xi \text{ if } x \notin K \, \},
\end{multline}
and the result then follows as in the compact case.  It is readily
checked that any $p$ in the closure of \eqref{Smult} is
asymptotically multiplication. For the converse, let
$p\in \maC_a(S^*M_0;\End(E))$ and take
$\tilde{p}\in \maC_b^\infty(S^*M_0;\End(E))$ such that
$\|\tilde{p}-p\|_{\sup}<\epsilon$.  Let $K\subset M_0$ be compact such
that $\|p(x,\xi_1)-p(x,\xi_2)\|_{\End(E_x)}<\epsilon,$ for all
$x\notin K$, $\xi_1,\xi_2\in S^*M_0$, and let
$\phi\in \maC^\infty_c(S^*M_0)$ be such that $\supp\phi\subset M_0-K$,
$0\leq \phi\leq 1$, and $\phi(x,\xi) =1$, for $x\notin K^\prime$, with
$K^\prime$ compact such that $K\subset int(K^\prime)$.  Define
\begin{equation*}q(x,\xi):=(1-\phi(x,\xi))\;\tilde{p}(x,\xi) +
\phi(x,\xi)\;\tilde{p}(s(x)),\end{equation*} where $s$ is a fixed smooth section of
$S^*M_0$ (which exists since every connected non-compact manifold has
a nowhere vanishing vector field).  Then $q(x,\xi)\in
\maC_b^\infty(S^*M_0;\End(E))$ and for $x\notin K^\prime$,
$q(x,\xi)=\tilde{p}(s(x))$ is independent of $\xi$. Moreover,
\begin{multline*}
  \|q-p\|_{\sup} \leq \underset{{\xi\in S^*M_0}}{\sup\limits_{x\in
      K^\prime}}\|\tilde{p}(x,\xi)-p(x,\xi)\|_{\End(E_x)}
      + \underset{{\xi\in S^*M_0}}{\sup\limits_{ x\notin
      K,}} \|\tilde{p}(s(x))-p(x,\xi)\|_{\End(E_x)}\\
 \leq 2\|\tilde{p}-p\|_{\sup} + \underset{{\xi\in
      S^*M_0}}{\sup\limits_{ x\notin
      K,}} \|p(s(x))-p(x,\xi)\|_{\End(E_x)} \leq 3 \epsilon.
\end{multline*}
Hence, $p$ lies in the closure of $\Xi$ defined in
Equation \eqref{Smult}, and that concludes our proof.
\end{proof}

The following result can be proved much as in the compact case.

\begin{prop}\label{prop.symbol.ex.seq}
The principal symbol map (\ref{symb map mpsos}) is continuous and the
following sequence of $C^*$-algebras is exact
\begin{equation}\label{ex seq}
\begin{CD}
        0 @>>> \maK(M_0;E) @>>> \psia(M_0;E)
        @>{\sigma_{0}}>>\maC_{a}(S^*M_0;E) @>>> 0.
\end{CD}
\end{equation}
where now $\sigma_{0}: \psia(M_0;E) \to \maC_{a}(S^*M_0;E)$ denotes
the extension by continuity of the classical principal symbol map
$\sigma_{0} : \Psi^0_{mult}(M_0;E) \to \maC_{a}(S^*M_0;E)$.
\end{prop}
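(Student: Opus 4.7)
The plan is to adapt the classical compact-manifold proof of the symbol exact sequence, stage by stage.

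\emph{Extension of the symbol map.} I would first prove the contractive estimate $\|\sigma_0(P)\|_{\sup} \leq \|P\|_{\mathcal{B}(L^2(M_0;E))}$ for $P = P_1 + p \in \psimult(M_0;E)$. On a compact set containing $\supp P_1$, the standard coherent-state argument applies: testing $P$ against unit-norm $u_\lambda \in \CIc(M_0;E)$ concentrated near a point $x_0$ with Fourier support near $\lambda\xi_0$ (as $\lambda\to\infty$) shows $\|\sigma_0(P)(x_0,\xi_0)\|_{\End(E_{x_0})} \leq \|P\|_{\mathcal{B}(L^2)}$ for any $(x_0,\xi_0) \in S^*M_0$. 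Away from this set, $P$ is multiplication by a bounded section of $\End(E)$, whose fibrewise sup-norm equals its $L^2$-operator norm. Since $\maC_a(S^*M_0;E)$ is a $C^*$-algebra, $\sigma_0$ extends by density to a continuous $*$-homomorphism of $C^*$-algebras $\sigma_0 : \psia(M_0;E) \to \maC_a(S^*M_0;E)$, surjective by Lemma \ref{lemma.princ.symb}.

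\emph{Identification of the compacts.} Next I would show $\overline{\Psi^{-1}_{mult}(M_0;E)} = \mathcal{K}(M_0;E)$ (closure in operator norm). The inclusion $\overline{\Psi^{-1}_{mult}} \subset \mathcal{K}$ is immediate from the stated inclusion $\Psi^{-1}_{mult} \subset \mathcal{K}$ together with closedness of $\mathcal{K}$. Conversely, for $u,v \in \CIc(M_0;E)$ the rank-one operator $u \otimes \bar v$ has a smooth compactly supported Schwartz kernel and hence lies in $\Psi^{-1}_{mult}$; density of $\CIc$ in $L^2$ and of finite-rank operators in $\mathcal{K}$ gives the reverse inclusion. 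Thus $\mathcal{K} \subset \psia$, and by continuity $\sigma_0$ vanishes on $\mathcal{K}$.

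\emph{Exactness at $\psia$.} The remaining step $\ker(\sigma_0) \subset \mathcal{K}$ is the technical crux, and follows from a Gohberg-type estimate $\|[P]\|_{\mathcal{B}(L^2)/\mathcal{K}} \leq \|\sigma_0(P)\|_{\sup}$ for $P \in \psimult$. To obtain this, fix $c > \|\sigma_0(P)\|_{\sup}$; the symbol of $c^2 I - P^*P \in \psimult$ is bounded below by $c^2 - \|\sigma_0(P)\|_{\sup}^2 > 0$, so a standard parametrix construction yields a self-adjoint $Q \in \psimult$ with $Q^*Q - (c^2 I - P^*P) \in \Psi^{-1}_{mult} \subset \mathcal{K}$. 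Passing to the Calkin algebra, $[P]^*[P] + [Q]^*[Q] = c^2[I]$ with $[Q]^*[Q] \geq 0$, so $\|[P]\|^2 = \|[P]^*[P]\| \leq c^2$; letting $c \downarrow \|\sigma_0(P)\|_{\sup}$ gives the bound. Extended by continuity to $\psia$, any $P \in \ker(\sigma_0)$ then has zero Calkin norm, i.e.\ $P \in \mathcal{K}$. The main obstacle is the square-root parametrix in the non-compact setting; on a compact neighbourhood of $\supp P_1$ the classical construction applies verbatim, while at infinity, where the operator acts as multiplication by $c^2 I - p^*p$, the square root is simply the fibrewise-smooth bounded endomorphism $\sqrt{c^2 I - p^*p}$, well-defined because $\|p^*p\|_{\sup} \leq \|\sigma_0(P)\|_{\sup}^2 < c^2$. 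The two pieces are glued with a compactly supported cutoff, the mismatch being of order $-1$ and hence compact.
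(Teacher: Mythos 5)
Your argument is correct, but it takes a genuinely different route from the paper on the crucial step. For exactness at $\psia(M_0;E)$, the paper does not prove a norm estimate at all: it takes $T_n \in \psimult(M_0;E)$ with $T_n \to T$ and $\sigma_{0}(T_n) \to 0$, replaces the $T_n$ by operators vanishing near infinity, localizes with compactly supported cutoffs $\psi$ to invoke the compact-manifold case and conclude that $\psi_1 T \psi_2$ is compact, and then runs an exhaustion argument ($\psi_k T \psi_k \to T$ for an increasing sequence $\psi_k \nearrow 1$) to get compactness of $T$. You instead prove the Gohberg--H\"ormander inequality $\|[P]\|_{\maB(L^2)/\maK} \leq \|\sigma_{0}(P)\|_{\sup}$ via the square-root parametrix and pass to the closure; combined with your coherent-state lower bound this is quantitatively stronger (it identifies the essential norm with the symbol sup-norm, which the paper never needs), at the cost of the extra work of constructing the approximate square root in the multiplication-at-infinity class, whereas the paper's localization argument reduces everything softly to the known compact case. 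Two small points you should make explicit: in the gluing, compactness of the mismatch comes from its kernel being (essentially) compactly supported, i.e.\ from membership in $\Psi^{-1}_{mult}(M_0;E)$, not from negative order alone, since on a non-compact manifold order $-1$ by itself does not give compactness (your phrasing ``of order $-1$ and hence compact'' should be read that way); and the symbol $\sqrt{c^2 I - \sigma_{0}(P)^*\sigma_{0}(P)}$ is a legitimate element of $S^0_{mult}$ because the square root is smooth on positive definite endomorphisms, the symbol is bounded below by $c^2 - \|\sigma_{0}(P)\|_{\sup}^2 > 0$, and outside a compact set it reduces to the fibrewise constant $\sqrt{c^2 I - p^*p}$. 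With those remarks added, your proof is complete.
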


\begin{proof} 
The exactness at $\maC_{a}(S^*M_0;E)$ follows from Lemma
\ref{lemma.princ.symb}. Using a partition of unity and the fact
that our result is true in the compact case, we see that
$\Psi^{-1}_{mult}(M_0;E) \subset \maK$ as a dense subset. This proves
the exactness at {$\maK(M_0; E)$} and the fact that $\maK(M_0; E)$ is
contained in the kernel of $\sigma_{0}$.

As in the classical case of compact manifolds, the difficult case is
to prove that if an operator $T \in \psia(M_0;E)$ is in the kernel of
$\sigma_{0}$, then it is compact. Let then
\begin{equation*}
        T_n \in \psimult(M_0;E), \quad T_n \to T \ \ \text{ and
        } \ \ \sigma_{0}(T_n) \to 0.
\end{equation*}
Then we can replace the sequence $T_n$ with a sequence of operators
that are {\em zero} in a neighborhood of infinity.  Also, let
$\psi \in \CIc(M_0)$ have the support in a local coordinate
chart. Then $\psi T_n \psi \to \psi T \psi$ and $\sigma_{0}(\psi
T_n \psi) \to 0$. Using the case of a compact manifold, we see that
$\psi T \psi$ is a compact operator. From this we infer that $\psi_1
T \psi_2$ is also compact for {\em any} compactly functions $\psi_1$
and $\psi_2$. (One way to prove this is to consider first the case
when $\psi_1$ and $\psi_2$ have disjoint supports). Let
$0 \le \ldots \le \psi_k \le
\psi_{k+1} \le \ldots \le 1$ be an increasing sequence of compactly 
functions such that $\psi_n(x) \to 1$ for all $x$. (We are assuming
here that $M_0$ is $\sigma$-compact, which is always the case if $M_0$
has a compactification.)

We claim that $\psi_k T \psi_k \to T$. Since $\psi_k T \psi_k$ is
compact for any $k$, it will follow that $T$ is also compact. To prove
our claim, let $\epsilon > 0$ and choose $n$ such that $\|T - T_n\|
< \epsilon/3$. Then we can find $k_0$ such that $\|\psi_k T_n \psi_k -
T_n\|
\le \epsilon/3$ for $k \ge k_0$ since $T_n$ is assumed to be zero outside a compact set. Then 
\begin{equation*}
        \|T - \psi_k T \psi_k\| \le \|T - T_n\| + \| T_n - \psi_k
        T_n \psi_k\| + \| \psi_k (T_n - T) \psi_k\| \le \epsilon
\end{equation*}
for $k \ge k_0$. This completes our proof.
\end{proof}

It follows from Proposition \ref{prop.symbol.ex.seq} that $P\in
\psia(M_0;E)$ is a Fredholm operator if, and only if, its full
symbol is invertible in $\maC_a(S^*M_0;E)$ or, equivalently, in
  $\maC_b(S^*M_0;E)$.  See also \cite{Georgescu} for a discussion of
  Fredholm operators on non-compact manifolds.


\subsection{The Atiyah-Singer index theorem}

We now review the Atiyah-Singer index formula, applied to
asymptotically multiplication operators. (See for instance \cite{a-sI,
LawsonBook} for the details on the constructions below).

First, we define operators acting between sections of two different
vector bundles.  Let $E, F$ be vector bundles over $M_0$, with $E\cong
F$ outside a compact. We define $\Psi_{mult}^0(M_0;E,F)$ as the
subclass of $\Psi_{mult}^0(M_0;E\oplus F)$ of those operators that
induce $P: \maC^\infty_c(M_0;E)\to \maC^\infty(M_0;F)$.  We have so
also $S^0_{mult}(T^*M_0;E,F)\subset S^0_{mult}(T^*M_0;E\oplus F)$ and
all the results above hold, except that if $P\in
\Psi_{mult}^0(M_0;E,F)$ then $P^*\in \Psi_{mult}^0(M_0;F,E)$, so we
leave the setting of $C^*$-algebras.  In any case, an analogue of the
exact sequence given in Proposition \ref{prop.symbol.ex.seq} holds.

We now associate a $K$-theory class to an elliptic 
Fredholm operator in $\Psi_{a}^0(M_0;E,F)$.

\begin{lem}\label{lemma.def.ps1} 
For any elliptic, bounded $Q\in \Psi^{0}_{mult}(M_0;E,F)$ such that $q
:= \sigma_{0}(Q)$, here is a natural class $[\sigma_{0}(Q)]:=
[\pi^*E,\pi^*F, q]$ in the compactly supported $K$-theory of $T^*M_0$
obtained by extending $q$ to an invertible map outside a compact set
of $TM_0$ that is constant along the fibers of $TM_0 \to M_0$ outside
a compact set. This $K$-theory class is such that the Fredholm index
of $Q$ depends only on $[\sigma_{0}(Q)]$.
\end{lem}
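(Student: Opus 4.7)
I would proceed in three steps: first construct the triple $(\pi^*E,\pi^*F,q)$ and its $K$-theory class, then verify that the class is independent of the auxiliary choices, and finally show that $\ind(Q)$ descends to it via a homotopy argument inside $\psia$.

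Construction of the class. Since $Q\in\Psi^0_{mult}(M_0;E,F)$ we have, by \eqref{def psi mult P=P_1+p}, a decomposition $Q = Q_1 + p$ with $Q_1$ properly supported in a compact $K\subset M_0$ and $p$ a bounded endomorphism; the hypothesis $E\cong F$ outside a compact lets us regard $p$ as a genuine bundle map $E\to F$ off $K$. Hence $q:=\sigma_{0}(Q)\colon \pi^*E\to\pi^*F$ is a positively homogeneous degree-zero bundle map on $T^*M_0\setminus 0_{M_0}$ which, outside $\pi^{-1}(K)$, equals $\pi^*p$ and is therefore constant along the fibres. Ellipticity makes $q$ invertible on $S^*M_0$, hence on $T^*M_0\setminus 0_{M_0}$ by homogeneity, and on $T^*M_0\setminus \pi^{-1}(K)$ even across the zero section because it is fibrewise constant there. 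A cut-off $\chi\in\CI(T^*M_0)$ with $\chi\equiv 0$ in a neighbourhood of the zero section and $\chi\equiv 1$ for $|\xi|\geq 1$ then lets us glue $q$ to a smooth bundle map on all of $T^*M_0$ that remains invertible outside the compact set $\pi^{-1}(K)\cap\{|\xi|\leq 1\}$. This yields the triple $(\pi^*E,\pi^*F,q)$ and thus a class $[\sigma_{0}(Q)]\in K^0_c(T^*M_0)$.

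Well-definedness. The choices involved (splitting $Q=Q_1+p$, selecting $\chi$, extending $q$ inward) are each parametrised by convex sets, and the corresponding straight-line homotopies stay inside asymptotically multiplication symbols invertible outside a common compact set of $T^*M_0$. Hence the class is independent of the choices and is natural with respect to isomorphisms of $(M_0,E,F)$.

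Index depends only on the class. Assume $Q_0,Q_1\in\Psi^0_{mult}(M_0;E,F)$ are elliptic with $[\sigma_{0}(Q_0)]=[\sigma_{0}(Q_1)]$. By the definition of $K^0_c$ (modulo isomorphism, degenerate triples and homotopy), after stabilising by identity triples $(\pi^*G,\pi^*G,\mathrm{id})$---which corresponds to enlarging the operators by identities on $L^2(M_0;G)$ and contributes $0$ to the index---we can join the (stabilised) triples by a continuous path of triples $(\pi^*E',\pi^*F',q_t)$ whose $q_t$ are invertible outside a common compact subset of $T^*M_0$. Restricting to $S^*M_0$ produces a norm-continuous path in $\maC_a(S^*M_0;E',F')$ of invertible symbols. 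Using Proposition~\ref{prop.symbol.ex.seq} together with a partition-of-unity patching to accommodate two distinct bundles, we lift this path to a norm-continuous path $Q_t\in\psia(M_0;E',F')$ with $\sigma_{0}(Q_t)=q_t$. Invertibility of $q_t$ forces $Q_t$ to be invertible modulo $\maK(M_0;E',F')$, hence Fredholm, for every $t$, and the Fredholm index is locally constant along norm-continuous paths of Fredholm operators. This gives $\ind(Q_0)=\ind(Q_1)$.

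The hard step will be the lifting in the last paragraph: one must check that the symbol map is surjective in the mixed-bundle setting $\psia(M_0;E,F)$, not merely for $E=F$ as stated, and that the lift can be chosen norm-continuous in $t$. Both are obtained by patching local lifts from coordinate charts via a partition of unity on $M_0$ together with the continuity of $\sigma_0$ established in Proposition~\ref{prop.symbol.ex.seq}. A secondary bookkeeping issue is the translation between continuous paths of compactly supported triples on $T^*M_0$ and norm-continuous paths in $\maC_a(S^*M_0;E,F)$, which uses the radial fibrewise compactification of $T^*M_0$ and the fact that, on the complement of $\pi^{-1}(K)$, the symbol is fibrewise constant so that the $K$-theoretic and cosphere pictures genuinely coincide.
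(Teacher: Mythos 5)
Your construction of the class is essentially the paper's own argument: ellipticity plus degree-zero homogeneity gives invertibility of $q$ off the zero section, the identity $q=p$ (fibrewise constant and invertible) outside a compact of $M_0$ gives invertibility across the zero section there, and a cutoff extension produces a triple $[\pi^*E,\pi^*F,q]$ invertible outside a compact subset of $T^*M_0$; the paper leaves the zero-section extension implicit and cites Atiyah/Karoubi, and for the second assertion it simply invokes "the classical case'' together with the Fredholmness criterion. Your third step is that same classical route (stabilise, homotope symbols, lift using the surjectivity and continuity of $\sigma_0$ from Proposition \ref{prop.symbol.ex.seq}, use local constancy of the Fredholm index), so the strategy is the right one.

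There is, however, one step that fails as literally written. After stabilising, the homotopy $(\pi^*E',\pi^*F',q_t)$ provided by equality of classes in $K^0_c(T^*M_0)$ consists of arbitrary continuous bundle maps on $T^*M_0$ that are invertible outside a common compact set; such a $q_t$ has no restriction to $S^*M_0$ (which in this paper is the boundary sphere of the radial fibrewise compactification), and there is no reason for it to be asymptotically multiplication. Your "bookkeeping'' remark only justifies the identification for the endpoints $q_0,q_1$, which are fibrewise constant outside $\pi^{-1}(K)$; it says nothing about the interpolating triples. The missing ingredient is the standard deformation, performed uniformly in $t$: choosing $K\subset M_0$ compact and $R>0$ with the bad set contained in $\{x\in K,\ |\xi|<R\}$, one first caps the fibre variable at radius $R$ (making each $q_t$ radially constant for $|\xi|\ge R$) and then contracts to the zero section over $M_0\smallsetminus K$ by a cutoff in the base variable; both homotopies stay invertible outside the fixed compact because over $x\notin K$ each $q_t$ is invertible on the whole fibre, and for $|\xi|\ge R$ the capped values are taken at points outside the bad set. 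Only after this replacement is the path of symbols of the multiplication-at-infinity type, so that it restricts to $S^*M_0$, lands in $\maC_a(S^*M_0;E',F')$, and can be lifted through the exact sequence of Proposition \ref{prop.symbol.ex.seq}; one must also check (or arrange by the same deformation) that the resulting path is sup-norm continuous in $t$ and that the added degenerate triples are realised by operators of index zero. With this lemma inserted --- which is exactly the content hidden in the paper's phrase "follows as in the classical case'' --- your argument is complete.
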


\begin{proof} 
In order to associate a $K$-theory class to a Fredholm operator in
$\Psi_{a}^0(M_0;E,F)$, we start with noting that if
$Q \in \Psi^{0}_{mult}(M_0;E,F)$ is such that $q := \sigma_{0}(Q)$ is
invertible, then $q$ defines an isomorphism outside a compact subset
of $T^*M_0$ by homogeneity and the fact that it is constant on the
fibres outside a compact in $M_0$. Hence, regarding $q(x,\xi)$ as a
bundle map $\pi^*E\to \pi^*F$, $\pi:T^*M_0\to M_0$, we obtain the
desired definiton of $[\sigma_{0}(Q)]:= [\pi^*E,\pi^*F, q]$ as
in \cite{AtiyahBook, KaroubiBook}. The dependence of the index only on
$[\sigma_{0}(Q)]$ follows as in the classical case by noticing that
$Q$ is Fredholm as long as the principal symbol is invertible as
in \cite{carvalho}.
\end{proof}

Given now a Fredholm operator $P\in \psia(M_0;E,F)$ with invertible
symbol $\sigma_{0}(P)\in \maC_a(S^*M_0; E, F)$, by
Proposition \ref{prop.symbol.ex.seq}, we can take $q\in
S^0_{mult}(T^*M_0;E,F)$ sufficiently close to $\sigma_{0}(P)$ such
that $t\sigma_{0}(P)+(1-t)q$, $t\in[0,1]$, is an homotopy through
invertible symbols. We define the symbol class of $P$ as
\begin{equation}\label{symb class psia}
     [\sigma_{0}(P)] := [\pi^*E,\pi^*F, q]\in K^0(T^*M_0).
\end{equation}
This class is independent of $q$. If we take $q = \sigma_{0}(Q)$, with
$Q \in \Psi^{0}_{mult}(M_0;E,F)$, we have $\ind(P) = \ind(Q)$.
Moreover, if two Fredholm operators have the same symbol class, then
their indices coincide, and, in fact there is a well-defined
(analytic) index map
\begin{equation}
        \ind : K^0(TM_0) \to \ZZ, \quad
        [\sigma_{0}(P)] \mapsto \ind(P),
\end{equation}
where $P\in \psia(M_0;E,F)$ and we use the metric to identify
canonically $T^*M_0$ with $TM_0$. We summarize the above discussion in
the following lemma extending Lemma \ref{lemma.def.ps1}.

\begin{lem}\label{lemma.def.ps2} 
For any elliptic, bounded $Q\in \psia(M_0;E,F)$ such that $q
:= \sigma_{0}(Q)$, there is a natural class $[\sigma_{0}(Q)]:=
[\pi^*E,\pi^*F, q]$ in the compactly supported $K$-theory of $T^*M_0$
obtained by extending $q$ to an invertible map outside a compact set
of $TM_0$ that is asymptotically constant along the fibers of
$TM_0 \to M_0$. This $K$-theory class is such that the Fredholm index
of $Q$ depends only on $[\sigma_{0}(Q)]$.
\end{lem}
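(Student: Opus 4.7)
The plan is to make precise the outline sketched in the paragraph immediately preceding the statement, reducing everything to Lemma \ref{lemma.def.ps1} by approximating symbols in $\maC_a$ by symbols in $S^0_{mult}$. First, since $Q$ is elliptic, $q := \sigma_0(Q) \in \maC_a(S^*M_0;E,F)$ is invertible as a bundle map $\pi^*E \to \pi^*F$, and invertibility is an open condition in the sup-norm. Using the $(E,F)$-analogue of the density statement in Lemma \ref{lemma.princ.symb} (which extends routinely by passing through $E \oplus F$), the image of $\psimult(M_0;E,F)$ under $\sigma_0$ is dense in $\maC_a(S^*M_0;E,F)$. I would therefore choose $Q' \in \psimult(M_0;E,F)$ whose principal symbol $q'$ is sup-close enough to $q$ that the straight-line homotopy $tq + (1-t)q'$, $t \in [0,1]$, stays invertible in $\maC_a$.

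Next, I would apply Lemma \ref{lemma.def.ps1} to $Q'$ to obtain $[\pi^*E,\pi^*F,q'] \in K^0(T^*M_0)$ and define $[\sigma_0(Q)] := [\pi^*E,\pi^*F,q']$. To verify that this does not depend on the particular approximation, I would take any two admissible choices $q'$ and $q''$: both being uniformly close to the invertible $q$, the convex combination $tq' + (1-t)q''$ remains invertible for $t \in [0,1]$, producing a homotopy of invertible symbols inside $S^0_{mult}$, whose associated K-theory classes therefore agree by Lemma \ref{lemma.def.ps1}.

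To establish that $\ind(Q)$ depends only on $[\sigma_0(Q)]$, I would run the same idea at the operator level. The path $tQ + (1-t)Q'$ lies in $\psia(M_0;E,F)$ and has principal symbol $tq + (1-t)q'$, invertible in $\maC_a$ for every $t$. The Fredholm criterion derived from the exact sequence \eqref{ex seq} (the kernel of $\sigma_0$ consists of compact operators) ensures that every operator along the path is Fredholm, so norm continuity of the path together with homotopy invariance of the Fredholm index give $\ind(Q) = \ind(Q')$; Lemma \ref{lemma.def.ps1} then yields the desired dependence. The one technical item to verify—that each straight-line homotopy stays among the invertibles—is routine, reducing to the openness of the invertibles in a unital $C^*$-algebra adapted to the bundle setting via the assumption that $E \cong F$ outside a compact set; I do not expect this to be the main obstacle.
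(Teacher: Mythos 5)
Your proposal is correct and follows essentially the same route as the paper: approximate the invertible symbol $q\in\maC_a(S^*M_0;E,F)$ in sup-norm by a symbol in $S^0_{mult}$ (density from Lemma \ref{lemma.princ.symb} / surjectivity in Proposition \ref{prop.symbol.ex.seq}), use the straight-line homotopy through invertibles to define the class via Lemma \ref{lemma.def.ps1}, and check independence of the approximation and equality of indices by the same convexity and Fredholm-stability argument. This is precisely the discussion preceding the lemma in the paper, of which the lemma is stated as a summary.
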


For a manifold $X$, we let $H^*(X)$, respectively $H_c^*(X)$, denote
the cohomology, respectively the compactly supported cohomology, of
$X$. Recall that throughout this section, we assume that $M_0$ is the
interior of a compact manifold with corners $M$.  Let $\overline {TM}$
be the radial compactification of the tangent bundle to $M$. Then the
pair $(\overline{TM} , \pa \overline{TM})$ is homeomorphic to the
similar pair associated to a manifold with boundary.  Hence the (even)
Chern character yields a map
\begin{equation*}
        \ch_0: K^0(TM_0)\to H_c^{2*}(TM_0) =
        H^{2*}(\overline{TM}, \pa \overline{TM}).
\end{equation*}
(We will also consider later on the odd Chern character $\ch_1$
defined on $K^1$.) Let also $Td(T_{\CC} M) \in H^*(M)$ denote the Todd
class of the complexified tangent bundle $TM \otimes \CC$. Note that
since $TM$ is oriented, there is a well-defined fundamental class
$[TM_0]\in H_{2n}(\overline{TM}, \pa \overline{TM})$ (see for
instance \cite{LawsonBook} for details on these constructions).
 
The following result is an immediate extension of a result in
\cite{carvalho} from operators that are multiplication outside a compact
to operators that are only asymptotically so. Let
$\pi: \overline{TM} \to M$ denote the natural projection.  We have
$ch_0[\sigma_{0}(P)] \in H^{2*}(\overline{TM}, \pa \overline{TM})$ and
$\pi^*Td({T_{\CC} M)} \in H^{2*}(\overline{TM})$ so their product is
in $H^{2*}(\overline{TM}, \pa \overline{TM})=H_c^{2*}(TM_0)$.

\begin{thm}\label{ind.thm.psia}
Let $P \in \psia(M_0;E,F)$ be such that $\sigma_{0}(P)$ is invertible
in $\maC_a(S^*M_0; E, F)$. Then $P$ is Fredholm and
\begin{equation*}
        \ind(P) = (-1)^n\ch_0[\sigma_{0}(P)]\pi^* Td(T_{\CC} M)[TM_0],
\end{equation*}
where $[\sigma_{0}(P)]$ is defined using Lemma \ref{lemma.def.ps2}.
\end{thm}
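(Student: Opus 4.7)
The plan is to reduce the statement to the main theorem of \cite{carvalho}, which handles the case of operators that are multiplication outside a compact set, by exploiting that $\psia(M_0;E,F)$ is the closure of $\Psi^0_{mult}(M_0;E,F)$ and that the index depends only on the symbol class in $K^0(TM_0)$.

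First I would verify Fredholmness: by Proposition \ref{prop.symbol.ex.seq} (extended to the bundle-pair version discussed just after it), an operator $P \in \psia(M_0;E,F)$ is Fredholm whenever $\sigma_0(P)$ is invertible in $\maC_a(S^*M_0;E,F)$, since the principal symbol sits in a short exact sequence with compact-operator kernel. Next, using the surjectivity part of Lemma \ref{lemma.princ.symb} together with density of the ``multiplication outside a compact'' symbols in $\maC_a(S^*M_0;E,F)$ (this is exactly the content of the closure description via $\Xi$ used in the proof of that lemma), I would pick a $Q \in \Psi^0_{mult}(M_0;E,F)$ whose symbol $q := \sigma_0(Q)$ is so close to $\sigma_0(P)$ in sup-norm that the straight-line homotopy $t\sigma_0(P)+(1-t)q$ stays invertible in $\maC_a(S^*M_0;E,F)$ for all $t \in [0,1]$.

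This homotopy implies, on the one hand, that $[\sigma_0(P)] = [\sigma_0(Q)]$ in $K^0(TM_0)$ under the identification of Lemma \ref{lemma.def.ps2}, and on the other hand, by standard stability of the Fredholm index under continuous perturbations with invertible symbol (the analogue used in the proof of Lemma \ref{lemma.def.ps1}), that $\ind(P) = \ind(Q)$. At this stage the theorem for $P$ is reduced to the theorem for $Q \in \Psi^0_{mult}(M_0;E,F)$, which is precisely the statement proved in \cite{carvalho}:
\begin{equation*}
        \ind(Q) \;=\; (-1)^n\, \ch_0[\sigma_0(Q)]\,\pi^* Td(T_{\CC} M)[TM_0].
\end{equation*}
Substituting $[\sigma_0(Q)] = [\sigma_0(P)]$ yields the desired formula.

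The one step requiring a little care, and the main technical obstacle, is the homotopy invariance of $[\sigma_0(P)]$ under sup-norm small perturbations in $\maC_a(S^*M_0;E,F)$: this is where one needs to know that the $K$-theory class in Lemma \ref{lemma.def.ps2} really does coincide with the one produced in Lemma \ref{lemma.def.ps1} when $q$ is a smooth ``multiplication at infinity'' representative, and that invertibility is an open condition in the asymptotically multiplication $C^*$-algebra. Once these compatibilities are recorded (they follow directly from the proofs of Lemmas \ref{lemma.princ.symb}, \ref{lemma.def.ps1}, \ref{lemma.def.ps2} and Proposition \ref{prop.symbol.ex.seq}), the rest is a purely formal reduction to \cite{carvalho}.
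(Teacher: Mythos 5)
Your proposal is correct and follows essentially the same route as the paper: Fredholmness from the exact sequence of Proposition \ref{prop.symbol.ex.seq}, then approximation of $P$ by an operator in $\Psi^0_{mult}(M_0;E,F)$ whose symbol is sup-norm close, so that the straight-line homotopy stays invertible, giving equality of indices and of symbol classes, and finally an appeal to \cite{carvalho} together with homotopy invariance of both sides. The compatibility points you flag at the end are exactly what the paper records in the discussion preceding the theorem (Equation \eqref{symb class psia} and Lemma \ref{lemma.def.ps2}), so nothing further is needed.
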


\begin{proof} 
The fact that $P$ is Fredholm follows from
Proposition \ref{prop.symbol.ex.seq}.  The rest of the proof follows
from the discussion before the statement of this theorem. Indeed, let
$P \in \psia(M_0;E,F)$ be elliptic. Then we can find
$P_0 \in \Psi^{0}_{mult}(M_0;E,F)$ that is close enough to $P$ such
that the straight line joining $P$ and $P_0$ consists of invertible
operators. Both the left hand side and the right hand side of our
index formula are homotopy invariant. For $P_0$ they are equal
by \cite{carvalho}. For $P$, they will be therefore equal as well, by
homotopy invariance.
\end{proof}

\subsection{Comparison spaces}\label{ss comp spaces}
In this subsection, we extend by deformation the index formula of
Theorem \ref{ind.thm.psia} to certain pseudodifferential operators on
noncompact manifolds that extend to the compactification $M$ of $M_0$
in a suitable sense. More precisely, we require the principal symbols
of our operators to extend to a so-called ``comparison space'' and
there is an invertible complete symbol at the boundary. We thus
generalize the approach in \cite{FoxHaskell1, MelroseScattering},
using homotopy to asymptotically multiplication symbols.

Recall that $M$ is a compactification of $M_0$ to a manifold with
corners. In this section, we fix a vector bundle $A$ over $M$ such
that $A \vert_{M_0}\cong TM_0$ (later, when we consider Lie
structures, such an $A$ will be naturally associated to $M_0$.)
Denote by $\overline{A}$ the fiber-wise radial compactification of
$A$, so $\overline{A}$ is a manifold with corners that fibers over $M$
with fibers closed balls of dimension $n$. We identify $A$ with $A^*$
using a fixed metric. Let $(S^*A)_{\partial M}$ be the restriction of
the cosphere bundle $S^*A$ to the boundary $\partial M$ of $M$. Define
\begin{equation}
\label{omega}
       \Omega := \partial(\overline{A}) =
       (S^*A) \cup \overline{A} \vert_{\partial M}
\end{equation} 
such that 
\begin{equation*}
        \maC(\Omega)
        = \{(f,g) \in \maC(S^*{A})\oplus \maC( \overline{A} \vert_{\partial
        M}) : f\vert_{(S^*{A})_{\partial M}} =
        g_{(S^*{A}) \vert_{\partial M}}\}.
\end{equation*}
The space $\Omega$ will play an important role in what follows. It is
closely related to a similar space introduced by Cordes and his
colaborators in his work on Gelfand theory for non-compact
manifolds \cite{CordesBook, Cordes}.

Let $\Psi_A(M_0;E)\subset \Psi^0(M_0;E)$ be a $*$-algebra of order
$0$, bounded, pseudodifferential operators. We say that $\Omega$ is
a \emph{comparison space} for $\Psi_A(M_0;E)$ if there is a surjective
homomorphism
\begin{equation}\label{complete symbol}
            \sigma_{full} : \overline{\Psi_A(M_0;E)} \to \maC(\Omega)
\end{equation}
such that $\sigma_{full}(P)\vert_{S^*M_0} = \sigma_{0}(P)$, with
kernel included in the algebra of compact operators. We call
$\sigma_{full}$ a {\em full} symbol and write $\sigma_{full} =
(\sigma_0, \sigma_\pa)$, where
\begin{equation*}
        \sigma_\pa
        : \overline{\Psi_A(M_0;E)} \to \maC(\overline{A}\vert_{\partial
        M})
\end{equation*}
is the \emph{boundary symbol morphism}. An operator with invertible
full symbol is called {\em fully elliptic}. We shall give an index
formula for fully elliptic operators in this setting, reducing to
asymptotically multiplication operators.

We see first that any function in $\maC(\Omega)$ can be homotoped over
the interior to an asymptotically multiplication symbol.  Since the
fibres of $\overline{A}$ are isomorphic to the $n$-dimensional half
sphere $\mathbb{S}^n_+$ and hence contractible, we have that $\Omega$
is homotopy equivalent to the space $\tilde{\Omega}$ obtained from the
cosphere bundle $S^*A$ by collapsing the fibers above points of the
boundary. More precisely, $\tilde{\Omega}:=(S^*{A})
\slash \sim$, with $(x,\xi)\sim(x,\xi^\prime)$, for $x\in \partial M$,
$\xi,\xi^\prime \in {S^*{A}_x}$. Since
\begin{multline*}
	\maC(\tilde{\Omega}) \cong \{f\in \maC(\Omega) : f \mbox{
  	constant on fibers over } \partial M\} \\
        \cong \{f \in \maC(S^*{A}) : f \mbox{ constant on fibers over
  	} \partial M\},
\end{multline*}
we conclude that every $f\in \maC(\Omega)$ is canonically homotopic to
some $\tilde{f}\in \maC(\Omega)$ constant on the fibres of
$\overline{A}\vert_{\partial M} \to \partial M$ (this is achieved by a
homotopy equivalence between $\Omega$ and $\tilde \Omega$).  Moreover,
if $f$ is invertible, the canonical homotopy between $f$ and
$\tilde{f}$ is through invertible functions.  We now have the
following:

\begin{lem}\label{const.fibres}
 If
$f\in \maC(\Omega,E)$ is constant on fibers of
$\overline{A}\vert_{\partial M} \to \partial M$, then $ f_0 :=
f\vert_{ S^*M_0} \in \maC_a(S^*M_0,E).$ In particular, let
$f\in \maC(\Omega,E)$, then $f$ is homotopic to
{
$\tilde{f} \in \maC(\tilde{\Omega}, E) \subset \maC(\Omega ,E)$ and hence
it satisfies 
} 
$f_0 := \tilde{f}\vert_{ S^*M_0} \in \maC_a(S^*M_0,E).$
\end{lem}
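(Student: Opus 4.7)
The plan is to prove the main assertion by a sequential compactness argument, and then extract the ``in particular'' clause directly from the homotopy equivalence $\Omega \simeq \tilde\Omega$ established in the paragraph preceding the statement.

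For the main part, I would argue by contradiction. Suppose $f_0 := f\vert_{S^*M_0}$ failed the asymptotic multiplication condition \eqref{C_a}; then there would exist $\epsilon > 0$, a sequence $x_n \in M_0$ leaving every compact subset of $M_0$, and unit covectors $\xi_n^1, \xi_n^2 \in S^*M_0\vert_{x_n}$ with
\begin{equation*}
    \bigl\|f(x_n, \xi_n^1) - f(x_n, \xi_n^2)\bigr\|_{\End(E_{x_n})} \geq \epsilon.
\end{equation*}
Using that $M$ is compact and that the sphere bundle $S^*A \to M$ is compact, I would pass to subsequences so that $x_n \to x_\infty$ and $\xi_n^i \to \xi_\infty^i$. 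Because the $x_n$ escape every compact set of $M_0$, the limit $x_\infty$ must lie in $\partial M$, and consequently $\xi_\infty^i \in (S^*A)\vert_{\partial M}$. Under the gluing \eqref{omega}, this places $(x_\infty, \xi_\infty^i)$ in the ball fiber $\overline{A}\vert_{x_\infty}$, so the hypothesis that $f$ is constant on this fiber forces $f(x_\infty, \xi_\infty^1) = f(x_\infty, \xi_\infty^2)$. On the other hand, continuity of $f$ on the compact space $\Omega$ yields $f(x_n, \xi_n^i) \to f(x_\infty, \xi_\infty^i)$. The two conclusions contradict the uniform lower bound $\epsilon$.

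For the ``in particular'' clause, the paragraph preceding the lemma already exhibits a canonical homotopy from any $f \in \maC(\Omega, E)$ to an element $\tilde f \in \maC(\tilde\Omega, E) \subset \maC(\Omega, E)$. Since membership in $\maC(\tilde\Omega, E)$ means precisely that $\tilde f$ is constant on the fibers of $\overline{A}\vert_{\partial M} \to \partial M$, applying the first part of the lemma to $\tilde f$ delivers the conclusion $\tilde f\vert_{S^*M_0} \in \maC_a(S^*M_0; E)$.

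The only subtlety I expect to handle carefully is that the limit points $\xi_\infty^i$ must simultaneously be viewed as lying in $S^*A$, where the sequence $(x_n, \xi_n^i)$ is situated, and in the ball fiber $\overline{A}\vert_{x_\infty}$, where the constancy hypothesis is formulated. This is exactly the content of the gluing description $\Omega = S^*A \cup \overline{A}\vert_{\partial M}$ along $(S^*A)\vert_{\partial M}$, but it must be invoked explicitly in order to transfer the fiber-constancy at $\partial M$ into a statement about the continuous extension of $f_0$ to the boundary at infinity of $M_0$.
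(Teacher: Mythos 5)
Your proof is correct and rests on the same ingredients as the paper's: continuity of $f$ on the compact space $\Omega$ together with constancy on the boundary fibers of $\overline{A}\vert_{\partial M}$, the only difference being that you package the compactness argument as a contradiction via sequential compactness of $S^*A$, whereas the paper argues directly and locally (reducing to the scalar case) with a neighborhood construction around boundary points to get the uniform estimate \eqref{C_a}. Your handling of the ``in particular'' clause via the canonical homotopy between $\Omega$ and $\tilde{\Omega}$ is exactly the paper's.
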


\begin{proof} 
We consider only the scalar case. Let $p\in \maC(\partial M)$ be such
that $f_{(S^*{A})_{\partial M}}=p$. It suffices to show that, given
$\epsilon >0$, there is a neighborhood $U$ of $\pa M$ in $M$ such that
for $y\in M_0\cap U$, $\xi, \xi^\prime \in S^*_yM_0$,
\begin{equation}\label{c}
        \|f_0(y,\xi)-f_0(y, \xi^\prime)\|<\epsilon
\end{equation}
so that (\ref{C_a}) follows with $K=M\setminus U$. We can do this
locally, so assume $U_x$ is a neighborhood of $x\in \pa M$ such that
$\pi^{-1}(U_x)\cong U_x\times S^{n-1}$, $\pi:S^*A\to M$. Let
\begin{equation*}
        V_x := \{(y,\xi)\in U_x\times
        S^{n-1} \colon \|f(y,\xi)-p(x)\|<\epsilon\slash 2\}.
\end{equation*}
By continuity of $f$ in $S^*A$, we have that $V_x$ is open in
$U_x\times S^{n-1}$. Moreover, since $f(y,\xi)=p(y)$, $y\in\pa M$, it
contains $W_x\times S^{n-1}$ for some $W_x\subset \pa M$ open. Hence,
$V_x$ can be taken as $\tilde{U}_x\times S^{n-1}$, for some open
neighborhood $\tilde{U}_x$ of $W_x$ in $M$ so that (\ref{c}) holds if
$y\in\tilde{U}_x$. The last statement follows from the fact that
$\Omega$ and $\tilde{\Omega}$ are homotopy equivalent.
\end{proof}

Let us notice that for $f \in \maC(\tilde \Omega, E)$, the restriction
$f\vert_{S^*M_0}$ completely determines $f$ since $S^*M_0$ is dense in
$\tilde \Omega$.  Let now $P\in \overline{ \Psi_A^0(M;E)} $ have
invertible symbol
\begin{equation*}
        \phi = (\sigma_{0}(P), \sigma_{\pa}(P)) \in \maC(\Omega,E).
\end{equation*}
(In the terminology introduced earlier, $P$ is fully elliptic.) From
the previous lemma, there is an invertible
{
$\tilde{\phi} \in \maC(\tilde{\Omega}, E) \subset \maC(\Omega, E)$ 
}
homotopic through invertibles to $\phi$, with $\tilde{\sigma}
:= \tilde{\phi}\vert_{ S^*M_0} \in \maC_a(S^*M_0,E)$ and
$\sigma:=\sigma_0(P)$ and $\tilde{\sigma}$ are homotopic (over
$M_0$). If we let $\tilde{\sigma} = \sigma_{0}(\tilde{P})$, for some
$\tilde{P} \in \Psi^0_a(M_0;E) \cap \overline{ \Psi_A^0(M;E)}$, then
$\tilde{P}$ is Fredholm, since $\tilde{\sigma}$ is
invertible. Moreover, from the surjectivity of the complete symbol map
(\ref{complete symbol}), there is a continuous family
$P_t\in \overline{ \Psi_A^0(M;E)}$, $t\in [0,1]$, lifting the homotopy
between $\phi$ and $\tilde \phi$. Hence $P_0=P$ and $P_1=\tilde{P}$
and $\ind(P) = \ind(\tilde{P})$.

We conclude also that any fully elliptic operator $P \in
\overline{ \Psi_A^0(M;E, F)}$ has associated a well-defined 
$K$-theory class $[\tilde{\sigma}_{full}(P)]$ extending by homotopy
the definition in Equation \eqref{symb class psia} as follows.  We
know that $P$ is homotopic to some $\tilde P \in \Psi^0_a(M_0;E)$
through Fredholm operators in $\Psi^0_A(M_0;E)$ and
$[\tilde{\sigma}_{full}(P)] := [\sigma_0(\tilde P)]$, so that
\begin{equation}\label{k0 class comp sp}
        [\tilde{\sigma}_{full}(P)] := [\sigma_0(\tilde{P})] =
        [\pi^*E, \pi^*F, \tilde p] \in K^0(TM_0),
\end{equation}
where we assume $\tilde p$ is an extension of the principal
symbol of $\tilde P$ to a function that 
is multiplication at infinity and homotopic to
$\sigma_0(P)$ over the interior. In particular.
\begin{equation*}
        \ind(P) = \ind(\tilde P) = \ind([\sigma_0(\tilde{P})])
        = \ind([\tilde{\sigma}_{full}(P)]).
\end{equation*}

Consider now, for $P \in \overline{ \Psi_A^0(M;E, F)}$ fully elliptic,
\begin{equation}\label{k1 class comp sp}
        [\sigma_{full}(P)] :=
        [(\sigma_0(P), \sigma_\pa(P))]\in K_1(\maC(\Omega))\cong
        K^1(\Omega)
\end{equation}
where $\sigma_\pa(P)\in \maC(\overline{A}_{\pa M})$ and
$\sigma_0(P)\in \maC(S^*A)$ denote the boundary and principal symbol,
respectively.  Let us consider the connecting map $\pa: K^1(\Omega)\to
K^0(TM_0)$ in the long exact sequence of the pair $(\overline{A}, \pa
\overline{A} )=(\overline{A}, \Omega)$. We summarize the above discussion
to the following generalization of Lemma \ref{lemma.def.ps2}.

\begin{lem}\label{lemma.def.ps3} 
For any fully elliptic $P \in \overline{ \Psi_A^0(M;E, F)}$ 
there is a natural class
\begin{equation*}
        [\tilde{\sigma}_{full}(P)] := [\pi^*E,\pi^*F, p]
\end{equation*}
in the compactly supported $K$-theory of $T^*M_0$ obtained by
extending $\sigma_{full}(P)$ to a continuous endomorphism $p$
invertible outside a compact set of $TM_0$. This $K$-theory class is
such that the Fredholm index of $P$ depends only on
$[\tilde{\sigma}_{full}(P)]$ and 
\begin{equation*}
        [\tilde{\sigma}_{full}(P)]
        = \pa[(\sigma_0(P), \sigma_\pa(P))]=\pa[\sigma_{full}(P)].
\end{equation*}
\end{lem}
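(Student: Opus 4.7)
My plan is to assemble the lemma directly from the discussion preceding it, turning that discussion into three precise steps: (i) construct the class, (ii) show the index depends only on it, (iii) identify it with the boundary map.

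\emph{Construction.} First I would invoke the paragraph immediately above the lemma, which, for a fully elliptic $P \in \overline{\Psi_A^0(M;E,F)}$, produces a homotopy $P_t \in \overline{\Psi_A^0(M;E,F)}$, $t\in [0,1]$, with $P_0 = P$ and $P_1 = \tilde P \in \Psi^0_a(M_0;E,F) \cap \overline{\Psi_A^0(M;E,F)}$, such that the full symbols are joined by a homotopy through invertibles in $\maC(\Omega, E, F)$ and $\tilde\sigma := \sigma_0(\tilde P) \in \maC_a(S^*M_0; E, F)$. Applying Lemma \ref{lemma.def.ps2} to $\tilde P$ produces a class $[\sigma_0(\tilde P)] = [\pi^*E, \pi^*F, p] \in K^0(TM_0)$, where $p$ is any extension of $\tilde \sigma$ to an invertible endomorphism outside a compact set of $TM_0$ that is asymptotically constant along the fibers. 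This defines the candidate $[\tilde\sigma_{full}(P)]$.

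\emph{Well-definedness and index invariance.} If $\tilde P'$ is a second such deformation, then the concatenation of the two lifted homotopies gives a homotopy in $\overline{\Psi_A^0(M;E,F)}$ through fully elliptic operators from $\tilde P$ to $\tilde P'$, and composing the corresponding principal symbol homotopy with the canonical contraction of $\Omega$ onto $\tilde\Omega$ (from Lemma \ref{const.fibres}) yields a homotopy of $\tilde\sigma$ to $\tilde\sigma'$ through invertible elements of $\maC_a(S^*M_0;E,F)$. By Lemma \ref{lemma.def.ps2} both give the same $K$-theory class, so $[\tilde\sigma_{full}(P)]$ is independent of the choice of deformation. For the index, the argument in the proof of Theorem \ref{ind.thm.psia} (straight-line homotopy through Fredholm operators combined with the homotopy invariance of $\ind$) gives $\ind(P) = \ind(\tilde P)$, and by Lemma \ref{lemma.def.ps2} applied to $\tilde P$ the latter depends only on $[\tilde\sigma_{full}(P)]$.

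\emph{Identification with $\partial$.} To prove the equality $[\tilde\sigma_{full}(P)] = \partial[\sigma_{full}(P)]$, I would write down the long exact sequence of the pair $(\overline{A}, \Omega)$, which via excision and the identification $\overline{A} \smallsetminus \Omega \cong TM_0$ takes the form
\begin{equation*}
  \cdots \to K^1(\overline{A}) \to K^1(\Omega) \xrightarrow{\;\partial\;} K^0(\overline{A}, \Omega) \cong K^0_c(TM_0) \to K^0(\overline{A}) \to \cdots.
\end{equation*}
At the level of cocycles, $\partial$ sends the class of an invertible $\phi \in \maC(\Omega, \End(E,F))$ to the difference-bundle class $[\pi^*E, \pi^*F, \tilde\phi]$ of any continuous extension $\tilde\phi$ of $\phi$ to $\overline A$ that is invertible outside a compact subset of $T M_0$. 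Since replacing $\phi$ by the homotopic $\tilde\phi \in \maC(\tilde\Omega, E, F)$ does not alter its class in $K^1(\Omega)$, and since the extension $p$ of $\tilde\sigma$ used in Equation \eqref{k0 class comp sp} is precisely such an extension, the two constructions produce the same element of $K^0_c(TM_0)$.

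\emph{Expected obstacle.} The routine steps are the homotopy lifting and the invocation of Lemma \ref{lemma.def.ps2}; the main subtlety is the third step, where one must check that the concrete description of $\partial$ as ``extend over $\overline A$ to be invertible outside a compact'' agrees with the cocycle produced by Lemma \ref{lemma.def.ps2} applied to $\tilde P$. The key to this is Lemma \ref{const.fibres}, which ensures that the deformed symbol $\tilde\sigma$ really does extend continuously to $\Omega$ by a fiberwise-constant function on $\overline A|_{\partial M}$, so that the asymptotically-multiplication extension in Lemma \ref{lemma.def.ps2} and the boundary-map extension in the long exact sequence coincide up to homotopy.
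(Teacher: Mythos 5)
Your proposal is correct and follows essentially the same route as the paper: the paper gives no separate proof, since the lemma merely summarizes the preceding discussion (deformation of the full symbol to a fiberwise-constant one via Lemma \ref{const.fibres}, lifting the homotopy through the surjective full symbol map, and invoking Lemma \ref{lemma.def.ps2}), which is exactly what you reconstruct. Your added checks of well-definedness and the cocycle description of the connecting map $\pa$ for the pair $(\overline{A},\Omega)$ are the standard details the paper leaves implicit.
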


Let us also notice that $\Omega$ is homotopically equivalent to the
boundary of an oriented smooth manifold with boundary, and hence it
has a well defined fundamental class $[\Omega] \in
H_{2n-1}(\Omega)$. If $[\overline{A}]$ denotes the fundamental class
of $\overline{A}$ in $H_{2n}(\overline{A}, \Omega)$, then $[\Omega] =
\pa [\overline{A}]$. Using the compatibility of the boundary maps in K-theory
and cohomology, that is, the fact that the Chern character is a
natural transformation of cohomology theories
(see \cite{nistorHighInd} for an extension of this result to
non-commutative algebras), we obtain the following result as a
consequence of the Atiyah-Singer index formula extended to operators
that are asymptotically multiplication operators
(Theorem \ref{ind.thm.psia}).

As before, let $Td(T_\CC M)$ denote the Todd class of the complexified
tangent bundle of $M$, and $\pi: \overline{T^*M}\to M$. Also, we denote by
$\pi_\Omega: \Omega \to M$ the natural projection.

\begin{thm}\label{ind thm comp spaces}
Let $\Omega$ be a comparison space for $\Psi_A^0(M;E, F)$ and
$P \in \overline{\Psi_A^0(M;E, F)}$ be fully elliptic operator
(that is, an elliptic operator with $\sigma_\pa(P)$ invertible in
$\maC(\overline{A}\vert_{\pa M})$).  Then $P$ is Fredholm and
\begin{equation*}
	\ind(P) = {(-1)^n}\ch_0[\tilde
	{\sigma}_{full}(P)] \pi^*Td(T_\CC M) [TM_0] =
	{(-1)^n}\ \ch_1[\sigma_{full}(P)] \pi_\Omega^*Td(T_\CC M)
	[\Omega],
\end{equation*}
where $[\tilde{\sigma}_{full}(P)]$ is defined using
Lemma \ref{lemma.def.ps3}.
\end{thm}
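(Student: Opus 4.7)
The plan is to combine the reduction already sketched in the discussion preceding the theorem (deformation to an asymptotically multiplication operator, followed by Theorem \ref{ind.thm.psia}) with the naturality of the Chern character under the K-theoretic boundary map of the pair $(\overline{A}, \Omega)$, in order to obtain also the second equality. Fredholmness of $P$ is immediate from the definition of a comparison space: since $\ker \sigma_{full}$ is contained in the ideal of compact operators and $\sigma_{full}(P)$ is invertible in $\maC(\Omega)$, the operator $P$ is invertible modulo compacts.

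For the first equality, I would argue as follows. By Lemma \ref{const.fibres}, the invertible full symbol $\sigma_{full}(P) \in \maC(\Omega)$ is canonically homotopic through invertibles to a symbol $\tilde\phi$ that is constant on the fibers of $\overline{A}\vert_{\pa M} \to \pa M$ and whose restriction $\tilde\phi\vert_{S^*M_0}$ belongs to $\maC_a(S^*M_0; E, F)$. Surjectivity of $\sigma_{full}$, together with the openness of invertibility modulo compacts, allows me to lift this path to a norm-continuous family $P_t \in \overline{\Psi_A^0(M; E, F)}$ of fully elliptic (hence Fredholm) operators with $P_0 = P$ and $\sigma_0(P_1) = \tilde\phi\vert_{S^*M_0}$. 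Homotopy invariance of the Fredholm index yields $\ind(P) = \ind(P_1)$, and since $P_1 \in \psia(M_0; E, F)$, Theorem \ref{ind.thm.psia} gives
\begin{equation*}
\ind(P_1) \;=\; (-1)^n \ch_0[\sigma_0(P_1)]\, \pi^*Td(T_\CC M)[TM_0].
\end{equation*}
By Lemma \ref{lemma.def.ps3}, $[\sigma_0(P_1)] = [\tilde\sigma_{full}(P)] \in K^0(TM_0)$, producing the first equality.

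For the second equality, I would invoke the identity $[\tilde\sigma_{full}(P)] = \pa[\sigma_{full}(P)]$ from Lemma \ref{lemma.def.ps3}, where $\pa: K^1(\Omega) \to K^0(\overline{A}, \Omega) \cong K^0(TM_0)$ is the K-theoretic connecting map for the pair $(\overline{A}, \Omega)$. The naturality of Chern characters with respect to six-term exact sequences (compare \cite{nistorHighInd}) gives $\ch_0 \circ \pa = \pa^* \circ \ch_1$, where $\pa^*: H^{*}(\Omega) \to H^{*+1}(\overline{A}, \Omega) = H_c^{*+1}(TM_0)$ is the cohomological boundary. Combined with the identity $[\Omega] = \pa[\overline{A}]$ for fundamental classes and the fact that $\pi^*Td(T_\CC M)$ restricts to $\pi_\Omega^*Td(T_\CC M)$ over $\Omega$, the pairing with fundamental classes satisfies
\begin{equation*}
\ch_0[\tilde\sigma_{full}(P)]\, \pi^*Td(T_\CC M)[TM_0] \;=\; \ch_1[\sigma_{full}(P)]\, \pi_\Omega^*Td(T_\CC M)[\Omega],
\end{equation*}
which completes the proof.

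The main obstacle lies in the homotopy-lifting step: one must ensure that the symbol-level homotopy produced by Lemma \ref{const.fibres} lifts through the surjection $\sigma_{full}$ to a norm-continuous path in $\overline{\Psi_A^0(M; E, F)}$ that remains fully elliptic at every parameter value. This requires more than raw surjectivity—one needs a suitably continuous local section of $\sigma_{full}$—and it relies on the openness of the invertibles in $\maC(\Omega)$ combined with the control on $\ker \sigma_{full}$ provided by the comparison-space axiom.
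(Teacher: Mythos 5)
Your proposal is correct and follows essentially the same route as the paper: deform the full symbol to one constant on the boundary fibers (Lemma \ref{const.fibres}), lift the homotopy through the surjection $\sigma_{full}$ to a path of fully elliptic operators, apply Theorem \ref{ind.thm.psia} to the asymptotically multiplication endpoint, and obtain the second equality from the compatibility of the Chern character with the boundary maps in $K$-theory and cohomology. The lifting step you flag is exactly what the paper asserts from surjectivity of $\sigma_{full}$ (a standard path-lifting fact for surjective $C^*$-morphisms), with the harmless caveat that the endpoint of the lift agrees with an operator in $\psia(M_0;E,F)$ only up to an element of $\ker\sigma_{full}\subset\maK$, which does not change the index.
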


\begin{proof} Again, the proof of the first equality follows from the
discussion before the statement of the theorem. Indeed, let us choose
a homotopy between $P$ and $\tilde P \in \Psi_a^0(M_0; E, F)$ through
Fredholm operators $\overline{\Psi_A^0(M;E, F)}$. Both the left hand side
and the right hand side(s) of the index formula of this theorem are homotopy
invariant. For $\tilde P$ they are equal in view of Theorem \ref{ind.thm.psia}, by
homotopy invariance, they will be equal also for $P$.
To prove the last equality, we just use the fact that the
Chern character is compatible with the boundary maps in $K$-theory and cohomology. (A proof of a generalization of this result to non-commutative algebras can be found in 
\cite{nistorHighInd}.) 
\end{proof}

\section{Index formula on asymptotically commutative Lie manifolds}\label{sec2}
 
From now on,  we endow $M_0$ with the structure of a Lie manifold with compactification $M$ and
structural Lie algebra of vector fields $\maV$ (see below for the definitions). There is associated to
$(M, \maV)$ a well-behaved pseudodifferential calculus and, for
operators in this calculus, Fredholm criteria follow from the
pseudodifferential calculus of operators on
groupoids \cite{ALNpdo, LMN1}.

We show that if we introduce the additional assumptions on the
structural Lie algebra $\maV$ that it be {\em asymptotically
commutative}, then there will exist a (commutative) complete symbol
and hence we can apply the results in the previous section.
Recall that in this section $n$ may be arbitrary (in
the following section will be assumed to be even).

\subsection{Operators on Lie manifolds}\label{s lie mfld}
In this section, $M$ will denote a compact manifold with corners and
$M_0 = int(M)$, as before. Also, let ${\maV}_M$ denote the Lie algebra
of vector fields that are tangent to all faces of $M$.  We always
assume that each hyperface $H \subset M$ is an embedded submanifold of
$M$ and hence that it has a defining function $x_H$ (recall that this
means that $x_H$ is smooth on $M$, $x_H\geq 0$, $H=\{x_H=0\}$, and
$dx_H\neq 0$ on $H$).

We recall the main definitions of
\cite{ALNgeo,ALNpdo}.  We say that a Lie
subalgebra $\maV \subset {\maV_M}$ is a {\em structural Lie algebra of
  vector fields} if it is a Lie algebra with respect to the Lie
  bracket and it is also a finitely generated, projective,
  $\maC^\infty(M)$-module. By the Serre-Swan theorem, we have that
  there exists a vector bundle $A$ such that $\maV \cong
\Gamma( A)$. Moreover, there is a vector bundle morphism $\rho: A \to TM$,
called {\em anchor map}, which induces the inclusion map
$\rho: \maV = \Gamma(A) \to \Gamma(TM)$. It thus follows that $ A$ with
the given structure is naturally a Lie algebroid.

\begin{defn} \label{def.Lie.man}
A \textit{Lie manifold} $M_0$ is given by a pair $(M,\maV)$ where
$M_0=int(M)$ and $\maV$ is structural Lie algebra of vector fields
such that $\rho\vert_{M_0} : A\vert_{M_0}\to TM_0$ is an isomorphism.
\end{defn}

A metric on $M_0$ that is obtained from a metric on $A$ 
by restriction to $A\vert_{M_0} \cong TM_0$ will be called a {\em
compatible metric} on $M_0$. Any two such metrics are Lipschitz
equivalent.  We fix one of these metrics on $M_0$ in what follows.

To a Lie manifold $(M, \maV)$ we associate the algebra
$\Diff_{\maV}(M_0)$ of {\em $\maV$-differential operators on $M_0$},
defined as the enveloping algebra of $\maV$ (generated by $\maV$ and
$\maC^\infty(M)$). It was shown in \cite{ALNgeo} that
$\Diff_{\maV}(M_0)$ contains all geometric operators on $M_0$
associated to a compatible metric, such as the Dirac and generalized
Dirac operators. (This property of $\Diff_{\maV}(M_0)$ will be used in
Section \ref{sec3}.) One defines differential operators acting between
sections of vector bundles $E,F$ over $M$ as
\begin{equation*}
        \Diff_{\maV}(M_0;E,F):= e_FM_N(\Diff_{\maV}(M_0))e_E,
\end{equation*} 
where $e_E, e_F$ are projections onto $E, F\subset M\times \CC^N$.
In \cite{ALNpdo}, a class of pseudodifferential operators
associated to a given Lie structure at infinity is defined by a
process of microlocalizing $\Diff_{\maV}(M_0;E,F)$. We outline this
construction below. 

Recall that we first define the class $S^m(A^*)\subset C^\infty(A^*)$
as functions satisfying the usual symbol estimates on coordinate
patches trivializing $A^*$, which are moreover classical symbols. By
inverse Fourier transform on the fibres, each symbol $a\in S^m(A^*)$
defines a distribution $\maF_{fib}^{-1}(a)$ on $A$ that is conormal to
$M$. By restriction, $\maF_{fib}^{-1}(a)$ defines a distribution on
$TM_0$ conormal to $M_0$. We fix a metric on $A$ which then defines a
compatible metric. We denote by $\exp$ the (geodesic) exponential map
associated to this metric (yielding $\exp_x : T_x M_0 \to M_0$ for
each $x \in M_0$). Now for some $r>0$, let
\begin{equation*}
        \Phi : (TM_0)_r \to V_r \subset M_0 \times M_0, v \in
        (T_x M_0)_r \mapsto (x, exp_x(-v))
\end{equation*}
be the diffeomorphism given by the Riemann-Weyl fibration, where
$(TM_0)_r$ are the vectors with norm less than $r$, $V_r$ is an open
neighborhood of the diagonal $M_0 \cong \Delta_{M_0} \subset M_0^2$,
and $r>0$ is less than the injectivity radius of $M_0$, which is known
to be positive. Fix a smooth function $\chi$, with $\supp \chi\in A_r$
and $\chi=1$ on a neighborhhod of the zero section of $A$, which is
identified with $M$. For $a\in S^m(A^*)$, define a distribution on
$M_0^2$, conormal to $M_0$ by
\begin{equation}\label{ker.psi}
        q_\chi(a) := \Phi_*( \chi\maF^{-1}_{fib}(a)).
\end{equation}
Let $a_\chi(D)$ denote the operator on $M_0$ with Schwartz kernel
$q_\chi$. Then $a_\chi(D)$ is a properly supported (if $r<\infty$)
pseudodifferential operator on $M_0$.

For each $X\in \maV=\Gamma(A)$, let
$\psi_X: \maC^\infty_c(M)\to \maC^\infty_c(M)$ be the operator induced
by the global flow $\Psi_X: \RR \times M\to M$ by evaluation at $1$.

\begin{defn} \label{def.psos.Lie.man}
The space $\Psi_\maV^m(M_0)$ of \emph{pseudodifferential operators
generated by the Lie structure at infinity} $(M,\maV)$ is the linear
space of operators $\maC^\infty_c(M_0)\to \maC^\infty_c(M_0)$
generated by $a_\chi(D)$, with $a\in S^m(A^*)$ and
$b_\chi(D) \psi_{X_1}\cdots \psi_{X_k}$, with $b\in S^{-\infty}(A^*)$
and $X_1, \cdots , X_k\in\maV=\Gamma(A)$.
\end{defn}

We define similarly the space $ \Psi^m_{\maV}(M_0;E,F)$ of
pseudodifferential operators acting between sections of vector bundles
$E,F$ over $M$.

As for the usual algebras of pseudodifferential operators, we have the
following basic property of the principal symbol (Proposition
2.6 \cite{ALNpdo}): the principal symbol establishes isomorphisms
\begin{equation*}
        \sigma_m : \Psi^m_\maV(M_0)/\Psi^{m-1}_\maV(M_0)\to
        S^m(A^*)/S^{m-1}(A^*)\cong \maC^\infty(S^*A).
\end{equation*}

We note that the set of all $a_\chi(D)$, with $a\in S^\infty(A^*)$ is
{\em not} closed under composition of operators, that is why we
consider extra operators in $\Psi^{-\infty}(M_0)$.  To show that
$\Psi_\maV^m(M_0)$ is indeed closed under composition, results from
the following section are needed.



\subsection{Operators on groupoids}
In order to obtain algebraic properties and, in particular, Fredholm
criteria, an important result is that $ \Psi_\maV^m(M_0)$ can be
recovered from an algebra of pseudodifferential operators on a
suitable groupoid integrating $A$.  We review the main definitions of
the theory of pseudodifferential operators on groupoids, for the
benefit of the reader (see \cite{LMN, NWX}).

For a Lie groupoid $\maG$ with space of units given by a manifold with
corners $M$, with $d,r: \maG\to M$ the domain and range maps, $P=(P_x)
\in \Psi^m(\maG)$ is defined as a smooth family of pseudodifferential
operators on the fibres $\maG_x:=d^{-1}(x)$, $x\in M$, which is
right-invariant, that is, $U_gP_{d(g)}=P_{r(g)}U_g$ where $U_g:
C^\infty(\maG_{d(g}) \to C^\infty(\maG_{r(g)}), U_g(f)g^\prime:=
f(g^\prime g)$. Recall that the definition of a Lie groupoid requires
the sets $\maG_x := d^{-1}(x)$ to be smooth manifolds (no corners). We
also assume that this family is uniformly supported, in that
\begin{equation*}
        \supp(P) = \overline{\cup_x\mu(\supp(K_x))}\subset \maG
\end{equation*}
is compact, where $\mu(g,h)=gh^{-1}$ and $K_x$ denotes the Schwartz
kernel of $P_x$ (a distribution on $\maG_x\times \maG_x$).  In this
case, each $P_x$ is properly supported, so that the composition
$gh^{-1}$ is well defined. Moreover, $P$ acts on $C^\infty(\maG)$.
Let $T^d\maG=\ker d^*=\cup T_x\maG_x$ be the $d$-vertical tangent
bundle and denote by $A(\maG):=\left(T^d\maG\right)_M$ the {\em Lie
algebroid} of $\maG$ and $S^*A(\maG):=(A(\maG)^* \setminus
0) \slash\RR^*_+$ its cosphere bundle.  Let us fix a metric on
$A$. This choice defines a principal symbol map
$\sigma_m: \Psi^m(\maG) \to \CIc(S^*A(\maG))$, which is surjective,
with kernel $\Psi^{m-1}(\maG)$. One can define similarly operators
acting between sections of vector bundles: if $E$ is a vector bundle
over the space of units $M$, then $\Psi^m(\maG, r^*E)$ is well-defined
as above.

For each $x\in M$, we consider the \emph{regular representation}
$\pi_x$ of $\Psi^\infty(\maG) $ on ${\maC^\infty(\maG_x)}$ defined as
$\pi_x(P):=P_x$. When restricted to order zero operators, this is a
bounded $*$-representation, for all $x$.  For $P\in \Psi^0(\maG)$, let
\begin{equation}\label{eq.def.reduced}
         \|P\|_ r := \sup_{x\in M} \|\pi_x(P)\| 
\end{equation} 
be the \emph{reduced $C^*$-norm}. We shall also need the \emph{full
$C^*$-norm} defined as
\begin{equation*}
        \|P\|:=\sup_\rho \|\rho(P)\|,
\end{equation*}
where $\rho$ ranges through bounded $*$-representation of
$\Psi^0(\maG)$ such that for $T\in \Psi^{-\infty}(\maG)$,
$\rho(T)\leq \|T\|_1$, with $\| \; \|_1$ defined by integrating the
Schwartz kernels over the fibres (see \cite{LMN} for the precise
definitions). Endowing $ \Psi^0(\maG)$ with the full norm $\| \; \|$,
we have that the principal symbol extends to a bounded
$*$-homomorphism
\begin{equation}\label{eq.def.princ.symb}
        \sigma_0
        : \overline{ \Psi^0(\maG)} \to \maC_0(S^*A(\maG)),
\end{equation}
surjective, with kernel $ C^*(\maG) :
= \overline{\Psi^{-\infty}(\maG))}$. (A similar result holds for the
reduced norm.)

If $Y\subset M$ is an invariant subset (that is,
$d^{-1}(Y)=r^{-1}(Y)$), then $\maG_Y:=d^{-1}(Y)$ is also a continuous
family groupoid, with units $Y$ and there is a well-defined
restriction map $\maR_Y: \Psi^{m}(\maG;E)\to \Psi^{m}(\maG_Y; E_Y)$.
In this case, Lemma 3 in \cite{LMN} gives that the following sequence
is exact:
\begin{equation}\label{ex seq lie}
  0 \longrightarrow C^*(\maG_{M \setminus
  Y}) \longrightarrow \overline{\Psi^0(\maG)} \stackrel{(\sigma_0, \maR_Y)}{-\!\!\!
  -\!\!\! -\!\!\!
  -\!\!\! \longrightarrow} \maC(S^*A(\maG)) \times_{\maC(S^*A(\maG)_{Y})} 
  \overline{\Psi^0(\maG_Y)} \longrightarrow
  0,
\end{equation}
where the fibered product $\overline{\Psi^0(\maG_Y)}
\times_{\maC_0(S^*A(\maG)_{Y})} \maC_0(S^*A(\maG))$ is
defined as the algebra of pairs $(Q,
f) \in \overline{\Psi^0(\maG_Y)} \times \maC_0(S^*A(\maG))$ such that
$\sigma_0(Q)=f_{\vert{S^*A(\maG)_{Y}}}$.

If $M_0=int (M )$ is an invariant subset, one can define the
so-called \textit{vector representation} $\pi_{M_0}$, which associates
to $P\in \Psi^0(\maG)$ a pseudodifferential operator $\pi_{M_0}(P):
\maC^\infty_c(M_0)\to \maC^\infty_c(M_0)$ by the formula
$\pi_{M_0}(P)u = u_0$, with $P(u \circ r) = u_0 \circ r$
\cite{LN}. Recall that a Lie groupoid is called {\em $d$-connected}
if all the sets $\maG_x := d^{-1}(x)$ are connected. If $A \to M$ is a
Lie algebroid on $M$, we say that $\maG$ integrates $A$ if $A(\maG) =
A$. We shall need Theorem 3.3 from \cite{ALNpdo}, which gives that

\begin{thm}
\label{psiV as psiG}
Let $(M,\maV)$ be a Lie manifold with Lie algebroid $A$ and $\maG$ be
a $d$-connected groupoid over $M$ integrating $A$. Then
$ \Psi_\maV^m(M_0)\cong \pi_{M_0}(\Psi^m(\maG)).$
\end{thm}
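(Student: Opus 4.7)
The plan is to build an explicit map $\Psi^m(\maG) \to \Psi^m_\maV(M_0)$ via the vector representation $\pi_{M_0}$ and to show that its image is precisely the space of Definition~\ref{def.psos.Lie.man}. This requires lifting each of the two types of generators of $\Psi_\maV^m(M_0)$ to $\Psi^m(\maG)$, and conversely, showing that any element of $\Psi^m(\maG)$ pushes forward to a sum of such generators.

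For the lifting, start with $a \in S^m(A^*)$ and the cut-off $\chi$ as in the definition of $a_\chi(D)$. For each $x \in M$, the $d$-fibre $\maG_x$ is a smooth manifold whose tangent space $T_g \maG_x$ at any $g \in \maG_x$ is canonically identified with $A_{r(g)}$ via right translation. Equipping $A$ with the chosen metric (which also defines the compatible metric on $M_0$) and using the hypothesis $A(\maG) = A$, one obtains right-invariant metrics on the $d$-fibres and their fibrewise geodesic exponentials $\exp^\maG$. The distribution $\chi \maF_{fib}^{-1}(a)$ on $A$ then pushes forward, via a groupoid analogue of the Riemann--Weyl fibration on each $\maG_x$, to a right-invariant smooth family of Schwartz kernels. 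This defines an operator $a_\chi^\maG(D) \in \Psi^m(\maG)$; uniform support is automatic from $\supp \chi \subset A_r$. Similarly each $X \in \maV = \Gamma(A)$ lifts to a right-invariant vector field on $\maG$, whose time-$1$ flow is a groupoid automorphism $\Psi^\maG_X$ inducing an operator in the multiplier algebra of $\Psi^{-\infty}(\maG)$; composing $b^\maG_\chi(D)$ with such flows produces lifts of the second family of generators.

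The key computation is to check $\pi_{M_0}(a_\chi^\maG(D)) = a_\chi(D)$. Here $d$-connectedness is what one needs: it ensures $r: \maG_x \to M$ has image containing $M_0$ for any $x \in M_0$, so that the fibrewise exponential on $\maG_x$ agrees, under $r$, with the exponential $\exp_x: T_x M_0 \to M_0$ used in the definition~\eqref{ker.psi} of $q_\chi(a)$. Under this identification the pushforward that defines $a_\chi^\maG(D)$ on $\maG_x$ restricts, at the unit $x$, to exactly $\Phi_*(\chi\maF^{-1}_{fib}(a))$. An analogous check handles the flow generators $\psi_X$.

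For the reverse inclusion, given $P = (P_x) \in \Psi^m(\maG)$ with principal symbol $a := \sigma_m(P) \in S^m(A^*)$, form $a^\maG_\chi(D)$ and reduce the problem to the case where $P - a_\chi^\maG(D) \in \Psi^{-\infty}(\maG)$ by an asymptotic summation in decreasing order. What remains is to show that the vector representation of a uniformly supported smooth kernel on $\maG$ lies in $\Psi^{-\infty}_\maV(M_0)$, and more precisely can be written as a finite sum $\sum b^{(j)}_\chi(D) \psi_{X_1^{(j)}} \cdots \psi_{X_{k_j}^{(j)}}$. The main obstacle is this last decomposition: kernels on $\maG$ do not in general factor through the zero section of $A$, and one must exploit uniform support together with the exponential coordinates on $\maG$ near the units to write a general smooth uniformly supported kernel as such a finite combination (up to a residual term in $\Psi^{-\infty}(M_0)$ supported away from infinity, which is handled separately). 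Once this is done, surjectivity onto $\Psi^m_\maV(M_0)$ follows.
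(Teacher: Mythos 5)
First, a point of reference: the paper itself does not prove this statement at all \textemdash{} it is quoted as Theorem 3.3 of \cite{ALNpdo} \textemdash{} so your proposal has to be measured against that external proof. Your overall architecture is the standard one and matches it: quantize symbols $a\in S^m(A^*)$ to right-invariant, uniformly supported families on the $d$-fibres using the identification of the tangent spaces of $\maG_x$ with $A_{r(g)}$, check $\pi_{M_0}(a^\maG_\chi(D))=a_\chi(D)$ (your argument that $r$ restricts to a local isometry of each $d$-fibre, so fibrewise geodesics push down to geodesics of the compatible metric, is correct and, together with $\supp\chi\subset A_r$ with $r$ below the injectivity radius, does give this), lift the flows $\psi_X$ to exponentials of right-invariant vector fields, and reduce a general $P\in\Psi^m(\maG)$ modulo $\Psi^{-\infty}(\maG)$ by asymptotic summation.

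The genuine gap is exactly the step you label ``the main obstacle'' and then do not carry out: showing that $\pi_{M_0}$ of a uniformly supported smooth kernel on $\maG$ can be written as a finite sum $\sum b^{(j)}_\chi(D)\psi_{X_1^{(j)}}\cdots\psi_{X_{k_j}^{(j)}}$. Your proposed remedy, ``exponential coordinates on $\maG$ near the units,'' cannot suffice: uniform support only means that $\overline{\cup_x\mu(\supp K_x)}$ is compact in $\maG$, and this compact set may lie far from the unit space, where exponential coordinates at the units say nothing. This is precisely the place where $d$-connectedness must be used quantitatively, via the lemma (this is how \cite{ALNpdo} proceeds) that in a $d$-connected groupoid every element factors as $g=\exp(X_1)\cdots\exp(X_k)\,d(g)$ with $X_j\in\Gamma(A)$; a partition of unity subordinate to the resulting ``product of exponentials'' charts, combined with fibrewise Fourier inversion, then writes any uniformly supported smooth kernel in the generating form. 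In your write-up, by contrast, $d$-connectedness is invoked only in the compatibility check, where it is not what is needed, while the inclusion $\pi_{M_0}(\Psi^{-\infty}(\maG))\subset\Psi^{-\infty}_{\maV}(M_0)$ \textemdash{} the heart of the theorem and the only point where the hypothesis genuinely enters \textemdash{} is left unproved. The parenthetical ``residual term in $\Psi^{-\infty}(M_0)$ supported away from infinity, which is handled separately'' also needs an argument, since compactly supported smoothing operators belong to $\Psi^{-\infty}_{\maV}(M_0)$ only because they, too, can be written in the generated form, by the same kind of factorization argument over the interior.
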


The right-hand-side is well defined since, as we shall see next, one
can always assume that $M_0$ is an invariant subset of such $\maG$. In
particular, it follows that the classes $\Psi_\maV^m(M_0)$ define a
filtered algebra on $\Psi_\maV^\infty(M_0)$.

The problem of integrating Lie algebroids was solved in
\cite{CrainicFernandes}, though for our purposes, the results in
\cite{Nistor} suffice. Namely, $M_0$ and $\pa M$ form
an $A$-invariant stratification of $M$, so it follows from the glueing
theorem in \cite{Nistor} that it suffices to integrate along these
strata. Since the anchor map $\rho$ is a diffeomorphism over the
interior, we can take the $d$-connected groupoid $\maG$ integrating
$A$ to coincide with the pair groupoid over the interior, meaning that
$\maG_{M_0}\cong M_0\times M_0$, in case $M_0$ is connected. (The
general case of non-connected $M_0$ can be reduced to the connected
case by taking the compactification of each connected component.) It
then follows from \cite{Nistor} that, if $\maG_{\partial M} $ is a
groupoid integrating $A_{\pa M}$, then
\begin{equation}\label{G integ V}
  \maG= \maG_{M_0}\sqcup \maG_{\partial M} \cong (M_0\times
  M_0) \cup \maG_{\partial M} ,
\end{equation}
has the structure of a differentiable groupoid with Lie algebroid $A$.
We see that $M_0$ is indeed an invariant subset, and moreover, since
$\maG_{M_0}$ is the pair groupoid, one has that $C^*(\maG_{M_0})\cong
\maK(L^2(M_0))$, the isomorphism being induced either by the vector 
representation $\pi_{M_0}$ or by $\pi_x$, $x\in M_0$, noting that
these representations are equivalent through the isometry
$r: \maG_x\to M_0$. In particular, the vector representation
$\pi_{M_0}$ is bounded. Fredholm criteria now follow from the exact
sequence (\ref{ex seq lie}) as in \cite{LMN} (Theorem 4).

{\em From now on we shall assume that $\maG$ is a $d$-connected Lie
groupoid integrating the Lie algebroid $A \to M$ defined by a Lie
manifold $(M, \maV)$.  We shall also assume that the vector
representation $\pi_{M_0}$ is injective on $\overline{\Psi^0(\maG)}$.}
In particular, $\Psi^0(\maG) \cong \Psi^0_{\maV}(M_0)$. Moreover,
since $C^*(\maG) \subset \overline{\Psi^0(\maG)}$ and $\pi_{M_0}$
factors through the reduced $C^*$-algebra of $\maG$, so that
$\pi_{M_0}(C^*_r(\maG))=\pi_{M_0}(C^*(\maG))$, we hence obtain that
$\maG$ is amenable, in that the reduced and full norms coincide.

We shall use the isomorphism above to carry to $\Psi^m_{\maV}(M_0)$
all concepts defined for $\Psi^m(\maG)$. At the level of symbols, we
have $\sigma_m(P)=\sigma_m(\pi_{M_0}(Q))=\sigma_m(Q)$ on $M_0$, for
any $P \in \Psi^m_{\maV}(M_0)$. We shall also need the map of
restriction to the boundary for operators on $(M,\maV)$
\begin{equation*}
        \sigma_\pa
        : \overline{\Psi^0_{\maV}(M_0)} \to \overline{\Psi^0
        (\maG_{\pa M})}, \quad P\mapsto \maR_{\pa}(Q) = Q\vert_{\pa
        M},
\end{equation*}
where $\pi_{M_0}(Q)=P$ and $\maR_\pa: \Psi^0(\maG)\to \Psi^0(\maG_{\pa
M})$ is restriction to the boundary.

\begin{prop}\label{prop.fredholm.lie}
Let $(M,\maV)$ be a Lie manifold with Lie algebroid $A$ and $\maG$ be
a $d$-connected groupoid as in \eqref{G integ V} satisfying
$A(\maG) \simeq A$. Assume that the representation $\pi_{M_0}$ is
injective on $\overline{\Psi^0(\maG)}$, as above. Then
\begin{multline*}
        \overline{\Psi^0(\maG)}/\maK \cong \maC(S^*A) \times_{\maC(S^*A_{\pa
     M})} \overline{\Psi^0(\maG_{\pa M})}\\ 
        := \{ (a,
     Q) \in \maC_0(S^*A) \times \overline{\Psi^0(\maG_{\pa M})} ,
     a \vert_{\pa M} = \sigma_0(Q) \in \maC(S^*A_{\pa M})\, \}
\end{multline*} 
and $P \in \overline{\Psi_\maV^0(M_0)}$ is Fredholm if, and only if,
it is elliptic and $\sigma_\pa(P)$ is invertible in
$\overline{\Psi^0(\maG_{\pa M})} $.
\end{prop}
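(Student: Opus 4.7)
The plan is to obtain the proposition as an almost immediate consequence of the exact sequence \eqref{ex seq lie}, once one takes $Y = \pa M$, identifies $\maG_{M_0}$ with the pair groupoid, and applies the vector representation $\pi_{M_0}$.

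First I would specialize \eqref{ex seq lie} to the invariant closed set $Y = \pa M$, so that $M \smallsetminus Y = M_0$. By \eqref{G integ V}, the restriction $\maG_{M_0}$ is the pair groupoid $M_0 \times M_0$, hence $C^*(\maG_{M_0}) \cong \maK(L^2(M_0))$, with the isomorphism implemented (up to unitary equivalence) by the vector representation $\pi_{M_0}$, as noted in the text preceding the proposition. Since by hypothesis $\pi_{M_0}$ is injective on $\overline{\Psi^0(\maG)}$, Theorem \ref{psiV as psiG} promotes this into an isomorphism $\pi_{M_0} : \overline{\Psi^0(\maG)} \stackrel{\sim}{\to} \overline{\Psi^0_\maV(M_0)}$ that carries the ideal $C^*(\maG_{M_0})$ onto $\maK(L^2(M_0))$.

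Next I would pass to quotients in \eqref{ex seq lie}. The exactness of that sequence gives
\[
        \overline{\Psi^0(\maG)}/C^*(\maG_{M_0}) \cong \maC(S^*A) \times_{\maC(S^*A_{\pa M})} \overline{\Psi^0(\maG_{\pa M})},
\]
with the combined map on the right being $(\sigma_0, \sigma_\pa)$. Transporting the left-hand side through $\pi_{M_0}$ yields the first claimed isomorphism $\overline{\Psi^0_\maV(M_0)}/\maK \cong \maC(S^*A) \times_{\maC(S^*A_{\pa M})} \overline{\Psi^0(\maG_{\pa M})}$.

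Finally, for the Fredholm criterion, I would invoke Atkinson's theorem: $P \in \overline{\Psi^0_\maV(M_0)}$ is Fredholm on $L^2(M_0)$ if and only if its class in $\overline{\Psi^0_\maV(M_0)}/\maK$ is invertible. Under the isomorphism above this class is the pair $(\sigma_0(P), \sigma_\pa(P))$, which always satisfies the compatibility condition. An element of the fibered product is invertible precisely when each component is invertible in its own $C^*$-algebra (the inverses automatically satisfy the same compatibility on $S^*A_{\pa M}$, so they glue to an inverse in the fibered product). Invertibility of $\sigma_0(P)$ on $S^*A$ is ellipticity, and invertibility of $\sigma_\pa(P)$ in $\overline{\Psi^0(\maG_{\pa M})}$ is the second condition in the statement.

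The only genuinely non-formal ingredient is the identification $C^*(\maG_{M_0}) \cong \maK(L^2(M_0))$ together with the exactness of \eqref{ex seq lie}; both are quoted from the groupoid literature in the paragraphs preceding the proposition, so the proof reduces to collating these inputs with Atkinson's theorem.
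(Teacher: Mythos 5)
Your proposal is correct and follows essentially the same route as the paper's own proof: the paper also combines the exact sequence \eqref{ex seq lie} with $Y=\pa M$, the identification $\pi_{M_0}(C^*(\maG_{M_0}))\cong\maK(L^2(M_0))$, and the injectivity of $\pi_{M_0}$ (which induces an injective map of $\overline{\Psi^0(\maG)}/C^*(\maG_{M_0})$ into the Calkin algebra), so that Fredholmness of $P$ is equivalent to invertibility of its class in the fibered product. Your appeal to Atkinson's theorem plus componentwise invertibility in the fibered product is exactly the implicit content of the paper's last two sentences.
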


\begin{proof}
Since $\pi_{M_0}$ is injective and $\pi_{M_0}C^*(\maG_{M_0})\cong
\maK(L^2(M_0))$, we have the induced representation 
$\pi': \overline{\Psi^0(\maG)} /C^*(\maG_{M_0})\to
{\maB(L^2(M_0))}/\maK$, which is also injective. Hence,
$P=\pi_{M_0}(Q)$ is Fredholm if, and only if the class of $Q$ is
invertible in $\overline{\Psi^0(\maG)} /C^*(\maG_{M_0})$. The result
follows from (\ref{ex seq lie}).
\end{proof}

Moreover, the amenability of $\maG$ yields that the restriction
$\maG_{\pa M}$ is also amenable \cite{Ren} Prop. 3.7). In this case,
$\rho:= \Pi_{x\in \pa M}\pi_x$ is an injective representation of
$\Psi^0(\maG_{\pa M})$ and $\sigma_\pa(P)$, as above, is invertible
if, and only if, $\sigma_\pa(P)_x = Q_x$ is invertible for all
$x\in \pa M$, with $\pi_{M_0}(Q)=P$. (The same is true also for
$\Psi^\infty(\maG_{\pa M})$, since if $\rho(P) = 0$, then
$\rho(P(1+P^*P)^{-1/2}) = 0$.)

Elliptic operators $P$ with invertible $\sigma_{\pa}(P)$ are sometimes
called {\em fully elliptic} and the algebra $\Psi^0(\maG_{\partial
M})$ is the so-called {\em indicial algebra}. If $\pi_{M_0}$ is not
injective for some $x\in M_0$, then we only have a sufficient
condition for Fredholmness.

To finish this section, we prove a result that will later enable us to
compute the index of operators with order $m>0$ from the index of
order $0$ operators.

\begin{lem}\label{lemma.order.red}
Let $Q \in \Psi^m_{\maV}(M_0;E)$ and $P := Q(1+Q^*Q)^{-1/2}$. Then
$P \in \overline{\Psi^0_{\maV}(M_0; E)}$
\end{lem}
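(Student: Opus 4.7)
The plan is to lift the problem to the groupoid pseudodifferential calculus and reduce it to a functional-calculus statement for a positive elliptic operator. By Theorem \ref{psiV as psiG}, choose a $d$-connected Lie groupoid $\maG$ integrating $A$ and a lift $\tilde Q \in \Psi^m(\maG;E)$ with $\pi_{M_0}(\tilde Q)=Q$. Set $A_\maG := I+\tilde Q^*\tilde Q \in \Psi^{2m}(\maG;E)$; this is elliptic, formally positive, bounded below by $I$, and essentially self-adjoint on each $\maG$-fibre. Put $\tilde P := \tilde Q A_\maG^{-1/2}$, so that $P = \pi_{M_0}(\tilde P)$ by continuity of $\pi_{M_0}$, and it suffices to prove $\tilde P \in \overline{\Psi^0(\maG;E)}$.

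The first step is to place $A_\maG^{-1}$, and hence each $A_\maG^{-k}$, inside $\overline{\Psi^0(\maG;E)}$. Since $A_\maG$ is elliptic, the standard parametrix construction in $\Psi^{*}(\maG;E)$ gives $B \in \Psi^{-2m}(\maG;E)$ with $A_\maG B - I,\ BA_\maG - I \in \Psi^{-\infty}(\maG;E) \subset C^*(\maG;E)$. Because $A_\maG \geq I$ gives a genuine bounded inverse and $C^*(\maG;E)$ is an ideal of $\overline{\Psi^0(\maG;E)}$, the identity $A_\maG^{-1} = B + A_\maG^{-1}(I-A_\maG B)$ places $A_\maG^{-1}$ in $\overline{\Psi^0(\maG;E)}$, and iteration handles all positive powers. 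The second step is to approximate $A_\maG^{-1/2}$ by polynomials in $A_\maG^{-1}$: choose polynomials $q_n$ with $q_n(0)=0$ such that $q_n(s) \to s^{1/2}$ uniformly on $[0,1]$, refined further so that $q_n(s)/\sqrt{s} \to 1$ uniformly on $(0,1]$; writing $q_n(s)=s\,r_n(s)$, the operators $q_n(A_\maG^{-1}) = A_\maG^{-1}r_n(A_\maG^{-1})$ lie in $\overline{\Psi^0(\maG;E)}$ and converge in operator norm to $A_\maG^{-1/2}$ (since the spectrum of $A_\maG^{-1}$ lies in $(0,1]$).

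The third and final step produces $\tilde P$ as a norm limit of such products. The basic estimate $\|\tilde Q u\|^2 \leq \langle A_\maG u,u\rangle$ gives $\|\tilde P\|\leq 1$, and the composition $\tilde Q A_\maG^{-1}$, being a product of $\tilde Q \in \Psi^m$ with the order-$(-2m)$ parametrix modulo $C^*(\maG;E)$, belongs to $\overline{\Psi^0(\maG;E)}$. Hence
\[
\tilde P_n := \tilde Q\, q_n(A_\maG^{-1}) = (\tilde Q A_\maG^{-1})\cdot r_n(A_\maG^{-1}) \in \overline{\Psi^0(\maG;E)},
\]
and the identity $\tilde P - \tilde P_n = \tilde Q A_\maG^{-1/2}\bigl(I - A_\maG^{1/2} q_n(A_\maG^{-1})\bigr)$ together with $\|I - A_\maG^{1/2} q_n(A_\maG^{-1})\| = \sup_{s\in(0,1]}|1-s^{-1/2}q_n(s)|\to 0$ forces $\tilde P_n \to \tilde P$ in norm, completing the proof.

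The main obstacle is engineering the refined polynomial approximation of step two so that the multiplication by the unbounded $\tilde Q$ in step three still yields norm convergence; this is precisely where one exploits $q_n(0)=0$ and the lower bound $A_\maG \geq I$. An equivalent route, bypassing the explicit polynomial choice, is to use the Dunford--Riesz representation $A_\maG^{-1/2} = (2\pi i)^{-1}\int_\Gamma \lambda^{-1/2}(\lambda - A_\maG)^{-1}\,d\lambda$ on a contour $\Gamma$ enclosing the spectrum and approximate the integral by Riemann sums of resolvents, each of which lies in $\overline{\Psi^0(\maG;E)}$ by the same parametrix argument as in step one.
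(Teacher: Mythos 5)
Your overall strategy --- lift $Q$ to $R\in\Psi^m(\maG;E)$ with $\pi_{M_0}(R)=Q$, prove that $R(1+R^*R)^{-1/2}\in\overline{\Psi^0(\maG;E)}$ at the groupoid level, and push down by the vector representation --- is exactly the paper's route; the difference is that the paper obtains the groupoid statement by citing Theorem 7.2 of \cite{LN}, whereas you attempt to prove it from scratch, and that is where the argument breaks. The fatal step is the polynomial approximation: there are \emph{no} polynomials $q_n$ with $q_n(0)=0$ and $q_n(s)/\sqrt{s}\to 1$ uniformly on $(0,1]$, because $q_n(s)/\sqrt{s}\to 0$ as $s\to 0^+$ for every such polynomial, so $\sup_{s\in(0,1]}\bigl|1-s^{-1/2}q_n(s)\bigr|\ge 1$ for all $n$. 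Since $A_\maG=I+\tilde Q^*\tilde Q$ is unbounded (it has order $2m>0$ and the fibres $\maG_x\cong M_0$, $x\in M_0$, are non-compact), the spectrum of $A_\maG^{-1}$ accumulates at $0$, so $\|I-A_\maG^{1/2}q_n(A_\maG^{-1})\|$ does not tend to $0$ and the norm convergence $\tilde P_n\to\tilde P$ fails. This is not a removable technicality: approximating $A_\maG^{-1/2}$ in a way that survives left multiplication by the order-$m$ operator $\tilde Q$ is precisely the hard analytic content of the cited result, and the known proofs use parameter-dependent parametrices for $(A_\maG+\lambda)^{-1}$ with norm estimates in $\lambda$ (your fallback Dunford--Riesz route fails for the same reason: the spectrum of $A_\maG$ is unbounded, so there is no admissible compact contour, and the resulting integral for $\tilde Q A_\maG^{-1/2}$ has a non-integrable tail, since $\lambda^{-1/2}\|\tilde Q(A_\maG+\lambda)^{-1}\|\sim\lambda^{-1}$ at infinity).

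There is a second, quieter gap in your first step. The identity $A_\maG^{-1}=B+A_\maG^{-1}(I-A_\maG B)$ places $A_\maG^{-1}$ in $\overline{\Psi^0(\maG;E)}$ only if you already know that the invariant bounded family $(A_{\maG,x}^{-1})_x$ multiplies $C^*(\maG;E)$ into itself, i.e.\ that $A_\maG$ is affiliated to the groupoid $C^*$-algebra; a uniformly bounded invariant family need not lie in, nor multiply into, $\overline{\Psi^0(\maG;E)}$, and appealing to the fact that $C^*(\maG;E)$ is an ideal of $\overline{\Psi^0(\maG;E)}$ is circular, since membership of $A_\maG^{-1}$ in (the multiplier algebra of) that algebra is what is being established. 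The same issue recurs when you place $\tilde Q A_\maG^{-1}$ in $\overline{\Psi^0(\maG;E)}$ ``modulo $C^*(\maG;E)$''. These affiliation and square-root facts are exactly what Theorem 7.2 of \cite{LN} (with the essential self-adjointness results there) supplies; the paper's proof consists of quoting it and then checking that $\pi_{M_0}$ intertwines the relevant functional calculi, which is why it is short. If you want a self-contained argument you would have to reprove that theorem, e.g.\ via complex powers or a parameter-dependent calculus on $\maG$, rather than by the polynomial trick.
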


\begin{proof}
Let $\maG$ be the canonical groupoid integrating $(M,  \maV 
)$. It follows from groupoid calculus applied to
$\Psi^0(\maG)$, more precisely from Theorem
7.2 in \cite{LN}, that if $L \in {\Psi^{2m}(\maG)}$ is such that
$L \geq 1$ and $\sigma_{2m}(L) > 0$ then $SL^{-1/2} \in
\overline{\Psi^0(\maG)}$, for any $S\in \Psi^m(\maG)$.
{F}rom Theorem \ref{psiV as psiG}, let $R \in {\Psi^m(\maG)}$ be such
that $\pi_{M_0}(R) = Q$, with $\pi_{M_0}$ the vector
representation. Then
\begin{equation*}
        1 + Q^*Q = 1 + \pi_{M_0}(R)^*\pi_{M_0}(R) = \pi_{M_0}(1+R^*R)
\end{equation*}
and we can apply the result above to $1+R^*R$ to obtain $
R(1+R^*R)^{-1/2}\in \overline{{ \Psi}^0(\maG)}$. Hence
$\pi_{M_0}(R(1+R^*R)^{-1/2}) \in \overline{{ \Psi_\maV}^0(M_0)}$. It
follows from the definitions that
\begin{equation*}
	\pi_{M_0}((1+R^*R)^{-1/2}) =
	\pi_{M_0}(1+R^*R)^{-1/2},
\end{equation*} 
so that $P = Q(1+Q^*Q)^{-1/2} \in \overline{\Psi^0_{\maV}(M_0)}$.
\end{proof}
 
Note that if $P$ and $Q$ are as in the lemma above
(Lemma \ref{lemma.order.red}), $\sigma_m(Q)$ and $\sigma_0(P)$ are
homotopic as sections of $S^*A$. Moreover, if we define the map of
restriction to the boundary $\sigma_\pa
: \Psi^m_\maV(M_0;E)\to \Psi^m(\maG_{\pa M}; r^*E)$ given, as before,
by $\sigma_\pa(Q):=\maR_\pa(R)$, with $\pi_{M_0}(R)=Q$, then it
follows from the proof that
\begin{equation*}
        \sigma_\pa(P) = \sigma_\pa(Q(1+Q^*Q)^{-1/2}) = \sigma_\pa(Q)
        (1 + \sigma_\pa(Q)^*\sigma_\pa(Q))^{-1/2},
\end{equation*}
hence $\sigma_\pa(P)$ is invertible if, and only if, $\sigma_\pa(Q)$
is.  We say that $Q\in \Psi^m_{\maV}(M_0;E)$ is {\em fully elliptic}
if, and only if, $P$ is. In that case, $P$ is Fredholm and $Q$ will
also be Fredholm, in the setting of unbounded operators, with
$\ind(Q)=\ind(P)$ (see Section \ref{ssec.Callias}).


\subsection{The asymptotically commutative case\label{ss ind form lie}}

In this subsection, we prove an index formula for certain classes of
pseudodifferential operators on Lie manifolds whose associated
groupoids are such that the restrictions at the boundary yield bundles
of commutative Lie groups. The main point is giving conditions that
yield commutativity of the algebra $\Psi^0(\maG_{\partial M})$, using
the notation of the previous section, so that Fredholmness depends on
invertibility in an algebra of functions, thus reducing to the setting
considered in Section \ref{ss comp spaces}.  This is known to hold for
the scattering and double-edge calculus \cite{LMN, LMor2, LN,
MelroseScattering, parenti}.  The dimension of $M$ is denoted by $n$,
as before. Recall that in this section, we do not assume $n$ to be
even.

\begin{defn}\label{def.com.inf}
Let $(M, \maW)$ be a connected Lie manifold with Lie algebroid $\pi :
A_{\maW} \to M$ with the property that any $X \in \maW$ vanishes at
the boundary $\pa M$ (that is, on any face of the boundary) and the
resulting Lie algebras $A_{\maW, x} := \pi^{-1}(x)$ are commutative. A
Lie manifold $(M, \maW)$ with this property will be called an {\em
asymptotically commutative} Lie manifold, and $\maW$ will be called
{\em commutative at infinity}.
\end{defn}

(We reserve the notation $\maW$ for asymptotically
commutative structural Lie algebras of vector fields, whereas $\maV$
will denote a general such structural Lie algebra of vector fields.)

Let $(M, \maW)$ be an asymptotically commutative Lie manifold. Then a
groupoid integrating $A_{\maW}\vert_{\pa M}$ is $A_{\maW}\vert_{\pa
M}$ itself (since the commutative Lie algebra $\RR^n$ identifies with
itself via the exponential map). According to \cite{Nistor}, there
will be a unique Lie manifold structure on the disjoint union
\begin{equation}\label{eq.can.G}
        \maG := (M_0 \times M_0) \cup A_{\maW}\vert_{\pa M}
\end{equation}
such that $\maG$ is a Lie groupoid integrating $A = A_{\maW}$. Thus
any Lie algebroid associated to an asymptotically commutative Lie
manifold has a canonical Lie groupoid integrating it.  Let
$\overline{A}$ be the sphere bundle obtained by radial
compactification of the fibres of $A$.

\begin{prop} \label{comm indicial alg}
Assume $(M, \maW)$ is an asymptotically commutative Lie manifold and
let $\maG$ be the canonical Lie algebroid integrating it, as in
Equation \eqref{eq.can.G}. Then $\Psi^0(\maG_{\partial M})$ is
commutative and
\begin{equation*}
        \overline{\Psi^0(\maG_{\partial M})} \cong
        \maC(\overline{A}\vert_{\partial M}).
\end{equation*}
\end{prop}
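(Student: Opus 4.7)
The plan is to first make the groupoid $\maG_{\partial M}$ explicit, then deduce commutativity from its abelian fiber structure, and finally identify the $C^*$-completion via fiberwise Fourier transform.

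First I would observe that, since every $X \in \maW$ vanishes at $\partial M$, each point $x \in \partial M$ is an orbit of the $A$-action, so the $d$-fiber of $\maG_{\partial M}$ over $x$ coincides with the isotropy group at $x$. By the construction in \eqref{eq.can.G}, this isotropy group is the abelian Lie group obtained by exponentiating the commutative Lie algebra $A_{\maW,x}$, which is just $(A_{\maW,x},+) \cong \RR^n$. Globally, $\maG_{\partial M}$ is the vector bundle $A_{\maW}|_{\partial M} \to \partial M$ regarded as a bundle of abelian Lie groups under fiberwise addition.

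Next, commutativity of $\Psi^0(\maG_{\partial M})$ follows fiberwise: a right-invariant, properly supported \pso\ on an abelian Lie group is convolution with a compactly supported distribution conormal at the origin, and convolution on an abelian group is commutative. Applied to the smooth uniformly supported family $\maG_{\partial M} \to \partial M$, this shows $\Psi^0(\maG_{\partial M})$ is commutative, and the property persists under the norm completion.

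For the identification with $\maC(\overline{A}|_{\partial M})$, I would use the fiberwise Fourier transform. For a vector bundle $V \to B$ viewed as a bundle of abelian Lie groups, fiberwise Fourier transform gives a canonical $C^*$-isomorphism $C^*(V) \cong \maC_0(V^*)$. Applied to $A_{\maW}|_{\partial M}$, together with the metric identification $A \cong A^*$, this yields $C^*(\maG_{\partial M}) \cong \maC_0(A|_{\partial M})$. Specializing the principal symbol sequence \eqref{ex seq lie} to $\maG_{\partial M}$ (with $Y = \emptyset$) gives the extension
\begin{equation*}
0 \longrightarrow \maC_0(A|_{\partial M}) \longrightarrow \overline{\Psi^0(\maG_{\partial M})} \stackrel{\sigma_0}{\longrightarrow} \maC(S^*A|_{\partial M}) \longrightarrow 0.
\end{equation*}
Since $\overline{A}|_{\partial M}$ is the fiberwise radial compactification of $A|_{\partial M}$ with boundary at infinity $S^*A|_{\partial M}$, this is precisely the extension defining $\maC(\overline{A}|_{\partial M})$ by restriction to interior and boundary, yielding the asserted isomorphism.

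The main obstacle is checking that the fiberwise Fourier transform extends to a $C^*$-isomorphism that is actually compatible with the principal symbol in the right way: one must verify that the Fourier-transformed convolution kernel (a function on $A^*|_{\partial M}$ vanishing at infinity) and the principal symbol (a function on $S^*A|_{\partial M}$) fit together as a single continuous function on the compactification $\overline{A}|_{\partial M}$, rather than merely giving an unrelated pair. This gluing at the sphere at infinity, together with the uniform support condition on families of pseudodifferential operators, is what upgrades the fiberwise statement to a global $C^*$-isomorphism over $\partial M$.
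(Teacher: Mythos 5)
Your identification of $\maG_{\pa M}$ with the bundle of abelian Lie groups $A_{\maW}\vert_{\pa M}$ and the commutativity argument via fiberwise convolution are exactly the paper's first step, so that part is fine. The gap is in your last step: knowing that $\overline{\Psi^0(\maG_{\pa M})}$ sits in an extension of $\maC(S^*A\vert_{\pa M})$ by $C^*(\maG_{\pa M}) \cong \maC_0(A\vert_{\pa M})$, and that $\maC(\overline{A}\vert_{\pa M})$ sits in an extension with the same ideal and quotient, does not by itself ``yield the asserted isomorphism'': two $C^*$-algebras can occur as middle terms of extensions with identical ideal and quotient without being isomorphic (the Busby invariants may differ), and even when they are isomorphic one still has to produce a map compatible with the two extensions. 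You correctly flag this gluing as the main obstacle, but you do not carry it out; moreover the way you phrase it (``the Fourier-transformed convolution kernel, a function on $A^*\vert_{\pa M}$ vanishing at infinity'') only describes elements of the ideal $C^*(\maG_{\pa M})$ --- a general order-zero element is not the fiberwise Fourier transform of an integrable kernel, so the object you propose to glue to the principal symbol is not available for the operators that actually matter.

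The paper closes exactly this gap by working with the full (total) symbol instead of the pair (ideal Fourier transform, principal symbol): a translation-invariant classical \pso\ of order zero on a fiber $A_x \cong \RR^n$ is a Fourier multiplier by a single classical symbol $p(\xi) \in \tilde S^0(A_x)$ independent of the base variable; because $p$ is classical of order zero it extends continuously to the radial compactification $\overline{A_x}$, its boundary value on the sphere at infinity is precisely the principal symbol, and the $L^2$-operator norm of the multiplier equals the sup norm of $p$. Hence the total-symbol map is an isometric $*$-isomorphism of $\Psi^0(A_{\pa M})$ onto the continuous families in $\tilde S^0(A_x)$, and since $\overline{\tilde S^0(V)} \cong \maC(\overline V)$ in the sup norm, one gets $\overline{\Psi^0(\maG_{\pa M})} \cong \maC(\overline{A}\vert_{\pa M})$ directly (the agreement of reduced and full norms, automatic here by commutativity/amenability, is what lets one complete with respect to either). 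To repair your argument, replace the extension comparison by this explicit full-symbol isomorphism, or at least verify on a dense subalgebra that your interior and boundary data are the two restrictions of one continuous function on $\overline{A}\vert_{\pa M}$ and that the resulting assignment is isometric.
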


\begin{proof} 
It follows from \eqref{eq.can.G} that the algebra of
pseudodifferential operators on $\maG_{\pa M}$ coincides with $\Psi^0
(A_{\pa M})$, that is, with the algebra of continuous families of
$(P_x)$ of translation invariant pseudodifferential operators $P_x$
acting on the fibers $(A_{\pa M})_x$ of $A_{\pa M}$, $x \in \pa M$.

Now, the pseudodifferential operators of order zero on a vector space
$V$ that are translation invariant coincide with convolution operators
with functions whose Fourier transform is in
\begin{equation*}
        \tilde{S^0}(V)=\{p\in \maC^\infty(V)\colon
        p(y,\xi):=p(\xi) \in S^0(T^*V)\}
\end{equation*}
(symbols of order zero that are independent of $y$). The algebra of
convolution operators is commutative, so it follows straight away that
$\Psi^0 (A_{\pa M})$ is commutative. (In particular, the reduced and
full $C^*$-norms coincide.) Moreover, one can check that
$\overline{\tilde{S^0}(V)}\cong \maC(\overline{V})$, with
$\overline{V}$ the radial compactification of $V$. Hence,
$\overline{\Psi^0 (A_{\pa M})}\cong \maC(\overline{A}\vert_{\partial
M})$, since there is a isomorphism between elements of $\Psi^0 (A_{\pa
M})$ and continuous families in ${\tilde{S^0}(A_x)}$, which is bounded
with respect to the reduced, hence the full, norm.  This proves
$\overline{\Psi^0(\maG_\partial))} \cong \maC(\overline{A}\vert_{\partial
M})$, as claimed.
\end{proof}

Note that it follows from the proof that the isomorphism above is
really given by the total symbol, as in \eqref{pdoRn}, of the indicial
boundary operator. For order $m>0$ operators, we have
$\Psi^m(\maG_{\pa M}) = \Psi^m({A}\vert_{\pa M})\cong
{\tilde{S^m}({A}\vert_{\pa M})} \subset \maC^\infty({A}\vert_{\pa
M})$, the isomorphism being again given by the total symbol, that is,
including the lower order terms of the symbol. (This total symbol is
defined since the resulting operators on the fibers of $A \to \pa M$
are translation invariant, and hence they are convolution
operators. The total symbol is simply the Fourier transform of the
resulting convolution distributions.)

As in Section \ref{ss comp spaces}, Equation \eqref{omega} consider
$\Omega := \partial(\overline{A}) =
(S^*A) \cup \overline{A} \vert_{\partial M}$ such that $\maC(\Omega)
= \{(f,g) \in \maC(S^*{A}) \oplus \maC( \overline{A} \vert_{\partial
M}), f = g \text{ on } S^*{A}_{\partial M} \}.$

Define the \textit{boundary symbol} for operators on $(M, \maW)$ by
\begin{equation}\label{boundary.symb}
	\sigma_\pa: \overline{\Psi_{\scriptsize\maW}^0(M_0;E)} \to \maC
	(\overline{A}\vert_{\partial M})
\end{equation}
as the map of restriction to the boundary composed with the
isomorphism given by the previous proposition. For
$P\in \Psi^m_\maW(M_0;E)$, the boundary symbol is just given by the
total symbol of $\maR_\pa(Q)=Q\vert_{\pa M}\in \Psi^0(A_{\pa M})$,
with $\maR_{\pa M}: \Psi^{m}(\maG, r^*E)\to \Psi^{m}(A_{\pa M},
r^*E_{\pa M})$ the restriction map and $\pi_{M_0}(Q)=P$.

Moreover, it follows from (\ref{ex seq lie}) that $\maC(\Omega)$ is
the recipient of full symbols of pseudodifferential operators on
$M$, since $\maC(\overline{A}\vert_{\partial
M})\times_{\maC_0(S^*A_{\partial M})} \maC_0(S^*A)
=\maC(\partial\overline{A}) = \maC(\Omega)$. We have then a map
\begin{equation}
        \sigma_{full} := (\sigma_0, \sigma_\pa) :
        \overline{ \Psi_{\scriptsize\maW}^0(M;E)} \to \maC(\Omega),
\end{equation} 
which is surjective, continuous and a $*$-algebra morphism. We will
see in the next proposition that $\maK\subset \ker \sigma_{full}$, so
it follows that $\Omega=\partial(\overline{A})$ is a comparison space
for $\overline{\Psi_{\scriptsize\maW}^0(M;E)}$ (see Equation
\eqref{complete symbol}), and hence the results from
Section \ref{ss comp spaces} apply.

\begin{prop}\label{prop.Fred.comm}  
Assume $(M, \maW)$ is an asymptotically commutative Lie manifold and
let $\maG$ be the canonical Lie groupoid integrating it, as in
Equation \eqref{eq.can.G}. Then $\pi_{M_0}$ is injective on
$\overline{\Psi^0(\maG)}$, and hence the following sequence is exact.
\begin{equation}
\begin{CD}
  0 @>>> \maK(M;E) @>>> \overline{ \Psi_\maW^0(M;E)}
  @>{(\sigma_\pa,\sigma_0)}>>\maC(\Omega) @>>> 0.
\end{CD}
\end{equation}
In particular, an operator $P \in \overline{\Psi^0_{\maW}(M; E)}$ is
Fredholm if, and only if, it is fully elliptic, meaning that
$\sigma_{full}(P) = (\sigma_{0}(P), \sigma_{\pa}(P)) \in \maC(\Omega)$
is invertible.
\end{prop}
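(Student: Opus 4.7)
The proposition has two parts: the injectivity of $\pi_{M_0}$ on $\overline{\Psi^0(\maG)}$, and the resulting exact sequence (from which the Fredholm criterion is automatic via Atkinson's theorem). Apart from the injectivity, everything else is a matter of assembling Proposition \ref{prop.fredholm.lie} with the commutativity identification of Proposition \ref{comm indicial alg}, so the plan is to establish the injectivity first and then read off the rest.

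For the injectivity, I would exploit amenability of the canonical integrating groupoid $\maG = (M_0 \times M_0) \sqcup A_{\maW}\vert_{\pa M}$. Both invariant pieces are amenable: the open piece $\maG_{M_0}$ is the pair groupoid, and the closed piece $\maG_{\pa M} = A_{\maW}\vert_{\pa M}$ is a bundle of the commutative (hence amenable) Lie groups $\RR^n$. Since amenability is preserved under such open/closed decompositions of Lie groupoids (Renault, \cite{Ren}), $\maG$ is amenable and therefore $\|\cdot\| = \|\cdot\|_r = \sup_x \|\pi_x(\cdot)\|$ on $\overline{\Psi^0(\maG)}$. Now $\pi_{M_0}$ is unitarily equivalent to $\pi_x$ for each $x \in M_0$ via the isometry $r: \maG_x \to M_0$, so $\pi_{M_0}(P) = 0$ forces $\|\pi_x(P)\| = 0$ for every $x \in M_0$. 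A lower-semicontinuity argument for $x \mapsto \|\pi_x(P)\|$ (standard for continuous families of regular representations on continuous family groupoids) then propagates the vanishing from the dense open set $M_0$ to all of $\pa M$, so $\|P\|_r = 0$ and hence $P = 0$. I expect the precise form of this semicontinuity argument, and its compatibility with taking closures in the operator norm, to be the main delicate point.

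With injectivity in hand, Proposition \ref{prop.fredholm.lie} supplies the exact sequence
\begin{equation*}
    0 \to \maK \to \overline{\Psi_\maW^0(M;E)} \to \maC(S^*A) \times_{\maC(S^*A_{\pa M})} \overline{\Psi^0(\maG_{\pa M})} \to 0.
\end{equation*}
Proposition \ref{comm indicial alg} identifies $\overline{\Psi^0(\maG_{\pa M})}$ with $\maC(\overline{A}\vert_{\pa M})$, and the total-symbol description given there shows that the morphism into $\maC(S^*A_{\pa M})$ becomes restriction to the sphere at infinity of the radial compactification. Consequently the fibered product collapses to
\begin{equation*}
    \{(f,g) \in \maC(S^*A) \oplus \maC(\overline{A}\vert_{\pa M}) : f\vert_{S^*A_{\pa M}} = g\vert_{S^*A_{\pa M}}\} = \maC(\Omega)
\end{equation*}
by the definition of $\Omega$ in \eqref{omega}, yielding the displayed exact sequence. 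Finally, $P$ is Fredholm iff its class in $\overline{\Psi_\maW^0(M;E)}/\maK$ is invertible, iff $\sigma_{full}(P) = (\sigma_0(P), \sigma_\pa(P))$ is invertible in $\maC(\Omega)$, iff $P$ is fully elliptic.
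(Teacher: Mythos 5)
Your proposal is correct in substance, but it establishes the crucial injectivity of $\pi_{M_0}$ by a genuinely different route than the paper. You argue by weak containment: $\maG$ is amenable because it is an extension of the amenable bundle of abelian groups $A_{\maW}\vert_{\pa M}$ by the pair groupoid, so full and reduced norms coincide; since $\pi_{M_0}\cong\pi_x$ for every interior $x$, the hypothesis $\pi_{M_0}(P)=0$ gives $\|\pi_x(P)\|=0$ on the dense set $M_0$, and lower semicontinuity of $x\mapsto\|\pi_x(P)\|$ (proved by restricting compactly supported extensions of near-optimal vectors to nearby fibers, first for $P\in\Psi^0(\maG)$ using continuity of the family $(P_x)$ and of the Haar system, then passed to the closure by uniform approximation) forces the boundary representations to vanish as well, whence $\|P\|_r=\|P\|=0$. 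The paper instead recovers the full symbol of $P$ from its action on $M_0$: it notes that the kernel $I$ of $\pi_{M_0}$ meets $\maK$ trivially, recovers the principal symbol from the interior action so as to reduce to operators of very negative order, and then recovers the boundary symbol by a Fourier-transform argument exploiting that the injectivity radius of the $\maW$-metric grows near $\pa M$; this uses the specific structure $\maW=f\maV$ quite directly. Your route is more abstract: once the semicontinuity step is written out carefully (this is indeed the delicate point you flag, and you also need the statement, as in \cite{LMN}, that full and reduced norms agree on the pseudodifferential extension and not merely on $C^*(\maG)$ for amenable $\maG$), it shows faithfulness of the vector representation on the reduced closure for any such groupoid with dense interior orbit, amenability being needed only to identify this with the full-norm closure; the paper's route is more concrete and analytic, tied to the asymptotically commutative geometry. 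Your assembly of the exact sequence and of the Fredholm criterion from Propositions \ref{prop.fredholm.lie} and \ref{comm indicial alg} together with the definition of $\Omega$ in \eqref{omega} coincides with the paper's.
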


\begin{proof} 
The second part will follow from the first part
using Proposition \ref{prop.fredholm.lie}, so we concentrate on
proving the injectivity of $\pi_{M_0}$.  Let $I$ be the kernel of
$\pi_{M_0}$. We want to show that $I = \{0\}$. We have that
$\pi_{M_0}$ is injective on the subalgebra of compact operators of
$\Psi^0_\maW(M,E)$, so $I \cap \maK = 0$. It follows that
$(\sigma_0, \sigma_{\pa M})$ is injective on $I$, since it has kernel
$\maK$.

Let $P \in I \subset \Psi^0_{\maW}(M,E)$. We can recover the
principal symbol of $P$ from its action on $M_0$ \cite{AIN, Horm,
LN} so we can assume $m < 0$. By replacing $P$ with a power of $P^*P$,
we can assume that $m < -n$. The Fourier transform (as in
Equation \eqref{ker.psi}) then allows us to recover the boundary
symbol of $P$ since for $x$ approaching the boundary, the exponential
map increases its radius of injectivity (so the cutoff $\chi$ will
affect less and less the kernel of the resulting operator). This shows
that $\pi_{M_0}$ is injective on $\overline{\Psi^0(\maG)}$. 
\end{proof}

In this case, let $ [\sigma_0(P)] \in K^0(TM_0)$ denote the
$K^0$-theory class associated to $P$, as in (\ref{k0 class comp sp}),
and $[\sigma_{full}(P)]:=[(\sigma_\pa(P),\sigma_0(P))]\in
K_1(\maC(\Omega))\cong K^1(\Omega)$ denote the class in $K^1$. As
before, let $Td(T_\CC M)$ denote the Todd class of the complexified
tangent bundle of $M$, and $\pi: \overline{T^*M}\to M$. Also, we
denote by $\pi_\Omega: \Omega= \partial(\overline{A}) \to M$ the
natural projection.  From Theorem \ref{ind thm comp spaces} it finally
follows:

\begin{thm}\label{ind thm lie mflds}
Let $(M,\maW)$ be an asymptotically commutative Lie manifold manifold
with Lie algebroid $A$, $\Omega := \partial(\overline{A})$, and let
$P \in \overline{\Psi_\maW^0(M;E)}$ be an elliptic operator with
$\sigma_\pa(P)$ invertible in $\maC(\overline{A}_{\pa M})$. Then,
\begin{equation*}
	\ind(P) = (-1)^n \ch_0[\tilde{\sigma}_{full}(P)] \pi^*Td(T_\CC
	M) [TM_0] =
	{(-1)^n} \ch_1[\sigma_{full}(P)] \pi_\Omega^*Td(T_\CC M)
	[\Omega],
\end{equation*}
where $[\tilde{\sigma}_{full}(P)] \in K^0(TM_0)$ is
defined using Lemma \ref{lemma.def.ps3}.
\end{thm}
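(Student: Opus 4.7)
The plan is to deduce Theorem \ref{ind thm lie mflds} as a direct specialization of Theorem \ref{ind thm comp spaces} to the asymptotically commutative setting, once we verify that all the structural hypotheses of that theorem are met by $(M,\maW)$.

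First, I would check that $\Omega = \partial(\overline{A})$ is a comparison space for $\Psi_{\maW}^0(M;E)$ in the sense of Section \ref{ss comp spaces}, i.e.\ that there is a surjective $*$-homomorphism $\sigma_{full} : \overline{\Psi_{\maW}^0(M;E)} \to \maC(\Omega)$ whose restriction to the interior cosphere bundle agrees with the usual principal symbol and whose kernel sits inside the compacts. This is essentially free: Proposition \ref{prop.Fred.comm} provides exactly the short exact sequence
\begin{equation*}
0 \longrightarrow \maK(M;E) \longrightarrow \overline{\Psi_{\maW}^0(M;E)} \xrightarrow{(\sigma_0,\sigma_\pa)} \maC(\Omega) \longrightarrow 0,
\end{equation*}
where the map $(\sigma_0,\sigma_\pa)$ is built from the principal symbol together with the boundary symbol \eqref{boundary.symb}, using the identification $\overline{\Psi^0(\maG_{\pa M})}\cong \maC(\overline{A}|_{\pa M})$ of Proposition \ref{comm indicial alg}. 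So the comparison space axiom holds verbatim with $\sigma_{full} = (\sigma_0,\sigma_\pa)$.

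Next I would translate the hypothesis of the theorem into full ellipticity in the sense of Section \ref{ss comp spaces}. Saying that $P$ is elliptic (invertibility of $\sigma_0(P)$ on $S^*A$) and that $\sigma_\pa(P)$ is invertible in $\maC(\overline{A}|_{\pa M})$ is exactly the statement that $\sigma_{full}(P) = (\sigma_0(P),\sigma_\pa(P)) \in \maC(\Omega)$ is invertible, which is full ellipticity for the comparison space $\Omega$. Hence Theorem \ref{ind thm comp spaces} applies to $P$, yielding Fredholmness together with the two index formulas
\begin{equation*}
\ind(P) = (-1)^n \ch_0[\tilde{\sigma}_{full}(P)]\,\pi^* Td(T_\CC M)\,[TM_0] = (-1)^n \ch_1[\sigma_{full}(P)]\,\pi_\Omega^* Td(T_\CC M)\,[\Omega],
\end{equation*}
with $[\tilde{\sigma}_{full}(P)] \in K^0(TM_0)$ the class given by Lemma \ref{lemma.def.ps3}.

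There is essentially no obstacle beyond bookkeeping: all the analytic content has been absorbed into Proposition \ref{prop.Fred.comm} (injectivity of $\pi_{M_0}$, identification of the indicial algebra, and the exact sequence) and Theorem \ref{ind thm comp spaces} (the homotopy to asymptotically multiplication operators plus the compatibility of the Chern character with the boundary map $\pa: K^1(\Omega)\to K^0(TM_0)$). The only point that merits a sentence of care is making sure the $\sigma_{full}$ from Proposition \ref{prop.Fred.comm} is literally the $\sigma_{full}$ of Section \ref{ss comp spaces} — and this is precisely what was recorded in the remark following Proposition \ref{comm indicial alg}, where the restriction-to-boundary map was identified with the total symbol via the Fourier transform along the fibres of $A|_{\pa M}$.
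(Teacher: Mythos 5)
Your proposal is correct and follows essentially the same route as the paper: the paper likewise establishes that $\Omega = \partial(\overline{A})$ is a comparison space via Proposition \ref{comm indicial alg} and the exact sequence of Proposition \ref{prop.Fred.comm}, identifies full ellipticity of $P$ with invertibility of $(\sigma_0(P),\sigma_\pa(P))$ in $\maC(\Omega)$, and then deduces the theorem directly from Theorem \ref{ind thm comp spaces}. Your remark about the boundary symbol being realized by the total symbol (via the fibrewise Fourier transform) matches the paper's own observation following Proposition \ref{comm indicial alg}.
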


Our main example of an asymptotically commutative Lie manifold
$(M, \maW)$ is obtained as follows. Let $(M, \maV)$ be a Lie manifold
and let $x_k$ be boundary defining functions of the hyperfaces of
$M$. Choose $a_k \in \NN = \{ 1, 2, \ldots \}$. Then, as in the
Equation \eqref{eq.def.W}, we introduce
\begin{equation*}
        \maW := f \maV , \mbox{ with } f:= \Pi x_k^{a_k},
\end{equation*}
is also a structural Lie algebra of vector fields, since it is closed
for Lie brackets, and a finitely generated, projective
$\maC^\infty(M)$--module. Hence $(M,\maW)$ is a Lie manifold that is
easily seen to be asymptotically commutative.

The previous result extends the known index formulas for the
scattering calculus on manifolds with boundary, where
$\maV_{sc}:= x\maV_b$, with $x$ is a boundary defining function and
$\maV_b$ is the Lie algebra of vector fields tangent to the boundary,
and for the double-edge calculus, where $\maV_{de}=x\maV_{e}$, with
$\maV_{e}$ the edge vector fields induced by a fibration of the
boundary \cite{HunsickerGrieser, Kottke, LMor2, MelroseScattering}. Moreover,
the index formula above can be proved in the same way considering
families of pseudodifferential operators over a compact base space $B$
(the index now takes values in $K^0(B)$) using a generalization of the
Atiyah-Singer index theorem for families of asymptotically
multiplication operators. In this sense, Theorem \ref{ind thm lie
mflds} yields the result in \cite{Kottke} for families of scattering
pseudodifferential operators.

In the next section, we will apply the index formula above to compute
the index of perturbed Dirac operators on \emph{general} Lie
manifolds.



\section{Perturbed Dirac operators}\label{sec3}

Throughout this section, we let $M_0$ be a non-compact, \emph{even
dimensional} manifold $M_0$, which, as before, is assumed to be the
interior of a Lie manfold $(M, \maV)$. We fix a set $\{x_k\}$ of
defining functions of $M$ and let
\begin{equation} \label{f}
                 f: = \Pi x_k^{a_k},\ a_k \in \NN,
\end{equation}
(so $a_k > 0$).  We consider in this section a Dirac operator $\Dir$ coupled with a potential $V$, that is, an operator of the form 
\begin{equation}\label{eq.def.T}
        T = \Dir + V := \Dir \hat \otimes 1 + 1 \hat \otimes V
\end{equation} 
on compactly supported sections of some vector bundles defined on
$M_0$. By a {\em potential} we shall always mean an odd, self-adjoint
endomorphism of a $\ZZ_2$-graded vector bundle over $M_0$. An operator
$T$ of this type with will be called a {\em Callias-type
operator}. (More precisely, $T$ is the closure of $T \hat \otimes 1 +
1 \hat\otimes V$.)  We assume the potential $V$ to be of the form
\begin{equation*}
        V := f^{-1} V_0 = \Pi x_k^{-a_k} V_0,
\end{equation*} 
where $V_0$ extends to a smooth function on $M$, {\em invertible} at
the boundary. In particular, the potential $V$ is \emph{unbounded}.

We apply the results of the previous section to give a cohomological
formula for the index of $T^+ := (\Dir + V)^+.$ The main point is to
reduce the calculation of the index of $T^+$ to the case of a Dirac
operator coupled with a {\em bounded potential} on the {
asymptotically commutative} Lie manifold $(M, \maW)$ defined by
$\maW:= f\maV$, and show that the index can be obtained from
Theorem \ref{ind thm lie mflds}.  More precisely, we shall show that
\begin{equation}\label{eq.equal.ind}
        \ind(T^+) = \ind(Q) \ \text{ for }\ Q := f^{1/2} T^+
        f^{1/2} \in \Psi^1_{\maW}(M; F_0, F_1 ), \ \maW := f \maV,
\end{equation}
for suitable vector bundles $F_0$ and $F_1$. We then {use} that 
\begin{equation*}
        P := Q(1 + Q^*Q)^{-1/2} \in \overline{\Psi^0_{\maW}(M; F_0,
        F_1)}
\end{equation*} also
satifies $\ind (P^+) = \ind(Q^+)$. Finally, we show that $\ind(P^+)$, and
hence also $\ind(T^+) = \ind(P^+)$, can be computed using
Theorem \ref{ind thm lie mflds}.


\subsection{Dirac and Callias operators\label{ssec.Callias}}
Let $W$ and $E$ be $\ZZ_2$-graded vector bundles over $M$. We endow
$W \otimes E$ with the usual grading and denote by $W\hat{\otimes}E$
the resulting $\ZZ_2$-graded vector bundle, namely,
\begin{equation*}
        (W\hat{\otimes}E)^+=( W^+\otimes E^+) \oplus (W^-\otimes
        E^-) \ \text{ and } \ (W\hat{\otimes}E)^-= (W^-\otimes
        E^+) \oplus (W^+\otimes E^-).
\end{equation*}
If $V \in \End(E)$ is an endomorphism, then it acts on $C^\infty(E)$
as a (pseudo)differential operator of order $0$.

\begin{defn}\label{callias.pdo}
An operator $T: \maC^\infty_c(M_0; W\hat{\otimes}
E) \to \maC^\infty_c(M_0; W\hat {\otimes} E)$ is said to be
a \emph{Callias-type pseudodifferential operator} on the Lie manifold
$(M,\maV)$ if
\begin{equation*}
  T := D + V := D \hat{\otimes} 1 + 1\hat{\otimes} V.
\end{equation*}
where $D\in \Psi^m_\maV(M,W)$, $m>0$, is an odd, symmetric, elliptic
operator and $V\in \End(E{\vert_{M_0}})$ is {odd and self-adjoint}
{and invertible outside a compact set.} We refer to $V$ as
a \textit{potential}. {We shall also assume our potential $V$ to be
invertible outside a compact subset of $M_0$.} The closure of an
operator of the form $T = D + V$ will also be called a {\em
Callias-type operator}.
\end{defn}

When $D$ is the (generalized) Dirac operator, these operators are also
called Dirac-Schr\"{o}dinger operators and were first considered by
Callias (in the odd dimensional Euclidean space \cite{Callias}). See
also \cite{bunke, bunkeSchick, FoxHaskell1, FH2} and references
therein for more results on index theory of Dirac-Schr\"{o}dinger and
Callias type operators on even-dimensional manifolds.

\begin{rmk}
On odd dimensional manifolds, the Callias-type operators are of the
form $\Dir + iV$, where $V$ is self-adjoint and invertible at
infinity.  See \cite{Anghel1, Anghel2, BottSee,BM, Callias, FH3,
Kottke, rade} for more on the index of Calias type operators in the
odd-case.
\end{rmk}

Recall that a symmetric (hence closable) operator $T$
is \emph{essentially self-adjoint} if its closure is self-adjoint,
that is, if $<{T}x,y>=<x,{T}y>$, for all
$x,y \in \maD(T)=\maD(T^*)$. (We shall always denote the minimal
closure of an operator by the same letter.)

In the following lemma, we assume that the potential $V_0$ extends to $M$,
in particular, it is bounded. (We will prove such a result for an 
unbounded potential in Section \ref{s.pot.unbdd}.)

\begin{lem} \label{lemma.sa} 
Let $D\in \Psi^m_\maV(M,W)$, $m>0$, be an odd, symmetric, elliptic
operator. Assume that $V_0$ extends to a smooth function on $M$, as
before, then the Callias-type operator $T =D +
V_0 \in \Psi^m_{\maV}(M_0; W \otimes E)$ is elliptic and essentially
self-adjoint on $\CIc(M_0; W\otimes E)$.
\end{lem}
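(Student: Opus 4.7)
The plan is to prove the two assertions separately: ellipticity, which is immediate from the principal symbol, and essential self-adjointness, which reduces to a bounded perturbation of the essentially self-adjoint operator $D\hat\otimes 1$.

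For ellipticity, I would observe that $V_0$ acts as multiplication by a smooth endomorphism, so $1\hat\otimes V_0$ has order $0$, while $D\hat\otimes 1$ has order $m>0$. Consequently
\begin{equation*}
        \sigma_m(T) \;=\; \sigma_m(D\hat\otimes 1) \;=\; \sigma_m(D)\otimes \mathrm{id}_E,
\end{equation*}
which is invertible on $S^*A$ because $D$ is elliptic. Hence $T\in \Psi^m_{\maV}(M_0;W\hat\otimes E)$ is elliptic in the Lie-manifold calculus. Symmetry of $T$ on $\CIc(M_0;W\hat\otimes E)$ is clear since $D$ is symmetric and $V_0$ is self-adjoint (and odd, so the two summands are both formally symmetric on the graded space).

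For essential self-adjointness, my plan is in two steps. First, I would invoke the fact that the operator $D\hat\otimes 1$, being an odd symmetric elliptic operator in $\Psi^m_{\maV}$ acting on sections of a vector bundle over the Lie manifold $(M,\maV)$, is essentially self-adjoint on $\CIc(M_0;W\hat\otimes E)$. This is a consequence of the completeness of any compatible metric on a Lie manifold (proved in \cite{ALNgeo}) together with the standard Chernoff-type argument, or alternatively can be read off from groupoid-calculus arguments in \cite{ALNpdo,LMN} that give boundedness of $D(1+D^*D)^{-1/2}$; in any case, this is the fundamental analytic input that the lemma depends on. Second, since $V_0$ extends smoothly to the compact manifold with corners $M$, it is a bounded self-adjoint endomorphism of $W\hat\otimes E$, and hence defines a bounded self-adjoint operator $B$ on $L^2(M_0;W\hat\otimes E)$.

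Now I would conclude with the classical fact that a symmetric operator plus a bounded self-adjoint operator is essentially self-adjoint if and only if the original one is: if $\overline{D\hat\otimes 1}$ is self-adjoint on its domain $\maD$, then $\overline{D\hat\otimes 1}+B$ is also self-adjoint on $\maD$ (sum of a self-adjoint and a bounded self-adjoint operator on the same domain), and it clearly extends $T$ defined on $\CIc(M_0;W\hat\otimes E)$. Conversely, any element of the minimal closure of $T$ differs from the minimal closure of $D\hat\otimes 1$ by $B$ applied to an $L^2$-section, so the two closures have the same domain, which must equal $\maD$. Therefore $T$ is essentially self-adjoint, completing the proof. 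The only non-routine ingredient is the essential self-adjointness of $D\hat\otimes 1$, which is where the Lie-manifold hypothesis is really used.
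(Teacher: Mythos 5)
Your proof is correct, but it takes a slightly longer route than the paper. The paper observes that, since $V_0$ extends smoothly to $M$, the whole operator $T=D+V_0$ is itself a symmetric elliptic element of $\Psi^m_{\maV}(M_0;W\otimes E)$ (with $\sigma_m(T)=\sigma_m(D)$), and then applies the essential self-adjointness theorem for symmetric elliptic operators of positive order in the Lie-manifold/groupoid calculus (\cite{LN}, Theorem 7.1, via the identification $\Psi^m_{\maV}(M_0)\cong\pi_{M_0}(\Psi^m(\maG))$) directly to $T$ — no perturbation argument is needed. You instead apply that analytic input only to $D\hat\otimes 1$ and then add $V_0$ back by the standard bounded symmetric perturbation (Kato--Rellich) argument; this is perfectly valid and has the mild advantage of isolating the potential, but it buys nothing here precisely because $V_0$ keeps $T$ inside the calculus. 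Two small caveats on your justification of the key input: a Chernoff-type finite-propagation argument based on completeness of the compatible metric is appropriate for first-order (Dirac-type) differential operators, but $D$ here is a general pseudodifferential operator of order $m>0$, for which finite propagation speed is not available, so that alternative does not literally apply; and boundedness of $D(1+D^*D)^{-1/2}$ by itself does not yield essential self-adjointness — the correct reference is the essential self-adjointness statement itself (\cite{LN}, Theorem 7.1), which is exactly what the paper invokes.
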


\begin{proof}
Ellipticity follows from $\sigma_{m}(T) = \sigma_{m}(D)$.  The fact
that $T$ is essentially self-adjoint follows, for instance,
from \cite{LN} (Theorem 7.1) which yields that, with $m>0$, a
(possibly unbounded) symmetric, elliptic operator in $\Psi^m_{\maV}(M;
W\otimes E)$ is essentially self-adjoint, identifying
$\Psi^m_{\maV}(M; W\otimes E) = \pi_{M_0}(\Psi^m(\maG; r^*(W\otimes
E)))$, as in Theorem \ref{psiV as psiG}.
\end{proof}

We shall work with unbounded Fredholm operators.  It will then be
useful to recall the way they are introduced. Let $T$ be a possibly
unbounded operator with domain $\maD(T)$ and codomain $H$.  {We shall
always replace $T$ by its closure, so assume $T$ is closed} and endow
$\maD(T)$ with the graph norm. Then $T$ is Fredholm if, by definition,
the induced bounded operator $T : \maD(T) \to H$ is Fredholm (in the
usual sense of having finite dimensional kernel and cokernel). In
particular, a pseudodifferential operator $T_1$ acting between
sections of $E_0$ with range sections of $E_1$ is Fredholm if, and
only if, $T_2 := T_1(1+T_1^*T_1)^{-1/2}$ is a Fredholm operator and,
in this case, $\ind(T_1)=\ind(T_2)$.

We are interested in
computing the index of
\begin{equation}\label{eq.def.T+}
        T^+ =(D + V)^+ : \maC^\infty_c(M_0; (W \hat{\otimes}
        E)^+) \to \maC^\infty_c(M_0; (W\hat{\otimes} E)^-),
\end{equation}
which we shall prove to be Fredholmness between suitable Sobolev
spaces.

Note that, with respect to the grading, we can write
\begin{equation}\label{D.graded}
        T^+ = \left(\begin{array}{cc} D^+\otimes 1 & -1 \otimes V^- \\
        1 \otimes V^+ & D^-\otimes 1 \end{array}\right).
\end{equation}
Most of our results work for general odd, elliptic, positive
pseudodifferential operators $D \in \Psi^m_{\maV}(M; E)$. However, for
simplicity and because this is the most useful case in applications,
we shall mainly be interested in the case when $D$ is a generalized Dirac
operator. Recall that, in any case, the Dirac operators generate all
classes in $K$-homology, so we can always assume $D$ to be a Dirac
operator.


\subsection{Dirac operators on Lie manifolds\label{ss unbdd pot lie}} 
We introduce here generalized Dirac operators on Lie manifolds
following \cite{ALNgeo}. Let $(M, \maV)$ be a {\em even} dimensional
Lie manifold endowed with a {\em compatible} metric $g$ on $M_0$ and
let $W$ be a Clifford module over $M$ endowed with an $A^*$-valued
connection $\nabla^W$ and a Clifford multiplication bundle map $c:
A\otimes W \to W$. Recall that a {\em compatible} metric on $M_0$ is a
metric on $TM_0$ that extends to $A \to M$. The restrictions of $W$,
$c$, and $\nabla^W$ to $M_0$ reduce to the classical notions of a
Clifford bundle together with an admissible connection \cite{Gilkey,
LawsonBook, MelroseScattering}.

\begin{defn} 
The \textit{generalized Dirac operator} $\Dir : \maC^\infty(M;W) \to
\maC^\infty(M;W)$ associated to $W$ is then defined as the composition
\begin{equation}\label{dir}
\begin{CD}
        \maC^\infty(M;W)@>\nabla^W>> \maC^\infty(M;W\otimes A^*)
        @>id\otimes \phi>> \maC^\infty(M;W\otimes
        A)@>c>> \maC^\infty(M;W),
\end{CD}
\end{equation}
where $\phi: A^*\to A$ is the isomorphism given by the metric.  
\end{defn}

Since both the Clifford multiplication $c$ and the $A^*$-valued
connection are $\maV$-differential operators, of order $0$ and $1$,
respectively, we have that $\Dir\in \Diff_{\maV}^1(M;W)$. The
principal symbol $\sigma_1(\Dir)\xi = ic(\xi)\in \End(W)$ is invertible
for any $\xi\neq 0$, and hence $\Dir$ is elliptic.  It follows from
classical results that $\Dir$ with domain $\maC^\infty_c(M_0;
W) \subset L^2(M_0; W)$ is essentially self-adjoint (\ie its closure
is self-adjoint), since $M_0$ is complete.

We can also define Dirac operators on groupoids: if $\maG$ is a
$d$-connected groupoid integrating $A = A(\maV)$  then we can consider the
Clifford module $r^*W$ and endow $\maG$ with an admissible connection
$\nabla^\maG\in \Diff(\maG; W, W\otimes A^*)$ such that
$\pi_{M_0}(\Dir^\maG)=\Dir$, where $\Dir^\maG$ is the associated Dirac
operator on $\maG$ (see \cite{LN} for details).

Assume now that $M$ is even-dimensional and $W$ is $\ZZ_2$-graded,
with the grading given by the chirality operator. Let also $E$ be an
Hermitian $\ZZ_2$-graded vector bundle over $M$ and $V\in\End(E)$ a
potential (so odd, self-adjoint). We are interested in computing the
index of
\begin{equation}
        T^+ =(\Dir + V)^+ : \maC^\infty_c(M_0; (W \otimes
        E)^+) \to \maC^\infty_c(M_0; (W\hat\otimes E)^-).
\end{equation}


\subsection{The case of bounded potentials}\label{s.pot.bdd}
Let $(M, \maW)$ be an asymptotically commutative Lie manifold. 
Recall that in this section we assume that $n$, the 
dimension of $M$, is even.

Let 
\begin{equation}\label{eq.def.Q.1}
        Q := D + V_0 \text{ with } D \in \Psi^m_{\maW}(M_0; E\otimes
        W),
\end{equation}
where $D$ is an elliptic, symmetric, odd pseudodifferential operator,
as in Definition \ref{callias.pdo}. Let $V_0$ be a bounded potential
on $M_0$ that extends to a smooth function on $M$ that is invertible
on $\pa M$ (so, in particular, $V_0$ is odd and symmetric). 
 It follows from
Lemma \ref{lemma.sa} that $Q$ is elliptic and essentially self-adjoint
on $\CIc(M_0)$.

We define the total symbol $K$-theory classes $\sigma_{full}(Q) \in
K^1(\Omega)$ and $\tilde{\sigma}_{full}(Q) \in K^0(TM_0)$ of $Q$ in a
similar way to the case of order zero symbols. First, recall that the
boundary symbol $\sigma_\pa:
\Psi_{\scriptsize\maW}^m(M_0;E) \to \maC({A}\vert_{\partial
M})$ is given by 
\begin{equation*}
        \sigma_\pa(P) = \sigma_m^{tot}(\maR_\pa(S))
        = \sigma_m^{tot}(S\vert_{\pa M}),
\end{equation*} with $\pi_{M_0}(S)=P$,
$\maR_\pa: \Psi^m(\maG)\to \Psi^m(\maG_{\pa M})$ is restriction to the
boundary and $\sigma_m^{tot}(S_x)$, $x\in \pa M$, is the total symbol
of the operator $S_x$ on $A_x$ (including lower order terms).

\begin{lem}\label{lemma.fe}
Let $Q$ be as in Equation \eqref{eq.def.Q.1} and $P :=
Q(1+Q^2)^{-1/2}$. Then $P \in \overline{\Psi^0_{\maW}(M_0; W \otimes
E)}$ is fully elliptic, in the sense that its principal symbol
$\sigma_0(P)$ and the boundary symbol $\sigma_{\pa}(P)$, defined by
continuity, are invertible.
\end{lem}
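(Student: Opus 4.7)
The plan is to verify the three assertions — membership in the closed algebra, invertibility of the principal symbol, and invertibility of the boundary symbol — in that order.

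First, the membership $P \in \overline{\Psi^0_{\maW}(M_0; W\otimes E)}$ is an immediate application of Lemma \ref{lemma.order.red}. Indeed, $Q = D + V_0 \in \Psi^m_\maW(M_0; W\otimes E)$ with $m>0$, and since $Q$ is essentially self-adjoint by Lemma \ref{lemma.sa}, $Q^*Q = Q^2$, so $P = Q(1+Q^2)^{-1/2}$ is exactly the order-reduction operator treated in Lemma \ref{lemma.order.red}.

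Second, for the invertibility of $\sigma_0(P)$: since $V_0$ has order $0$ and $m>0$, the principal symbol satisfies $\sigma_m(Q) = \sigma_m(D)$ on $S^*A$. By the remarks following Lemma \ref{lemma.order.red}, $\sigma_0(P)$ is homotopic through invertibles to $\sigma_m(Q)$ on $S^*A$, and one has pointwise the explicit formula $\sigma_0(P) = \sigma_m(D)\,(1+\sigma_m(D)^*\sigma_m(D))^{-1/2}$ (suitably homogenized). Ellipticity of $D$ yields invertibility of $\sigma_m(D)$ on $S^*A$, and hence of $\sigma_0(P)$.

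Third, and this is the main content of the lemma, one must show $\sigma_\pa(P)$ is invertible in $\maC(\overline{A}|_{\pa M})$. The key observation is that in the graded tensor product $W\hat\otimes E$, the sign rule forces the odd operators $D\hat\otimes 1$ and $1\hat\otimes V_0$ to anticommute: $(D\hat\otimes 1)(1\hat\otimes V_0) + (1\hat\otimes V_0)(D\hat\otimes 1) = 0$. Consequently $Q^2 = D^2\hat\otimes 1 + 1\hat\otimes V_0^2$. Applying $\sigma_\pa$, and using that on an asymptotically commutative Lie manifold the boundary symbol map is multiplicative with values in the commutative algebra of total symbols on the fibres of $A|_{\pa M}$ (Proposition \ref{comm indicial alg}), the anticommutation descends to give
\begin{equation*}
        \sigma_\pa(Q)(x,\xi)^2 = \sigma_\pa(D)(x,\xi)^2 + V_0(x)^2, \quad (x,\xi) \in A|_{\pa M}.
\end{equation*}
Since $D$ is symmetric, $\sigma_\pa(D)(x,\xi)$ is self-adjoint and its square is non-negative; since $V_0$ is self-adjoint and invertible on $\pa M$, $V_0(x)^2$ is strictly positive. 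Hence $\sigma_\pa(Q)^2$ is strictly positive on the finite part of $\overline{A}|_{\pa M}$, so the formula
\begin{equation*}
        \sigma_\pa(P) = \sigma_\pa(Q)\,(1 + \sigma_\pa(Q)^2)^{-1/2}
\end{equation*}
from the discussion after Lemma \ref{lemma.order.red} produces an invertible function on $A|_{\pa M}$. At the sphere at infinity $S^*A|_{\pa M} \subset \partial\overline{A}|_{\pa M}$, the compatibility encoded in $\maC(\Omega)$ (Section \ref{ss ind form lie}) forces $\sigma_\pa(P)|_{S^*A|_{\pa M}} = \sigma_0(P)|_{S^*A|_{\pa M}}$, which is invertible by the previous step. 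Since $\overline{A}|_{\pa M}$ is compact, pointwise invertibility is invertibility in $\maC(\overline{A}|_{\pa M})$.

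The main technical obstacle is the third step: one must rely on the graded-tensor anticommutation to kill the cross terms in $\sigma_\pa(Q)^2$, and then verify that positivity propagates cleanly to the fibrewise sphere at infinity through the comparison-space compatibility. The first two steps are essentially symbolic bookkeeping built on Lemma \ref{lemma.order.red}.
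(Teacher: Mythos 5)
Your proposal is correct and follows essentially the same route as the paper: membership in the closure via Lemma \ref{lemma.order.red}, ellipticity of $P$ from that of $Q$ (since $\sigma_m(Q)=\sigma_m(D)$), and invertibility of the boundary symbol reduced to the fibrewise indicial symbols $\sigma^{tot}(S_x)\hat\otimes 1+1\hat\otimes V_0(x)$, which the paper disposes of by the parenthetical remark that $\alpha\hat\otimes 1+1\hat\otimes\beta$ is invertible when $\beta$ is. Your anticommutation-and-positivity computation of $\sigma_\pa(Q)^2$ and the check at the sphere at infinity simply spell out the justification the paper leaves implicit, so there is no substantive difference in method.
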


\begin{proof} 
It follows from Lemma
\ref{lemma.order.red} that $P \in \overline{\Psi^0_{\maW}(M_0;
W \otimes E)}$. We have that $P$ is elliptic, since $Q$ is.  To
understand the boundary operators, since $\maW$ is commutative at the
boundary, hence $\maG_{\pa M} = A_{\pa M}$ is amenable, we only need
invertibility on fibres $\maG_x=A_x$, $x\in \pa M$ (see the remark
after Theorem \ref{prop.fredholm.lie}).  {Let $S\in \Psi^m(\maG;
r^*W)$ be such that $\pi_{M_0}(S)=D$}. Therefore, we need to look at
the symbol of the operators {$S_x$} coupled with the constant
potential $V_0(x)$ acting on the fiber $A_x$, for each $x \in \pa
M$. The invertibility of the boundary indicial operator $\sigma_\pa(D
+ V_0)_x(\xi) = \sigma^{tot}(S_x) \hat{\otimes}1 + 1\hat{\otimes}
V_0(x)$, $\xi \in A_x$ then follows from the fact that $V_0(x)$ is
invertible for each $x \in \pa M$ (noting that $\alpha\hat{\otimes}1 +
1\hat{\otimes} \beta\in \End(W_x\hat\otimes E_x)$ is invertible if
$\alpha$ or $\beta$ are invertible.)
\end{proof}

Note that when $D=\Dir$ is a Dirac operator on $(M, \maW)$ then, using
the notation as above, $S_x=\Dir_x$ is a Dirac operator on $A_x$ and
$\sigma_\pa(\Dir + V)_x(\xi)= ic(\xi) \hat{\otimes} 1 +
1 \hat{\otimes} V_0(x)$. (This is due to the fact that the restriction
of a Dirac operator to the boundary is again a Dirac
operator \cite{LN}.)

The following lemma provides the definitions of the total symbol
$K$-theory classes $\sigma_{full}(Q) \in K^1(\Omega)$ and
$\tilde{\sigma}_{full}(Q) \in K^0(TM_0)$.  Let us introduce the
$K$-theory class $[V_0]$ defined by the endomorphism $V_0$ as usual
\cite{AtiyahBook, KaroubiBook}
\begin{equation*}
        [V_0]:= [E^+, E^-, V_0] \in K^0(M_0) = K^0(M;\pa M)\subset
        K^0(M).
\end{equation*}
Recall that $\Omega := \partial(\overline{A}) =
(S^*A) \cup \overline{A} \vert_{\partial M}$ (as in Subsection \ref{ss
ind form lie}).

\begin{lem}\label{lemma.symb.Q}
Let $Q$ and $P$ be as in Lemma \ref{lemma.fe} and define
$[\sigma_{full}(Q)] := [\sigma_{full}(P)] \in K^1(\Omega)$ and
$[\tilde{\sigma}_{full}(Q)] := [\tilde{\sigma}_{full}(P)] \in
K^0(TM_0)$.  Then
\begin{equation*}
        \pa [\sigma_{full}(Q)] = [\tilde{\sigma}_{full}(Q)]
\end{equation*} 
and $[\tilde{\sigma}_{full}(Q)]$ can be represented by the
endomorphism $\sigma_m(D) \hat\otimes 1 + 1 \hat\otimes V_0$.  In
particular,
\begin{equation*}
        [\tilde{\sigma}_{full}(Q)] = [\sigma_m(D)] \otimes \pi^*[V_0],
\end{equation*} 
where $[\sigma_m(D)] \in K^0(TM)$ and $[V_0] \in K^0(M, \pa M)$ are
the classes defined by the corresponding morphisms and $\pi : TM \to
M$ is the natural projection.
\end{lem}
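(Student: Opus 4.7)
The first equality $\pa[\sigma_{full}(Q)] = [\tilde{\sigma}_{full}(Q)]$ is, by the very definitions of the two symbol classes (as transported from the order zero operator $P = Q(1+Q^2)^{-1/2}$), just a restatement of Lemma \ref{lemma.def.ps3} applied to $P$. So the first sentence requires no independent argument beyond observing that $P \in \overline{\Psi^0_\maW(M_0; W\hat\otimes E)}$ is fully elliptic by Lemma \ref{lemma.fe}.

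The second statement is the main content. I would first identify the relevant symbol data of $Q$: since $V_0$ has order $0$, the principal symbol of $Q$ on $S^*A$ equals $\sigma_m(D)$, while at the boundary the indicial symbol equals $\sigma_m^{tot}(S_x)\,\hat\otimes\,1 + 1\,\hat\otimes\, V_0(x)$ for $x \in \pa M$, where $\pi_{M_0}(S)=D$. Passing from $Q$ to $P = Q(1+Q^2)^{-1/2}$ applies the map $t \mapsto t(1+t^2)^{-1/2}$ fiberwise to these symbols. This map is a $K$-theoretic homotopy equivalence from the set of invertible self-adjoint matrices to itself (via the straight-line homotopy through invertibles $t(1 + s + (1-s)t^2)^{-1/2}$, $s\in[0,1]$), so $[\sigma_{full}(P)]$ is represented, in $K^1(\Omega)$, by exactly the pair $(\sigma_m(D),\ \sigma_m^{tot}(S_x)\hat\otimes 1 + 1\hat\otimes V_0)$, extended in the natural way.

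Next, by the construction of $[\tilde{\sigma}_{full}(P)]$ in Lemma \ref{lemma.def.ps3}, this class is obtained by choosing any continuous endomorphism $p$ of $\pi^*(W\hat\otimes E)$ on $\overline{A}$ restricting to these symbol data on $\pa\overline{A} = \Omega$ and being invertible outside a compact subset of $TM_0$. The crucial step is to \emph{exhibit} such an extension explicitly: I take
\[
        p(x,\xi) \;:=\; \sigma_m(D)(x,\xi)\,\hat{\otimes}\,1 \;+\; 1\,\hat{\otimes}\,V_0(x)
\]
on all of $\overline{A}$. Since $\sigma_m(D)$ and $V_0$ are odd self-adjoint, $p$ is odd self-adjoint with $p^2 = \sigma_m(D)^2 \hat\otimes 1 + 1\hat\otimes V_0^2$; thus $p$ is invertible wherever either $\sigma_m(D)(x,\xi)$ is invertible (i.e.\ $\xi\neq 0$) or $V_0(x)$ is invertible (i.e.\ outside a compact set $K\subset M_0$). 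Hence $p$ is invertible outside the compact set $\{(x,0) : x\in K\}$, and it restricts to the previously identified symbol data on $\pa\overline{A}$. This proves the representation claim.

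Finally, the identification $[\tilde{\sigma}_{full}(Q)] = [\sigma_m(D)]\otimes\pi^*[V_0]$ is then exactly the standard formula for the external (Kasparov) product of two $K$-theory classes given by odd self-adjoint endomorphisms: the product of $[W^+,W^-,\sigma_m(D)] \in K^0(TM)$ and $[E^+,E^-,V_0]\in K^0(M_0)=K^0(M,\pa M)$ is represented on the graded tensor bundle $\pi^*(W\hat\otimes E)$ by $\sigma_m(D)\hat\otimes 1 + 1\hat\otimes V_0$, see \cite{AtiyahBook, KaroubiBook}. Combined with the previous step, this completes the proof.

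The main obstacle is the middle step, namely verifying that the explicit extension $p$ above actually represents $\pa[\sigma_{full}(Q)]$ under the $K$-theoretic connecting map of the pair $(\overline{A},\Omega)$. This requires a careful comparison of the two presentations of $K^0(TM_0)$ — via compactly supported bundle maps on $\overline{A}$ and via the $\pa$-map from $K^1(\Omega)$ — but once the extension is written down, the verification is routine since both presentations agree with the standard Karoubi description.
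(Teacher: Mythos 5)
Your proposal is correct and follows essentially the same route as the paper: both arguments reduce the claim to Lemma \ref{lemma.def.ps3}, transport the symbol data of $Q$ to $P=Q(1+Q^2)^{-1/2}$ via the function $t\mapsto t(1+t^2)^{-1/2}$, exhibit the explicit extension $p=\sigma_m(D)\hat\otimes 1+1\hat\otimes V_0$ (invertible off a compact set because $p^2=\sigma_m(D)^2\hat\otimes 1+1\hat\otimes V_0^2$), and conclude with the standard external-product description of $[\sigma_m(D)]\otimes\pi^*[V_0]$. The only small difference is that the paper chooses the global representative $\sigma_m\in S^m(A^*)$ so that it equals the \emph{total} symbol of $D$ on $A^*\vert_{\pa M}$, making the extension restrict literally to the boundary data, whereas your $p$ only agrees with the indicial symbol up to a homotopy through invertibles (harmless, since $1\hat\otimes V_0(x)$ is invertible for $x\in\pa M$), so the resulting $K$-theory class is the same.
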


\begin{proof} 
The relation $\pa [\sigma_{full}(Q)] = [\tilde{\sigma}_{full}(Q)]$
follows from definitions and from Lemma \ref{lemma.def.ps3}.  Let us
choose a smooth function $\sigma_m  \in S^m(A^*)$ such that $\sigma_m$
represents $\sigma_m(D)$ and on $A^*\vert_{\pa M}$ it is equal to the
total symbol of $D$. Let then $p = \sigma_m \hat\otimes 1 +
1 \hat\otimes V_0 \in \CI(A^*) = \CI(A)$, where we have used a fixed
metric on $A$ to identify $A$ with $A^*$, as before. From
Equation \eqref{D.graded} we have that
\begin{equation*}
        \sigma_0(P) = \sigma_m(Q)/\sqrt{1 + \sigma_m(Q)^2}
        = \sigma_m(D)/\sqrt{1 + \sigma_m(D)^2} = p/\sqrt{1 + p^2} \in
        S^0(A^*)/S^{-1}(A^*).
\end{equation*}
On the other hand, at the boundary, we have
\begin{equation*}
        \sigma_\pa(P) = \sigma_\pa(Q)/\sqrt{1 + \sigma_\pa(Q)^2} =
        p/\sqrt{1 + p^2}.
\end{equation*}
Therefore, $\sigma_{full}(P) = p/\sqrt{1 + p^2}$ on $\Omega$. Hence
the $K$-theory class $[\tilde{\sigma}_{full}(P)]$ is obtained from the
endomorphism $p/\sqrt{1 + p^2}$ defined on $TM_0$, which obviously
extends $p/\sqrt{1 + p^2}$ from $\Omega$ to the whole of
$\overline{A} \supset TM_0$. We obtain that the endomorphism $p$, and hence also $\sigma_m(D) \hat\otimes 1 + 1 \hat\otimes V_0$,  represents
$[\tilde{\sigma}_{full}(Q)]$.

{F}rom the definition of tensor product in $K$-theory we have that
\begin{equation*}
        [\sigma_{m}(D)] \otimes \pi^*[V_0] = [\pi^*(W\hat\otimes
        E)^+, \pi^*(W\hat\otimes E)^-, \sigma_{m}(D)\hat\otimes 1 \oplus
        1 \hat \otimes V_0]
\end{equation*}
where
\begin{equation*}
        \sigma_{m}(D)\hat\otimes 1 \oplus 1\hat\otimes V_0
        = \left(\begin{array}{cc} \sigma_m(D^+)\otimes 1 & -1 \otimes
        V_0^- \\
        1 \otimes V_0^+ & \sigma_m(D^-)\otimes
        1 \end{array}\right).
\end{equation*}
It follows that the $K$-theory class $[\tilde{\sigma}_{full}(Q)]$,
where $Q=D \hat{\otimes}1 + 1 \hat{\otimes} V_0$, is represented by
the same morphism as $[\sigma_m(D)] \otimes \pi^*[V_0]$. So these two
classes are equal.
\end{proof}

We shall need the Sobolev spaces $H_{\maW}^m(M_0)$ defined by $\maW$
(more precisely by the metric determined by $\maW$ \cite{ALNpdo,
AIN}).
\begin{equation} 
        H_{\maW}^m(M_0) := \{u\in L^2(M),\ Du\in L^2(M_0) \text{ for
        all } D\in \Diff^m_\maW(M_0)\}.
\end{equation}
The space $H_{\maW}^m(M_0)$ is the domain of any elliptic
pseudodifferential operator in $\Psi_{\maW}^m(M_0)$, $m >0$, acting
on $L^2(M_0)$. For $m < 0$ we use duality.

We now show that $\ind(Q^+)$ can be computed using Theorem \ref{ind
thm lie mflds}.

\begin{thm}\label{thm.b.potential} 
Let $Q = D + V_0$ be a Callias-type pseudodifferential operator with a bounded
potential $V_0$ as in Lemmas \ref{lemma.fe} and \ref{lemma.symb.Q}.
In particular, we assume that $V_0$ is a smooth potential on $M$
that is invertible on $\pa M$. Then $Q^+$ is Fredholm and
\begin{multline*}
	\ind(Q^+) = \ch_0[\tilde{\sigma}_{full} (Q^+)] \pi^*Td(T_\CC
	M) [TM_0] = \ch_1[\sigma_{full}(Q^+)] \pi_\Omega^*Td(T_\CC M)
	[\Omega]\\
	 = \ch_0[\sigma_{m}(D^+)]\ch_0 {\pi^*} [V_0] \pi^*Td(T_\CC
	 M) [TM_0].
\end{multline*} 
\end{thm}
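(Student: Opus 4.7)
The plan is to reduce the statement to Theorem \ref{ind thm lie mflds} by passing to the order-zero operator $P := Q(1+Q^2)^{-1/2}$, and then to convert the abstract $K$-theoretic index formula into the explicit product form using Lemma \ref{lemma.symb.Q} together with multiplicativity of the Chern character.

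First I would argue that $Q^+$ is Fredholm. By Lemma \ref{lemma.order.red}, the order-zero reduction $P$ lies in $\overline{\Psi^0_{\maW}(M_0; W \otimes E)}$, and Lemma \ref{lemma.fe} shows that $P$ is fully elliptic, in the sense that both $\sigma_0(P)$ and $\sigma_\pa(P)$ are invertible. Proposition \ref{prop.Fred.comm} then gives that $P$ is Fredholm on $L^2(M_0; W\otimes E)$. Since $Q$ is essentially self-adjoint on $\CIc(M_0; W\otimes E)$ by Lemma \ref{lemma.sa} and $P$ arises from $Q$ by functional calculus, $Q^+$ is Fredholm as an unbounded operator on its natural domain $H^m_{\maW}(M_0; (W\hat\otimes E)^+)$ with $\ind(Q^+) = \ind(P^+)$.

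Next I would apply Theorem \ref{ind thm lie mflds} directly to $P^+ \in \overline{\Psi^0_{\maW}(M_0; (W\hat\otimes E)^+, (W\hat\otimes E)^-)}$, yielding
\begin{equation*}
   \ind(P^+) = (-1)^n \ch_0[\tilde{\sigma}_{full}(P^+)] \pi^* Td(T_\CC M) [TM_0] = (-1)^n \ch_1[\sigma_{full}(P^+)] \pi_\Omega^* Td(T_\CC M) [\Omega].
\end{equation*}
By the very definition adopted in Lemma \ref{lemma.symb.Q}, $[\tilde{\sigma}_{full}(Q^+)] = [\tilde{\sigma}_{full}(P^+)]$ and $[\sigma_{full}(Q^+)] = [\sigma_{full}(P^+)]$. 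Since $n$ is assumed even in this section, the sign $(-1)^n$ disappears, producing the first two equalities of the theorem.

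For the final equality I would invoke Lemma \ref{lemma.symb.Q} once more, which identifies $[\tilde{\sigma}_{full}(Q^+)] = [\sigma_m(D^+)] \otimes \pi^*[V_0]$ in $K^0(TM_0)$. Multiplicativity of the Chern character over the external tensor product in $K$-theory then gives $\ch_0[\tilde{\sigma}_{full}(Q^+)] = \ch_0[\sigma_m(D^+)] \cdot \pi^*\ch_0[V_0]$, and substitution into the first cohomological expression produces the stated product formula. The main obstacle is bookkeeping of the $\ZZ_2$-gradings: one must verify that the graded tensor product of symbols $\sigma_m(D)\hat\otimes 1 + 1\hat\otimes V_0$ indeed represents the external product class $[\sigma_m(D^+)] \otimes \pi^*[V_0]$, and that the identification $\ind(Q^+) = \ind(P^+)$ between an unbounded Fredholm operator and its bounded functional-calculus transform is legitimate on the correct Sobolev domain. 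No new analysis is required beyond what is already developed in Sections \ref{sec1} and \ref{sec2}.
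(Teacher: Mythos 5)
Your proof is correct and follows essentially the same route as the paper: pass to $P = Q(1+Q^2)^{-1/2}$, use Lemma \ref{lemma.order.red} and Lemma \ref{lemma.fe} for full ellipticity and Fredholmness, identify $\ind(Q^+)=\ind(P^+)$ via the unbounded-operator convention already set up in Section \ref{ssec.Callias}, apply Theorem \ref{ind thm lie mflds}, and conclude with Lemma \ref{lemma.symb.Q} plus multiplicativity of the Chern character. Your explicit remarks on the vanishing of the sign $(-1)^n$ for $n$ even and on the grading bookkeeping are points the paper leaves implicit, but they introduce no new content.
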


\begin{proof} 
Let $P := Q(1+Q^*Q)^{-1/2}$, as before. Then
$P \in \overline{\Psi^0_{\maW}(M_0; W \otimes E)}$ is fully elliptic,
by the previous lemma (Lemma \ref{lemma.fe}). Hence $P$ is Fredholm by
Proposition \ref{prop.fredholm.lie}.
\begin{multline*}\label{eq.mult.many}
	\ind(Q^+) = \ind(P^+) \\
        = \ch_0[\tilde{\sigma}_{full}(P^+)]_0 Td(T_\CC M_0) [TM_0]
	= \ch_1[\sigma_{full}(P^+)] \pi^*Td(T_\CC) [\Omega]\\
        = \ch_0[\tilde{\sigma}_{full}(Q^+)] Td(T_\CC M_0) [TM_0]
	= \ch_1[\sigma_{full}(Q^+)] Td(T_\CC) [\Omega],\\
	 = \ch_0[\sigma_{m}(D^+)]\ch_0 {\pi^*} [V_0] \pi^*Td(T_\CC
	 M) [TM_0].
\end{multline*} 
by Theorem \ref{ind thm lie mflds} applied to $P^+$ and Lemma
\ref{lemma.symb.Q}.
\end{proof}

We are mainly interested in the case when
\begin{equation}\label{bdd pot}
       Q = \Dir + V_0 := \Dir \hat{\otimes} 1 + 1 \hat{\otimes} V_0,
\end{equation}
where $\Dir$ is a Dirac operator acting on the sections of some
Clifford bundle $W$. As before, we assume $V_0$ is potential (\ie an
odd, self-adjoint, endomorphism of a $\ZZ_2$-graded bundle $E$) that
is invertible outside a compact subset of $M_0$ such that $V_0$
extends smoothly to $M$, to be invertible at $\pa M$. In particular
$Q\in \Psi^1_{\maW}(M_0; W \otimes E)$.

To get an even more explicit formula for the index of coupled Dirac
operators $\Dir + V_0$, let us now that $M$ has a spin$^c$-structure,
with canonical spin$^c$-bunde $S$ and associated Dirac operator
$\Dir_S$. In particular, $M$ is oriented, and we let $[M] \in
{H_n(M, \pa M) }$ denote its fundamental class.  Let $W =
S \hat \otimes F$, with $F$ a complex vector bundle over $M$. Then
$\Dir_F := \Dir_{S} \otimes F$ is \emph{the Dirac operator twisted
with $F$.}

\begin{cor}\label{cor.tw.dirac} Let $\Dir_F$ be the Dirac operator
twisted with $F$ and $Q = \Dir_F + V_0$ be the perturbed twisted Dirac
operator associated to $V_0$, where $V_0$ is a bounded potential
invertible at $\pa M$ on an asymptotically commutative spin$^c$ Lie
manifold $(M, \maW)$. Then $Q^+$ is Fredholm and
\begin{equation*}	
      \ind(Q^+) = {\hat A(M) \ch_0 ([F \otimes V_0] ) [M]}.
\end{equation*}
\end{cor}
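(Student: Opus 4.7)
The plan is to reduce Corollary \ref{cor.tw.dirac} to Theorem \ref{thm.b.potential} via the standard topological identities used in Atiyah-Singer for twisted spin$^c$ Dirac operators. First I would observe that $\Dir_F$ is a first-order, symmetric, elliptic, odd operator in $\Psi^1_{\maW}(M_0; S \hat\otimes F \hat\otimes E)$ (by the discussion in \S\ref{ss unbdd pot lie}, as $(M,\maW)$ is a Lie manifold), and that $V_0$ is a bounded potential, smooth on $M$ and invertible on $\partial M$. The hypotheses of Theorem \ref{thm.b.potential} are thus satisfied and it yields the Fredholmness of $Q^+$ together with
\[
        \ind(Q^+) = \ch_0[\sigma_1(\Dir_F^+)] \cdot \pi^* \ch_0[V_0] \cdot \pi^* Td(T_\CC M) [TM_0].
\]

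Next, I would invoke the multiplicativity of principal symbols under twisting: since $\Dir_F = \Dir_S \otimes F$, we have $[\sigma_1(\Dir_F^+)] = [\sigma_1(\Dir_S^+)] \cdot \pi^*[F]$ in $K^0(TM)$. Combining this with the ring structure of the Chern character and the identity $\ch(F) \cdot \ch_0[V_0] = \ch_0[F \otimes V_0]$ in $H^{2*}(M, \partial M)$ rewrites the formula as
\[
        \ind(Q^+) = \ch_0[\sigma_1(\Dir_S^+)] \cdot \pi^* \ch_0[F \otimes V_0] \cdot \pi^* Td(T_\CC M) [TM_0].
\]

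The main step is then the classical Atiyah-Singer topological identity for the spin$^c$ Dirac operator: the principal symbol class of $\Dir_S^+$ represents the $K$-theoretic Thom class of $TM$, and combining its Chern character with $Td(T_\CC M)$ produces $\pi^*\hat{A}(M)$ times the cohomological Thom class of $TM$. Pairing with the fundamental class $[TM_0]$ collapses via the Thom isomorphism to evaluation on $[M]$, giving
\[
        \ind(Q^+) = \hat{A}(M) \ch_0[F \otimes V_0][M]
\]
as claimed. The main (and really the only nontrivial) obstacle is correctly invoking this last topological identity, including the convention that absorbs the spin$^c$ correction $e^{c_1(L)/2}$ into $\hat{A}$; but this is a classical computation that is independent of the Lie structure at infinity and is carried out in standard references such as \cite{a-sI, LawsonBook}.
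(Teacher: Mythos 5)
Your proposal is correct and follows essentially the same route as the paper: apply Theorem \ref{thm.b.potential} to $Q=\Dir_F+V_0$ and then use the classical fibre-integration (Thom isomorphism) identity $p_!\,\ch_0(\sigma(\Dir_F^+))\,Td(T_\CC M)=\hat A(M)\ch_0[F]$ to collapse the pairing with $[TM_0]$ to a pairing with $[M]$. Splitting the symbol class as $[\sigma_1(\Dir_S^+)]\cdot\pi^*[F]$ before invoking that identity is only a cosmetic rearrangement of the same classical computation cited in \cite{LawsonBook}.
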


\begin{proof}
It is known classically that
\begin{equation*}
      p_!  \ch_0(\sigma(\Dir_F^+)) Td(T_\CC M) =\hat A(M)  \ch_0[F],
\end{equation*}
where $p_! $ is integration over the fibre and $\hat A(M)\in H^*(M)$
is the $\hat{A}$-genus of $M$ (see \cite{LawsonBook}). The result
then follows right away from Theorem \ref{thm.b.potential}.
\end{proof}



\subsection{The case of unbounded potentials}\label{s.pot.unbdd}
In this subsection, we are back to a general (even-dimensional) Lie
manifold $(M, \maV)$. Let 
\begin{equation}
\label{unbdd.pot}  
    T := \Dir + V := \Dir \hat{\otimes} 1 + 1 \hat{\otimes} V,
\end{equation} 
where $\Dir\in \Diff^1_\maV(M_0; W)$ is a Dirac operator associated to
$\maV$.  We will consider here
\textit{unbounded potentials}, in that we assume moreover that,
on $M_0$,
\begin{equation}\label{pot.unbdd}
  V = f^{-1}V_0, \quad f:=\Pi x_k^{a_k},
\end{equation}
where $V_0$ is bounded and it extends to a smooth function on $M$ that
is invertible on $\pa M$ (at infinity) and $x_k$ are boundary defining
functions of the hyperfaces of $M$ with $a_k\in \NN = \{ 1,
2, \ldots \}$. This section contains the hard analysis needed
for our main result.

Our first goal is to show that $T$ is essentially self-adjoint with
domain a suitable weighted Sobolev space. We want to prove a formula
for the index of $T^+ = (\Dir + V)^+$.  Our strategy is to reduce this
problem to a question on operators with bounded potential by writing
\begin{equation}\label{eq.def.Q}
  T = f^{-1/2}Qf^{-1/2}, \quad \mbox { with }\ Q:=f^{1/2}\Dir f^{1/2} +
    V_0.
\end{equation}
In fact, let $\maW:= f\maV$ and let $g$ be the given metric compatible
with $\maV$. Then $g_0 := f^{-2}g$ is a metric compatible with $\maW$
and hence
\begin{equation*}
        \Dir_\maW := f^{1/2}\Dir f^{1/2} 
\end{equation*} 
is the Dirac operator associated to the Lie manifold structure defined
by $\maW$ and metric $g_0$ \cite{Baer1, Baer2, Hitchin, LawsonBook,
NistorDiracEquiv}. Actually, to identify $\Dir_{\maW}$ with
$f^{1/2}\Dir f^{1/2}$, we need to rescale the volume forms also, a
fact that we ignore throughout, in order to simplify the notation.

We then have that $Q \in{\Diff^1_{\scriptsize\maW}}(M; W\otimes E)$ is
a Callias-type Dirac operator on $(M,\maW)$ with a bounded, invertible
potential. In particular, it is elliptic and essentially self-adjoint
on $\maC_c^\infty(M_0).$ As before, we still denote its self-adjoint
closure by $Q$.

We now define \emph{weighted Sobolev spaces} defined by $\maW$
\begin{equation}
\label{def.weighted.sobolev}
\maK_a^m(M_0) : = f^{a}H_{\maW}^m(M_0),
\end{equation}
where $a\in \RR$ {and $f = \prod x_k^{a_k}$, as before}. If $E \to M$
is a smooth vector bundle, then the spaces $\maK_{a}^m(M_0; E)$ are
defined similarly. We remark that {all} the weighted Sobolev spaces
used below are with respect to $\maW$. (One can check that
$\maK_{a}^m(M_0; E; \maV)=\maK_{a-n/2}^m(M_0; E;\maW)$.)  We have the
following elliptic regularity result from \cite{ALNpdo}.

\begin{thm}\label{thm.reg}
Assume that $Q_0 \in \Psi_{\maW}^k(M_0;E)$ is elliptic and $h
\in \maK_a^s(M_0; E)$ is such that $Q_0h \in \maK_a^{m-k}(M_0; E)$. 
Then $h \in \maK_a^m(M_0; E)$.
\end{thm}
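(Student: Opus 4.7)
The plan is to reduce the weighted elliptic regularity statement to the unweighted one (i.e.\ the case $a=0$) by conjugating with $f^a$, and then to apply the standard parametrix construction for the calculus $\Psi^\ast_{\maW}$.

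Set $u := f^{-a} h$ and $\tilde Q := f^{-a} Q_0 f^a$. By the very definition of $\maK^\ast_a = f^a H^\ast_{\maW}$, we have $h \in \maK_a^s(M_0;E)$ if and only if $u \in H_{\maW}^s(M_0;E)$, and the identity $Q_0 h = f^a \tilde Q u$ translates the hypothesis $Q_0 h \in \maK_a^{m-k}$ into $\tilde Q u \in H_{\maW}^{m-k}$. So the theorem reduces to showing $u \in H_{\maW}^m$, which is standard interior elliptic regularity on the Lie manifold $(M,\maW)$ provided we know that $\tilde Q$ lies in $\Psi^k_{\maW}(M_0;E)$ and is elliptic.

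The crux is therefore to verify the invariance $f^{-a}\Psi^k_{\maW}(M_0;E) f^a \subset \Psi^k_{\maW}(M_0;E)$, with preservation of the principal symbol. For differential operators, the computation is explicit: for $X \in \maW = f\maV$, write $X = fY$ with $Y \in \maV$; then
\begin{equation*}
    f^{-a} X f^a \;=\; X + a\, f^{-a} X(f^a) \;=\; X + a\, X(\log f) \;=\; X + a\, f\, Y(\log f).
\end{equation*}
Since $Y \in \maV$ is tangent to every hyperface, $Y(x_k) \in x_k \maC^\infty(M)$, and hence $Y(\log f) = \sum_k a_k Y(x_k)/x_k \in \maC^\infty(M)$. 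Thus $f^{-a}Xf^a - X$ is a smooth function on $M$, so $f^{-a}Xf^a \in \Diff^1_{\maW}(M_0)$ with the same principal symbol as $X$; iterating gives $f^{-a}\Diff^\ast_{\maW}f^a \subset \Diff^\ast_{\maW}$. The pseudodifferential version is obtained by passing to the groupoid $\maG$ integrating $A(\maW)$ (Theorem \ref{psiV as psiG}): $f$ pulls back to a smooth function on $\maG$ vanishing on $\maG_{\pa M}$ in a controlled way, and a direct inspection of the oscillatory integral \eqref{ker.psi} shows that the Schwartz kernels of elements of $\Psi^k(\maG)$ are stable under conjugation by $f^a$, with the same principal symbol. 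In particular, $\tilde Q$ is elliptic.

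Finally, standard parametrix construction in $\Psi^\ast_{\maW}$ (as in \cite{ALNpdo}) yields $R \in \Psi^{-k}_{\maW}(M_0;E)$ with $R\tilde Q = I - S$, $S \in \Psi^{-\infty}_{\maW}(M_0;E)$; since $R$ is bounded $H^{m-k}_{\maW} \to H^m_{\maW}$ and $S$ maps $H^s_{\maW}$ into $H^m_{\maW}$ (indeed into $\maC^\infty$), we obtain
\begin{equation*}
    u \;=\; R\tilde Q u + S u \;\in\; H^m_{\maW}(M_0;E),
\end{equation*}
and therefore $h = f^a u \in \maK^m_a(M_0;E)$, as desired. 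The main obstacle is the conjugation invariance of the pseudodifferential calculus under the weight $f^a$; the essential ingredient making this work is the hypothesis $\maW = f\maV$, which ensures that $X(\log f)$ is smooth on $M$ for every $X \in \maW$.
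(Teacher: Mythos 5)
Your argument is essentially correct, but it is worth noting that the paper does not prove this statement at all: Theorem \ref{thm.reg} is simply quoted from \cite{ALNpdo}, so your proposal supplies an argument where the authors defer to a reference. Your route — conjugate by $f^a$ to pass from $\maK^\ast_a = f^a H^\ast_{\maW}$ to the unweighted scale, then run the standard parametrix argument in $\Psi^\ast_{\maW}$ — is the standard one and is sound, granted two inputs from the $\maW$-calculus: boundedness of order-$k$ operators $H^s_{\maW}\to H^{s-k}_{\maW}$ for all real $s$, and the conjugation invariance $f^{-a}\Psi^k_{\maW}(M_0;E)f^{a}=\Psi^k_{\maW}(M_0;E)$ with unchanged principal symbol. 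The second input is exactly the point you call "the crux", and your differential-operator computation for it is correct (modulo a harmless typo: $f^{-a}Xf^{a}=X+f^{-a}X(f^{a})=X+a\,X(\log f)$, the extra factor $a$ in your first equality should not be there); note that since $X=fY$ with $Y\in\maV$ one even gets $X(\log f)\in f\,\maC^\infty(M)$, so the correction term vanishes at $\pa M$. However, for genuinely pseudodifferential $Q_0$ and non-integer $a$ this invariance is not a "direct inspection" of \eqref{ker.psi}: one must check that $(f\circ r / f\circ d)^{a}$ is smooth on a neighborhood of the units of $\maG$ (equivalently, that $\log\bigl(f(y)/f(x)\bigr)$ extends smoothly to the groupoid near the diagonal), which is a real, if known, lemma. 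This is precisely Proposition 4.3 of \cite{ALNpdo}, which the paper itself invokes a few lines later in the remarks before Lemma \ref{lemma.indep}; citing it there (or proving the smoothness statement on $\maG$) closes the only soft spot in your write-up. With that reference in place, the reduction $u=f^{-a}h$, $\tilde Q=f^{-a}Q_0f^{a}$, and the identity $u=R\tilde Qu+Su$ with $R\in\Psi^{-k}_{\maW}$, $S\in\Psi^{-\infty}_{\maW}$ give $u\in H^m_{\maW}$ and hence $h\in\maK^m_a$, as desired; so what your approach buys is a self-contained proof from the calculus, at the cost of relying on the same conjugation-invariance result from \cite{ALNpdo} that the paper already uses elsewhere.
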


Applying this result to $Q = f^{1/2} T f^{1/2}=\Dir_\maW + V_0$ we
obtain the following.

\begin{lem} \label{lemma.reg}
Let $h \in \maK_a^s(M_0; E)$ be such that
$Th \in \maK_{a-1}^{m-1}(M_0; E)$.  Then $h
\in \maK_a^m(M_0; E)$.
\end{lem}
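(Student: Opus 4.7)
The plan is to reduce the regularity statement for $T$ (an operator with unbounded potential, not directly in $\Psi_\maV^*$) to the elliptic regularity statement already available for the conjugated operator $Q = \Dir_\maW + V_0 \in \Psi_\maW^1(M_0; W \otimes E)$, which \emph{is} a standard elliptic element of the $\maW$-calculus. The algebraic identity $T = f^{-1/2} Q f^{-1/2}$ from equation \eqref{eq.def.Q} is exactly what makes this reduction possible, and Theorem \ref{thm.reg} (elliptic regularity in $\maW$-weighted Sobolev spaces) then finishes the argument.

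First I would record the basic fact that multiplication by $f^b$, $b \in \RR$, defines a topological isomorphism
\begin{equation*}
        f^b : \maK_a^m(M_0; E) \stackrel{\cong}{\longrightarrow} \maK_{a+b}^m(M_0; E),
\end{equation*}
which is immediate from the definition $\maK_a^m = f^a H_\maW^m$ (so no mapping properties of singular weights on $H_\maW^m$ itself are needed). Then, given $h \in \maK_a^s$ with $Th \in \maK_{a-1}^{m-1}$, I would set $g := f^{-1/2} h$, so that $g \in \maK_{a-1/2}^s$.

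Next I would use $T = f^{-1/2} Q f^{-1/2}$ to rewrite $Th = f^{-1/2} Q g$, which yields
\begin{equation*}
        Q g \;=\; f^{1/2}\, T h \;\in\; f^{1/2}\, \maK_{a-1}^{m-1} \;=\; \maK_{a-1/2}^{m-1}.
\end{equation*}
The weights of $g$ and of $Qg$ now agree (both equal $a-1/2$), so Theorem \ref{thm.reg} applies to the elliptic operator $Q \in \Psi_\maW^1(M_0; W \otimes E)$ and gives $g \in \maK_{a-1/2}^m$. Multiplying back by $f^{1/2}$ yields $h = f^{1/2} g \in \maK_a^m$, as required.

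The only genuine subtlety is justifying that $f^{1/2} \Dir f^{1/2}$ really lies in $\Diff_\maW^1(M_0; W)$ and is the Dirac operator for the rescaled structure, so that Theorem \ref{thm.reg} is applicable to $Q$; this is precisely the content of the discussion around $\maW = f\maV$ and $g_0 = f^{-2} g$ in the excerpt (following the references to \cite{Baer1, Baer2, Hitchin, LawsonBook, NistorDiracEquiv}), which I would simply cite. All other steps are bookkeeping with the weighted Sobolev scale, and no hard analysis beyond Theorem \ref{thm.reg} itself is needed.
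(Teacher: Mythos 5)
Your argument is correct and is essentially the paper's own proof: the paper simply states that Lemma \ref{lemma.reg} follows by applying Theorem \ref{thm.reg} to $Q = f^{1/2} T f^{1/2} = \Dir_\maW + V_0$, which is exactly the conjugation you carry out. You have merely made the weight bookkeeping explicit ($g = f^{-1/2}h \in \maK_{a-1/2}^s$, $Qg = f^{1/2}Th \in \maK_{a-1/2}^{m-1}$, hence $g \in \maK_{a-1/2}^m$ and $h \in \maK_a^m$), and your appeal to the fact that $\Dir_\maW \in \Diff^1_\maW$ is the same point the paper relies on.
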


We shall also need the following lemma. Before, we remark that it
follows from the definitions that multiplication by $f^s$ defines an
isomorphism $f^s: \maK_a^m(M_0; E_0) \to \maK_{a+s}^m(M_0;E_0)$, for
any $a, s$. {In particular, if $P\in \Psi_\maW^k(M_0)$, with $P:
H^m_\maW(M_0;E_0)\to H^{m-k}_\maW(M_0;E_0)$, then
$f^sPf^{-s}: \maK^m_s(M_0; E_0)\to \maK_s^{m-k}(M_0; E_0)$ is Fredholm
if, and only if, $P$ is. Moreover, it is known that
$f^s\Psi_\maW^k(M_0;E_0)f^{-s} = \Psi_\maW^k(M_0;E_0) $ (Proposition
4.3 \cite{ALNpdo}), so any such $P$ is also defined as an operator,
still denoted by $P$,
$$ P: \maK^m_s(M_0;E_0) \to \maK^{m-k}_s(M_0;E_0),$$ 
for any $s$.} 

\begin{lem} 
\label{lemma.indep}
Let $Q_0 \in \Psi_{\maW}^k(M_0; E_0)$, $k \in \ZZ_+$, be a fully
elliptic.  Then
\begin{equation*}
        Q_{a,b,c} := f^b Q_{{0}} f^{c} : \maK_a^m(M_0;
        E_0) \to \maK_{a+b+c}^{m-k}(M_0; E_0)
\end{equation*}
is Fredholm and its index is independent of $m$, $a$, $b$, and $c$, in
the sense that
\begin{equation*}
        \ind(Q_{a, b, c}) = \ind(Q_{0,0,0}).
\end{equation*}
\end{lem}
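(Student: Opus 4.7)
My plan is to reduce $Q_{a,b,c}$ to the case $a=b=c=0$ via two conjugations by powers of $f$, and then appeal to the asymptotically commutative structure of $\maW$ to control the resulting operator.

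The first step uses the isomorphism $f^s : \maK_a^m \to \maK_{a+s}^m$ recalled just before the lemma to factor
\begin{equation*}
Q_{a,b,c} = f^b \circ Q_0 \circ f^c \ :\ \maK_a^m \xrightarrow{f^c} \maK_{a+c}^m \xrightarrow{Q_0} \maK_{a+c}^{m-k} \xrightarrow{f^b} \maK_{a+b+c}^{m-k}.
\end{equation*}
Since the outer maps are isomorphisms, it suffices to prove that $Q_0 : \maK_s^m \to \maK_s^{m-k}$ is Fredholm with index independent of $m$ and $s$, where $s := a+c$. I would then conjugate once more by $f^s$ to pass to the unweighted spaces: this identifies $Q_0$ on $\maK_s^m$ with $\tilde Q_s := f^{-s} Q_0 f^{s}$ on $H_\maW^m$, and the conjugation property $f^{-s}\Psi^k_\maW f^s = \Psi^k_\maW$ (Prop.~4.3 of \cite{ALNpdo}) guarantees $\tilde Q_s \in \Psi_\maW^k$.

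Next I would check that $\tilde Q_s$ is fully elliptic. The principal symbol is preserved because conjugation by a smooth positive function alters a pseudodifferential operator only by terms of lower order. For the boundary symbol, this is where asymptotic commutativity of $\maW$ enters decisively: by Proposition \ref{comm indicial alg}, $\overline{\Psi^0(\maG_{\pa M})} \cong \maC(\overline{A}|_{\pa M})$ is a function algebra, and conjugation by $f^s$ restricts on each fiber of $A|_{\pa M}$ to an imaginary translation in the Fourier (momentum) variable arising from $-s\,d\log f$. Such a translation is a well-defined automorphism of the symbol algebra which leaves the invertibility locus unchanged, so $\tilde Q_s$ inherits full ellipticity from $Q_0$. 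Proposition \ref{prop.Fred.comm} then gives that $\tilde Q_s$ is Fredholm between $H_\maW^m$ and $H_\maW^{m-k}$.

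For the constancy of the index, I would connect $Q_0 = \tilde Q_0$ to $\tilde Q_s$ by the norm-continuous homotopy $t\mapsto \tilde Q_{ts}$, $t\in[0,1]$. By the previous paragraph, each $\tilde Q_{ts}$ is fully elliptic, and after order reduction $P_t := \tilde Q_{ts}(1+\tilde Q_{ts}^*\tilde Q_{ts})^{-1/2}$ as in Lemma \ref{lemma.order.red}, we obtain a norm-continuous path of Fredholm operators in $\overline{\Psi^0_\maW(M_0;E_0)}$ of constant index $\ind(\tilde Q_{ts}) = \ind(P_t)$. Homotopy invariance of the Fredholm index then yields $\ind(\tilde Q_s) = \ind(Q_0)$, hence $\ind(Q_{a,b,c}) = \ind(Q_{0,0,0})$. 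Independence from $m$ is standard elliptic regularity via Theorem \ref{thm.reg}, which implies that $\ker Q_0$ and $\ker Q_0^*$ computed in any Sobolev order consist of smooth sections and are the same. The analytical heart of the argument, and the step I expect to require the most care, is the boundary-symbol computation: showing that conjugation by $f^s$ genuinely corresponds to an invertibility-preserving shift in the commutative indicial algebra. This is essentially where the hypothesis that $\maW$ is commutative at infinity is unavoidable.
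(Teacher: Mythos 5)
Your reduction via the isomorphisms $f^s:\maK_a^m\to\maK_{a+s}^m$ and the conjugation property $f^{-s}\Psi_{\maW}^k f^{s}=\Psi_{\maW}^k$ is fine, and it parallels the paper's first and last steps. The gap is exactly in the step you single out as the analytical heart: the claim that conjugation by $f^s$ acts on the indicial symbol as an imaginary translation by $-s\,d\log f$ in each boundary fiber, and that such a translation preserves invertibility. Neither half of this is justified. A boundary symbol here is only a continuous function on $\overline{A}\vert_{\pa M}$ (Proposition \ref{comm indicial alg}); an imaginary shift of its argument is not even defined without analytic continuation, and where such shifts genuinely occur (e.g.\ the b-calculus, $\maV_b$ instead of $f\maV$) they do \emph{not} preserve invertibility --- the Fredholm range and the index there depend on the weight $s$. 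So if your mechanism were the true one, the conclusion of the lemma would typically be false, and the argument cannot be repaired by appealing to "invertibility-preserving shift."

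What actually saves the statement, and what the paper uses, is the specific form $\maW=f\maV$: for $X\in\maV$ tangent to all faces one has $X(f)\in f\,\maC^\infty(M)$, so the shift covector $s\,d\log f$, read as a section of $A_{\maW}^*$, \emph{vanishes} on $\pa M$; equivalently, for any $P\in\Psi_{\maW}^k(M_0;E_0)$,
\begin{equation*}
 f^s P f^{-s}-P \;=\; f^s\bigl(Pf^{-s}-f^{-s}P\bigr)\;\in\; f\,\Psi_{\maW}^{k-1}(M_0;E_0).
\end{equation*}
Hence the boundary symbol is literally unchanged (no translation at all), full ellipticity of $\tilde Q_s$ is immediate, and --- since multiplication by $f$ is compact between the relevant weighted Sobolev spaces by \cite{AIN} --- the difference $f^sQ_{a-s}f^{-s}-Q_a$ is compact on $\maK_a^m\to\maK_a^{m-k}$. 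This gives Fredholmness of $Q_a$ and $\ind(Q_a)=\ind(Q_0)$ directly, with no need for your homotopy $t\mapsto\tilde Q_{ts}$ (whose norm-continuity after order reduction you also did not establish). Your treatment of the $m$-independence via elliptic regularity (Theorem \ref{thm.reg}) is fine. To fix your write-up, replace the "imaginary translation" paragraph by the commutator estimate above and the compactness of $f$, and the rest of your outline goes through.
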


\begin{proof}
Let us notice first that $Q_{0,0,0}$ is Fredholm due to Proposition
\ref{prop.Fred.comm} (since we assumed $Q_0$ to be fully elliptic). 
It follows that $f^s Q_{0,0,0} f^{-s}: \maK^m_s(M_0; E_0)
\to \maK_s^{m-k}(M_0; E_0)$ is also Fredholm and $\ind(f^s Q_{0,0,0} 
f^{-s}) = \ind(Q_{0,0,0})$. Note also that $Q_{a,b,c}$ is indeed
well-defined, by the remarks above.  Write $Q_a=Q_{a,0,0}$.

Next, we notice that $f^s P f^{-s} - P = f^s(P f^{-s} - f^{-s}P) \in
f \Psi_{\maW}^{k-1}(M_0; E_0)$ for any $P\in
\Psi_{\maW}^k(M_0; E_0)$ {by the specific form of the Lie
algebra of vector fields $\maW = f\maV$. } Moreover $f^s
Q_{{a-s}} f^{-s} - Q_{{a}} :
\maK_a^m(M_0; E_0) \to \maK_a^{m-k}(M_0; E_0)$ is compact, 
since $f \maK_a^m(M_0; E_0) \to \maK_a^{m-k}(M_0; E_0)$ is compact
by \cite{AIN}. With $s=a$, we conclude that $Q_a$ is Fredholm and
also that $Q_a$ and $f^s Q_{a-s} f^{-s}$ have the same index for any
$s$.

If follows that the index of $Q_a : \maK_a^m(M_0; E_0) \to
\maK_a^{m-k}(M_0; E_0)$ is independent of $a$. Using this with $a$ replaced
by $a+c$ and using the fact that $f^s : \maK_a^m(M_0; E_0) \to
\maK_{a+s}^{m}(M_0; E_0)$ is an isomorphism, we obtain the desired result.
\end{proof}

We shall use this lemma to prove the following crucial result.

\begin{prop}\label{prop.domain}
The operators 
\begin{equation*}
        T \pm i I = \Dir + V \pm i I : \maK_1^1(M_0; W \otimes
        E) \to \maK_0^0(M_0; W \otimes E)= L^2(M_0; W \otimes E)
\end{equation*}
are invertible, and hence $T$ is essentially self-adjoint with domain
$\maK_1^1(M_0; W \otimes E)$, {where all the $L^2$ and Sobolev
spaces are associated to $\maW$. }
\end{prop}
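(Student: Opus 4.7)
The plan is to exploit the factorization
\begin{equation*}
T \pm iI = f^{-1/2}(Q \pm if) f^{-1/2},
\end{equation*}
where $Q := \Dir_\maW + V_0 = f^{1/2} \Dir f^{1/2} + V_0$ is the Callias-type operator on the asymptotically commutative Lie manifold $(M, \maW = f\maV)$ studied in Section~\ref{s.pot.bdd}. Since multiplication by $f^{\pm 1/2}$ is an isomorphism $\maK_a^m \to \maK_{a\pm 1/2}^m$ between $\maW$-weighted Sobolev spaces, the invertibility of $T \pm iI : \maK_1^1 \to \maK_0^0 = L^2$ is equivalent to the invertibility of $Q \pm if : \maK_{1/2}^1 \to \maK_{1/2}^0$. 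I will treat $Q + if$; the case $Q - if$ is identical up to sign.

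To show $Q + if$ is invertible, I verify that it is fully elliptic, Fredholm of index zero, and injective. Since $f \in \maC^\infty(M)$ and $f|_{\pa M} = 0$, both $\sigma_1(Q+if) = \sigma_1(Q)$ and $\sigma_\pa(Q+if) = \sigma_\pa(Q)$ are invertible by Lemma~\ref{lemma.fe}; hence $Q + if \in \Psi^1_\maW(M_0; W \otimes E)$ is fully elliptic, and Proposition~\ref{prop.Fred.comm} together with Lemma~\ref{lemma.indep} yields that $Q + if$ is Fredholm with weight-independent index. Homotoping along $Q_t := Q + t\,if$, $t \in [0,1]$, through operators with unchanged principal and boundary symbols gives $\ind(Q + if) = \ind(Q) = 0$, the latter because $Q$ is self-adjoint Fredholm by Lemma~\ref{lemma.sa}.

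For injectivity, suppose $v \in \maK_{1/2}^1$ satisfies $(Q+if)v = 0$, so that $Qv = -ifv$. Bootstrapping with the $\maW$-elliptic regularity of Theorem~\ref{thm.reg} (using boundedness of $f$ on $M$) places $v$ in $\maK_{1/2}^\infty$, and in particular in the $\maW$-Sobolev space $H^1_\maW$ coinciding with the domain of the self-adjoint extension of $Q$. Pairing against $v$ and taking the imaginary part yields
\begin{equation*}
0 = \mathrm{Im}\,\langle (Q+if)v, v\rangle = \mathrm{Im}\,\langle Qv, v\rangle + \int_{M_0} f\,|v|^2\, d\vol = \int_{M_0} f\,|v|^2\, d\vol,
\end{equation*}
since $\langle Qv, v\rangle \in \RR$ by self-adjointness. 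As $f > 0$ on $M_0$, we conclude $v = 0$. Combined with the Fredholm property and vanishing index, this gives invertibility of $Q + if$, and hence of $T + iI$; essential self-adjointness of $T$ on $\maK_1^1$ follows from the standard deficiency-indices criterion applied to the symmetric operator $T|_{\maC^\infty_c(M_0)}$, which is dense in $\maK_1^1$. The delicate step is this injectivity argument: one must use the $\maW$-elliptic regularity of Theorem~\ref{thm.reg} to boost the a priori regularity $v \in \maK_{1/2}^1$ up to the self-adjoint domain of $Q$, so that integration by parts in $\langle Qv, v\rangle$ is legitimate and this pairing is real.
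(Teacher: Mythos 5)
Your proof is correct and follows essentially the same strategy as the paper: the factorization $T\pm iI=f^{-1/2}(Q\pm if)f^{-1/2}$ with $Q=\Dir_{\maW}+V_0$ fully elliptic, the index-zero argument via weight-independence (Lemma \ref{lemma.indep}) and self-adjointness of $Q$ with the $if$-term being a harmless perturbation, and injectivity from the imaginary part of the quadratic form together with $f>0$ on $M_0$. The only differences are organizational: you remove the $if$-term by a homotopy preserving the full symbol where the paper invokes compactness of multiplication by $f$ between $\maK^{1/2}_a$ and $\maK^{-1/2}_a$, and you run the whole argument directly on $\maK_{1/2}^1\to\maK_{1/2}^0$ (so the final conjugation lands exactly on $\maK_1^1\to L^2$), whereas the paper establishes the isomorphism first on $\maK_{1/2}^{1/2}\to\maK_{-1/2}^{-1/2}$ and then upgrades by duality in the weight and elliptic regularity (Lemma \ref{lemma.reg}); both routes are sound.
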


\begin{proof}
Let us denote by 
\begin{equation*}
        Q_a := f^{1/2}Tf^{1/2} = \Dir_{\maW} + V_0
        : \maK_{a}^{1/2}(M_0; W \otimes E)\to \maK_{a}^{-1/2}(M_0;
        W \otimes E).
\end{equation*}
Then $Q_{{0}}$ is fully elliptic (by Lemma \ref{lemma.fe} and the
fact that $\Dir_{\maW}$ is the Dirac operator associated to $\maW =
f\maV$), and hence it is Fredholm. It follows from
Lemma \ref{lemma.indep} that $Q_a$ is Fredholm for any $a$ and that
its index is independent of $a$. Since $Q_0^* = Q_0$, we have
$\ind(Q_a) = 0$ for all $a$. Hence
\begin{equation*}
        \ind(Q_a + \lambda f) = \ind(Q_a)=0,
\end{equation*} 
since multiplication by $f$ is a compact operator $\maK_{a}^{1/2}(M_0;
W \otimes E) \to \maK_{a}^{-1/2}(M_0; W \otimes E)$
by \cite{AIN}. Then
\begin{equation}\label{eq.def.domain}
        T \pm i I = f^{-1/2}(Q_a \pm i f) f^{-1/2}
        : \maK_{1/2}^{1/2}(M_0; W \otimes
        E) \to \maK_{1/2}^{-1/2}(M_0; W \otimes E)
\end{equation}
is also Fredholm of index zero. 

We recall that $\maK^{m}_{a}(M_0; W \otimes E)$ is
the dual of $\maK^{-m}_{-a}(M_0; W \otimes E)$, with the duality
pairing being obtained from the $L^2$-inner product by continuous
extension. Then the ``$L^2$-estimate''
\begin{equation*}
        ( (T \pm i)u, u) = (Tu, u) \pm i(u, u) 
\end{equation*}
and $(Tu, u) \in \RR$ (since $T$ is symmetric between the indicated
spaces in Equation \eqref{eq.def.domain}) show that $T \pm i I$ are
injective for $a = 0$. Since they have index zero, they induce
isomorphisms
\begin{equation}\label{eq.isom.T}
        T \pm i I : \maK_{a + 1/2}^{m+1/2}(M_0; W \otimes E) \to \maK_{a
        -1/2}^{m - 1/2}(M_0; W \otimes E)
\end{equation}
for $a = 0$ and $m=0$. 

Now for an arbitrary $a$, the induced operator will still have index
zero (by Lemma \ref{lemma.indep}). Since for $a \ge 0$ it will still
be injective, it follows that it will be an isomorphism for all $a \ge
0$. Since for $a<0$ the resulting map is dual to the one for $-a$, we
obtain that $T \pm i I$ of Equation \eqref{eq.isom.T} are isomorphisms
for all $a$ and $m=0$. We can extend this isomorphism to any $m \ge$
by elliptic regularity (Lemma \ref{lemma.reg}) and this completes the
proof by taking $a = m = 1/2$.
\end{proof}

We shall extend $T$ to a self-adjoint operator denoted by the same
letter. We are ready now to compute the index of
\begin{equation*}
        T^+ = (\Dir + V)^+ : \maK_1^1(M_0; (W\hat\otimes
        E)^+) \to \maK^0_0(M_0; (W\hat\otimes E)^-),
\end{equation*}
where $\Dir$ is the Dirac operator on the (arbitrary) even dimensional
Lie manifold $(M_0, \maV)$ and $V=f^{-1}V_0$ is an unbounded potential
as in \eqref{pot.unbdd}. Let $\pi: \overline{TM} \to M$ and
$\pi_\Omega: \Omega = \pa{\overline{A_\maV}} \to M$ be the natural
projections and $Td(T_\CC M)$ be the Todd class of the complexified
tangent bundle of $M$.

\begin{thm}\label{thm.u.potential} 
The operator $T^+ = (\Dir + V)^+$ is Fredholm and 
its index is given by
\begin{eqnarray*}
	\ind(T^+) & =
	&\ch_0[\tilde{\sigma}_{full}(T^+)] \pi^*Td(T_\CC M) [TM_0]
	= \ch_1[\sigma_{full}(T^+)] \pi_\Omega^*Td(T_\CC M) [\Omega]\\
	& = & \ch_0[\sigma_1(\Dir^+)]\ch_0 {\pi^*} [V_0] \pi^*Td(T_\CC
	M) [TM_0].
\end{eqnarray*} 
\end{thm}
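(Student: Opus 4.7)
The strategy is to reduce to the bounded-potential case (Theorem \ref{thm.b.potential}) via the factorization
\begin{equation*}
T = f^{-1/2} Q f^{-1/2}, \qquad Q = f^{1/2} \Dir f^{1/2} + V_0 = \Dir_\maW + V_0,
\end{equation*}
where $(M,\maW)$, $\maW = f\maV$, is an asymptotically commutative Lie manifold on which $\Dir_\maW$ is the Dirac operator for the rescaled metric $g_0 = f^{-2}g$, and $V_0$ is a bounded potential invertible on $\pa M$. Thus $Q \in \Diff^1_{\maW}(M_0; W \otimes E)$ is exactly a Callias-type Dirac operator on $(M,\maW)$ of the form \eqref{bdd pot}.

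First I would settle Fredholmness and the index identity $\ind(T^+) = \ind(Q^+)$. By Proposition \ref{prop.domain} the closure of $T$ is self-adjoint with domain $\maK_1^1(M_0; W\otimes E)$, and $T \pm iI$ are isomorphisms onto $L^2$. Since $Q_0 := \Dir_\maW + V_0$ is fully elliptic by Lemma \ref{lemma.fe}, Lemma \ref{lemma.indep} applies: the operators $f^{b} Q_0 f^{c}$ are all Fredholm with the same index between the corresponding weighted $\maW$-Sobolev spaces. In particular, writing
\begin{equation*}
T^+ = f^{-1/2} Q^+ f^{-1/2} : \maK_1^1(M_0;(W\hat\otimes E)^+) \to \maK_0^0(M_0;(W\hat\otimes E)^-),
\end{equation*}
and using that $f^{\pm 1/2}$ intertwines $\maK_a^m$ with $\maK_{a \pm 1/2}^m$, the Fredholmness and the equality $\ind(T^+) = \ind(Q^+) = \ind(Q_{0,0,0}^+)$ follow.

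Next I would apply Theorem \ref{thm.b.potential} to $Q^+$ on the asymptotically commutative Lie manifold $(M,\maW)$, yielding
\begin{equation*}
\ind(Q^+) = \ch_0[\tilde{\sigma}_{full}(Q^+)] \pi^* Td(T_\CC M)[TM_0] = \ch_0[\sigma_1(\Dir_\maW^+)] \ch_0 \pi^*[V_0]\, \pi^* Td(T_\CC M)[TM_0],
\end{equation*}
with the second equality coming from Lemma \ref{lemma.symb.Q}. The last step is to identify these symbol classes with those of $T^+$. Over the interior $M_0$ one has $A_\maW\vert_{M_0} = A_\maV\vert_{M_0} = TM_0$, and the principal symbols $\sigma_1(\Dir_\maW^+)$ and $\sigma_1(\Dir^+)$ are both given by Clifford multiplication on the same bundle $W^+ \to W^-$, hence define the same class in $K^0(TM_0)$. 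By definition the full symbol class $[\tilde{\sigma}_{full}(T^+)]$ extends that principal symbol by an invertible endomorphism outside a compact set, obtained from $V = f^{-1} V_0$ in the same manner as for $Q$; because the relevant $K$-theory class depends only on the principal symbol on $S^*M_0$ together with an invertibility datum at infinity, one checks $[\tilde{\sigma}_{full}(T^+)] = [\tilde{\sigma}_{full}(Q^+)] = [\sigma_1(\Dir^+)] \otimes \pi^*[V_0]$ in $K^0(TM_0)$, with the corresponding equality $[\sigma_{full}(T^+)] = [\sigma_{full}(Q^+)]$ in $K^1(\Omega)$.

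The main obstacle will be the bookkeeping in step one: justifying that the unbounded self-adjoint operator $T^+$ on its natural domain $\maK_1^1$ really has the same index as the bounded Fredholm operator $Q^+$ between $\maW$-Sobolev spaces, given that the identification uses conjugation by the unbounded multiplier $f^{-1/2}$ and the invariance statement of Lemma \ref{lemma.indep}. Once that is in place, the identification of $K$-theory classes is routine since both problems reduce to the same principal symbol over $TM_0$ twisted by the potential $V_0$.
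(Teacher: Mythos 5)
Your proposal is correct and follows essentially the same route as the paper: factor $T = f^{-1/2}(\Dir_{\maW}+V_0)f^{-1/2}$ over the asymptotically commutative structure $\maW = f\maV$, use Proposition \ref{prop.domain}, Lemma \ref{lemma.fe} and Lemma \ref{lemma.indep} to get Fredholmness and $\ind(T^+)=\ind(Q^+)$, and then conclude via Theorem \ref{thm.b.potential}, Lemma \ref{lemma.symb.Q} and the homotopy identification of the symbol classes of $T^+$, $Q^+$, $\Dir$ and $\Dir_{\maW}$. The only (inessential) difference is that the paper handles the unbounded-versus-weighted-Sobolev bookkeeping by passing through $Q_1 = T^+f$ and its bounded transform $Q_1(1+Q_1^*Q_1)^{-1/2}$, whereas you invoke Lemma \ref{lemma.indep} directly with $b=c=-1/2$.
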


\begin{proof}
Let  $Q_1 = T^+f$, where $f = \prod x_k^{a_k}$ as above and is
regarded as a multiplication operator. Then $Q_1 =
f^{-1/2}Q^+f^{1/2}$, where
\begin{equation*}
      Q^+ := f^{1/2} T^{+} f^{1/2} = (\Dir_{\maW} + V_0)^+
      : \maK_{0}^1(M_0; (W\hat\otimes E)^+) \to \maK_0^0(M_0; (W\hat\otimes
      E)^-)
\end{equation*} 
is fully elliptic (by Theorem \ref{thm.b.potential} and the fact that
$\Dir_{\maW}$ is the Dirac operator associated to $\maW := f\maV$).
Then the operators $Q_1$ and $Q^+$ have the same index, by
Lemma \ref{lemma.indep}. By ellipticity,
\begin{equation*}
        (1 + Q_1^*Q_1)^{1/2} : \maK_0^1(M_0; (W\hat\otimes E)^+) \to
        L^2(M_0; (W\hat\otimes E)^+)
\end{equation*}
is an isomorphism since the domain of any elliptic operator $P \in 
\Psi^m_{\maW}(M; E)$ is $H^m_\maW(M_0, E)$. Therefore $f(1 + Q_1^*Q_1)^{-1/2} 
: L^2(M_0; (W\hat\otimes E)^+) \to \maK_1^1(M_0; (W\hat\otimes E)^+)$
is an isomorphism as well.  Proposition \ref{prop.domain} then yields
that $T^+ : \maK_{1}^1(M_0) \to L^2(M_0)$ has the same index as
\begin{equation*}
        T^+ f(1 + Q_1^*Q_1)^{-1/2} = Q_1(1 + Q_1^*Q_1)^{-1/2}
        : L^2(M_0; (W\hat\otimes E)^+) \to L^2(M_0; (W\hat\otimes E)^-),
\end{equation*}
and, in particular, they are both Fredholm.

We have thus obtained that the operator $T^+ = (\Dir + V)^+$ is
Fredholm and has the same index as $Q^+ := f^{1/2} T^{+}
f^{1/2}$. Moreover, the principal symbols of $T^+$ and $Q^+$ define
the same $K$-theory classes, by homotopy invariance, as do the symbols
of $\Dir$ and $\Dir_\maW$, and hence
\begin{multline*}
	\ind(T^+) = \ind(Q^+) = \ch_0[\sigma(Q^+)] \pi^*Td(T_\CC M)
        [TM_0] = \ch_1[\sigma(Q^+)] \pi_\Omega^*Td(T_\CC M) [\Omega] \\
        = \ch_0[\sigma(T^+)] \pi^*Td(T_\CC M) [TM_0]
	= \ch_1[\sigma(T^+)] \pi_\Omega^*Td(T_\CC M) [\Omega]\\
	=\ch_0[\sigma(\Dir^+)]\ch_0 {\pi^*} [V_0] \pi^*Td(T_\CC M)
	[TM_0],
\end{multline*} 
by Theorem \ref{thm.b.potential} applied to $Q^+$ and homotopy
invariance.
\end{proof}

We also obtain the following more explicit calculation similar
to Corollary \ref{cor.tw.dirac}.

\begin{cor}\label{cor.tw.dirac2} 
Let $\Dir_F$ be the Dirac operator
twisted with $F$ and $T = \Dir_F + V$ be the perturbed twisted Dirac
operator associated to $V = f^{-1}V_0$, where $V_0$ is a bounded
potential on $M$ invertible at $\pa M$ for a spin$^c$ Lie manifold
$(M, \maV)$. Then $T^+$ is Fredholm and, using the notation
of Corollary \ref{cor.tw.dirac2}
\begin{equation*}	
      \ind(T^+) = \hat A(M) \ch_0 ([F \otimes V_0] ) [M] = \hat
      A(M) \ch_0 ([F \otimes V] ) [M].
\end{equation*}
\end{cor}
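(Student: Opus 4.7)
The plan is to combine Theorem \ref{thm.u.potential} with the classical spin$^c$ calculation that was already invoked in the proof of Corollary \ref{cor.tw.dirac}. Applied to $T = \Dir_F + V$, Theorem \ref{thm.u.potential} gives
\begin{equation*}
        \ind(T^+) = \ch_0[\sigma_1(\Dir_F^+)]\ \ch_0 \pi^*[V_0]\ \pi^* Td(T_\CC M)\ [TM_0],
\end{equation*}
where $\pi:\overline{TM}\to M$ and $[V_0]\in K^0(M,\pa M)$ is defined using that $V_0$ is invertible on $\pa M$. Since $\ch_0\pi^*[V_0]\,\pi^*Td(T_\CC M)$ is pulled back from $M$, and the fundamental class $[TM_0]$ is compatible with integration along the fibres $p_!:H^*_c(TM_0)\to H^*(M,\pa M)$ (pairing then being with $[M]\in H_n(M,\pa M)$), I would rewrite the right-hand side as
\begin{equation*}
        \bigl(p_!\ch_0[\sigma_1(\Dir_F^+)]\,Td(T_\CC M)\bigr)\ \ch_0[V_0]\ [M].
\end{equation*}

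Next I would invoke the classical spin$^c$ identity already used in Corollary \ref{cor.tw.dirac}, namely
\begin{equation*}
        p_!\ch_0(\sigma(\Dir_F^+))\,Td(T_\CC M) = \hat A(M)\,\ch_0[F],
\end{equation*}
so that
\begin{equation*}
        \ind(T^+) = \hat A(M)\,\ch_0[F]\,\ch_0[V_0]\,[M] = \hat A(M)\,\ch_0[F\otimes V_0]\,[M],
\end{equation*}
using the multiplicativity of the Chern character on the K-theoretic exterior product.

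For the remaining equality $\ch_0[F\otimes V_0]=\ch_0[F\otimes V]$ in $K^0(M,\pa M)$, I would observe that $V = f^{-1}V_0$ on $M_0$ with $f = \prod x_k^{a_k}>0$ there. Then the straight-line homotopy $V_t := \bigl((1-t) + tf\bigr)V = \bigl((1-t)f^{-1}+t\bigr)V_0$, $t\in[0,1]$, is through odd self-adjoint endomorphisms that are invertible wherever $V_0$ is (in particular near $\pa M$), and joins $V_0$ ($t=1$) to $V$ ($t=0$). Hence $[V] = [V_0]$ in $K^0(M_0)=K^0(M,\pa M)$, and tensoring with $[F]$ gives the stated equality.

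The only non-routine point is the compatibility between the cohomological integration-over-the-fibre for the radial compactification $\overline{TM}$ and the pairing with $[TM_0]$, which is what makes the passage from $\pi^*Td(T_\CC M)$ on $\overline{TM}$ to $Td(T_\CC M)$ on $M$ legitimate; this is standard (cf.\ \cite{LawsonBook}) and is already implicit in Corollary \ref{cor.tw.dirac}. Everything else is a direct rewriting plus a K-theoretic homotopy, so I expect no real obstacle.
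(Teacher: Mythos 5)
Your proposal is correct and follows essentially the route the paper intends: the paper states Corollary \ref{cor.tw.dirac2} without a separate proof, relying on Theorem \ref{thm.u.potential} together with the same classical spin$^c$ computation $p_!\ch_0(\sigma(\Dir_F^+))\,Td(T_\CC M)=\hat A(M)\ch_0[F]$ already used for Corollary \ref{cor.tw.dirac}. Your explicit homotopy $V_t=\bigl((1-t)f^{-1}+t\bigr)V_0$ showing $[F\otimes V]=[F\otimes V_0]$ in $K^0(M,\pa M)$ just fills in a detail the paper leaves implicit, so no further comment is needed.
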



\bibliographystyle{plain}
\bibliography{dirpotref}

\end{document}